\def\Xint#1{\mathchoice
{\XXint\displaystyle\textstyle{#1}}%
{\XXint\textstyle\scriptstyle{#1}}%
{\XXint\scriptstyle\scriptscriptstyle{#1}}%
{\XXint\scriptscriptstyle\scriptscriptstyle{#1}}%
\!\int}
\def\XXint#1#2#3{{\setbox0=\hbox{$#1{#2#3}{\int}$}
\vcenter{\hbox{$#2#3$}}\kern-.5\wd0}}
\def\avgint{\Xint-}
\newtheorem{theorem}{Theorem}[section]
\newtheorem{lemma}[theorem]{Lemma}
\newtheorem{prop}[theorem]{Proposition}
\newtheorem{corollary}[theorem]{Corollary}
\newtheorem{definition}[theorem]{Definition}
\newtheorem{example}[theorem]{Example}
\theoremstyle{definition}
\theoremstyle{remark}
\newtheorem{remark}[theorem]{Remark}
\numberwithin{equation}{section}
\newcommand{\R}{\mathbb R}
\newcommand{\BB}{\mathbf B}
\newcommand{\Q}{\mathcal Q}
\newcommand{\D}{\mathcal D}
\newcommand{\Ss}{\mathcal S}
\newcommand{\F}{\mathcal F}
\newcommand{\W}{\mathcal{W}}
\newcommand{\A}{\mathcal A}
\newcommand{\B}{\mathcal B}
\newcommand{\Zz}{\mathcal Z}
\DeclareMathOperator*{\esssup}{ess\,sup}
\DeclareMathOperator{\sgn}{sgn}
\DeclareMathOperator{\op}{{op}}
\DeclareMathOperator{\diag}{diag}
\DeclareMathOperator{\conv}{conv}
\DeclareMathOperator{\clconv}{\overline{conv}}
\DeclareMathOperator{\cl}{cl}
\newcommand{\K}{\mathcal{K}}
\newcommand{\cs}{\K_{cs}(\R^d)}
\newcommand{\bcs}{\K_{bcs}(\R^d)}
\newcommand{\Rdf}{\mathcal{R}}
\newcommand{\wM}{\widetilde{M}}
\newcommand{\Md}{\mathcal{M}_d}
\newcommand{\Sd}{\mathcal{S}_d}
\newcommand{\loc}{\text{loc}}
\title{Off-diagonal matrix extrapolation for Muckenhoupt bases}
\author{David Cruz-Uribe OFS}
\address{Department of Mathematics \\ University of Alabama \\
Tuscaloosa, AL 35487, USA}
\email{dcruzuribe@ua.edu}
\author{Fatih \c{S}irin}
\address{Department of Mathematics \\ Hali\c{c} University \\ 5. Levent Campus, Istanbul, 34060, T\"urkiye}
\email{fatihsirin@halic.edu.tr}
\thanks{The first author is partially supported by a Simons Foundation Travel Support for Mathematicians Grant and by NSF grant DMS-2349550.  The second author is supported by 2219 - International Postdoctoral Research Fellowship Program for Turkish Citizens from T\"ubitak, the Scientific and Technological Research Council of T\"urkiye.}
\subjclass[2010]{42B25, 42B30, 42B35}
\keywords{convex-set valued functions,
  maximal operators, off-diagonal inequalities,
  Muckenhoupt weights, matrix weights,
  Rubio de Francia extrapolation}
\begin{document}

\begin{abstract}
In this paper we extend the theory of Rubio de Francia extrapolation for matrix weights, recently introduced by Bownik and the first author~\cite{bownik-cruz-uribe}, to off-diagonal extrapolation.  We also show that the theory of matrix weighted extrapolation can be extended to matrix $\A_p$ classes defined with respect to a general basis, provided that a version of the Christ-Goldberg maximal operator is assumed to be bounded.  Finally, we extend a recent result by Vuorinen~\cite{VUORINEN2024109847} and show that all of the multiparameter bases have this property.
\end{abstract}

\maketitle

\section{Introduction}

In this paper we further develop the theory of Rubio de Francia extrapolation for matrix weights.  To put our results in context, we briefly sketch the theory of matrix weights and extrapolation.  For a more complete history, see~\cite{bownik-cruz-uribe,cruzuribe2024} and the references they contain.  A matrix weight $W$ is a $d\times d$ self-adjoint matrix function whose coefficients are measurable functions that are finite almost everywhere, and such that $W(x)$ is invertible for almost every $x$. For brevity we will say that $W$ is finite and invertible almost everywhere.   For $1\leq p<\infty$ we define the space $L^p(\R^n, W)$ to be the Banach space of measurable, vector-valued functions $f :\R^n \rightarrow \R^d $  with the norm
\[ \|f\|_{L^p(\R^n,W)} = \|Wf\|_{L^p(\R^n,\R^d)} = \bigg(\int_{\R^n} |W(x)f(x)|^p\,dx \bigg)^{\frac{1}{p}} < \infty.  \]
Traditionally, this norm is defined with $W$ replaced by $W^{\frac{1}{p}}$, so that when $d=1$ (i.e., in the scalar case) the integral becomes
\[ \int_{\R^n} |f(x)|^p w(x)\,dx.  \]
However, for the theory of extrapolation it is more convenient to use the above formulation; in particular, the first equality still make sense for $p=\infty$, and this definition allows a unified treatment of endpoint cases.  Moreover, this formulation is more natural for off-diagonal inequalities, even in the scalar case.

Matrix weighted norm inequalities for singular integrals have been considered by a number of authors, beginning with Nazarov, Treil and Volberg~\cite{MR1428988,MR1428818,Vol}.  (See~\cite{cruzuribe2024} for additional references.)  This led to the development of the theory of matrix $A_p$ weights, generalizing the classical scalar theory of Muckenhoupt $A_p$ weights.  In~\cite{bownik-cruz-uribe}, Bownik and the first author resolved a long open problem in the theory by extending the theory of Rubio de Francia extrapolation to the setting of matrix weights.  To state their result, we define the relevant weight classes.
For $1<p<\infty$, we say a matrix weight $W$ is in $\A_p$ if
\[ [W]_{\A_p} = \sup_Q \bigg(\avgint_Q \bigg( \avgint_Q
    |W(x)W^{-1}(y)|_{\op}^{p'}\,dy\bigg)^{\frac{p}{p'}}\,dx\bigg)^{\frac{1}{p}} <
    \infty. \]
  Here, the supremum is taken over all cubes with sides parallel to the coordinate axes.
  We also define the classes $\A_1$ and $\A_\infty$ by
  \[ [W]_{\A_1} = \sup_Q \esssup_{x\in Q} \avgint_Q
    |W^{-1}(x)W(y)|_{\op}\,dy < \infty,  \]
and
\[ [W]_{\A_\infty} =\sup_Q \esssup_{x\in Q} \avgint_Q
  |W(x)W^{-1}(y)|_{\op}\,dy < \infty.  \]

\begin{theorem} \label{thm:matrix-extrapolation-intro} Let $T$ be a
  sublinear operator.  Suppose that for some $p_0$,
  $1 \leq p_0\leq\infty$, there exists an increasing function
  $N_{p_0}$ such that for every $W_0\in \A_{p_0}$,
\begin{equation}\label{ext1}
 \|Tf\|_{L^{p_0}(\R^n, W_0)}
  \leq N_{p_0}([W_0]_{\A_{p_0}})\|f\|_{L^{p_0}(\R^n, W_0)}. 
  \end{equation}
  Then for all $p$, $1<p<\infty$, and 
  for all $W\in \A_p$,
\begin{equation}\label{ext2}
    \|Tf\|_{L^p(\R^n,W)}
  \leq N_{p}(p,p_0,n,d,[W]_{\A_{p}})\|f\|_{L^{p}(\R^n,W)},
  \end{equation}
  where
  \[ N_p (p,p_0,n,d,[W]_{\A_{p}})
    = C(p,p_0)N_{p_0}\bigg(C(n,d,p,p_0)[W]_{\A_p}^{\max\big\{\frac{p}{p_0},\frac{p'}{p_0'}\big\}}\bigg). \]
\end{theorem}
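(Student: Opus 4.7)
The plan is to adapt the classical Rubio de Francia extrapolation argument to the matrix setting. The proof splits into the two cases $p > p_0$ and $p < p_0$ (the case $p = p_0$ being trivial). These are dual to each other via $L^p(\R^n,W)^{*} = L^{p'}(\R^n,W^{-1})$ together with the relation between $[W]_{\A_p}$ and $[W^{-1}]_{\A_{p'}}$, so it suffices to treat one case, say $p > p_0$. The other then follows by a parallel argument, and this duality is exactly what produces the exponent $\max\{p/p_0, p'/p_0'\}$ in the final bound.

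For the case $p > p_0$, I would start from the standard identity
\[
\|Tf\|_{L^p(\R^n,W)}^{p_0}
= \sup_{h} \int_{\R^n} |W(x)Tf(x)|^{p_0}\, h(x)\,dx,
\]
where the supremum runs over nonnegative $h \in L^{(p/p_0)'}(\R^n)$ of unit norm. The key is then to construct, from $W$ and $h$, an auxiliary matrix weight $W_0$ such that (i) $W_0 \in \A_{p_0}$ with $[W_0]_{\A_{p_0}}$ controlled by a power of $[W]_{\A_p}$, (ii) the integral above is dominated by $\|Tf\|_{L^{p_0}(\R^n,W_0)}^{p_0}$, and (iii) $\|f\|_{L^{p_0}(\R^n,W_0)} \lesssim \|f\|_{L^p(\R^n,W)}$. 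Once such a $W_0$ is produced, applying the hypothesis~\eqref{ext1} to $W_0$ together with (ii) and (iii) yields~\eqref{ext2} with the stated constants.

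The main obstacle is step (i). In the scalar case, one builds the auxiliary weight by combining the Rubio de Francia iteration of the Hardy--Littlewood maximal operator (which produces $A_1$ weights from arbitrary data with controlled norms) with the Jones factorization $u = u_1 u_2^{1-p}$ for $A_p$ weights. Neither tool has a direct matrix analogue, since matrices do not commute and no such factorization of matrix $\A_p$ weights is known in general. Following the approach of~\cite{bownik-cruz-uribe}, the remedy is to work not with matrix weights themselves but with the associated \emph{convex-set valued functions} $x \mapsto W(x)^{-1}\BB_d$, and to run a Rubio de Francia iteration for a matrix-valued, Christ--Goldberg type maximal operator acting on such set-valued functions. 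The boundedness of this maximal operator on the relevant $L^p$-type space is the essential analytic input; once it is available, the iteration produces a convex-set $\A_1$ function whose reducing operator (or John ellipsoid) supplies the desired matrix weight $W_0$, and a careful tracking of constants through the iteration yields the quantitative bound stated in the theorem. The endpoint cases $p_0 = 1$ and $p_0 = \infty$ are handled analogously, with the iteration replaced by direct appeals to the $\A_1$ and $\A_\infty$ conditions.
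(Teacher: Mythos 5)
Your overall architecture --- duality on $L^{p/p_0}$, an auxiliary weight $W_0$ built by a Rubio de Francia iteration of a convex-set-valued maximal operator, and an appeal to the hypothesis at the exponent $p_0$ --- is the right family of ideas. But there is a genuine gap at your step (i), and it sits exactly where the sketch waves its hands. With a single iteration per case, the algorithm hands you only \emph{one} $\A_1$-type ingredient (an ellipsoid-valued function $V$ with $M_\B V\subset CV$). To conclude that $W_0\in\A_{p_0}$ you must then combine $V$ with $W$ itself, and in the scalar case this is precisely where the Jones factorization of $w\in A_p$ enters --- the tool you correctly note has no matrix analogue. The only factorization result available in the matrix setting is the \emph{reverse} one (Proposition~\ref{prop_8.7}): $V_1^{1/p_0}V_2^{1/p_0'}\in\A_{p_0}$ when $V_1\in\A_1$ and $V_2\in\A_\infty$ commute. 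To invoke it, \emph{both} factors must be constructed by iteration rather than extracted from $W$, since no fixed power of a matrix $\A_p$ weight is known to lie in $\A_1$ or $\A_\infty$ with controlled constant (powers of matrix weights behave badly; compare the failure of the reverse Cordes inequality in Example~\ref{example:no-reverse-cordes}). As written, your plan does not explain how the single iterated function yields $W_0\in\A_{p_0}$, and the natural attempts reproduce the missing Jones factorization.

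The paper's proof (the $s=1$ case of Theorem~\ref{thm:off-diag-extrapol-pairs}) avoids the case split entirely and runs \emph{two} iteration algorithms simultaneously: $\mathcal{R}_W$, built from $N_WM_\B$ on $L^p_\K(\R^n,W)$ and applied to the ellipsoid-valued function $h_1W^{-1}\overline{\mathbf B}$ manufactured from both $f$ and $g$; and $\mathcal{R}'_I$, built from the conjugated operator $W^{-1}M_\B(W\,\cdot)$ on the unweighted space $L^{p'}_\K(\R^n)$ and applied to $h_2\overline{\mathbf B}$, where $h_2$ comes from duality via $\|f\|_{L^p(\R^n,W)}=\int|W f|h_2\,dx$. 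The outputs give $V_1=\overline{h}_2W\in\A_1$ and $V_2=\overline{h}_1^{-1}W\in\A_\infty$ (because $V_2^{-1}\in\A_1$), and $W_0=V_1^{1/p_0}V_2^{1/p_0'}$ lies in $\A_{p_0}$ by reverse factorization alone; H\"older's inequality with exponents $p_0,p_0'$ then replaces your dichotomy $p>p_0$ versus $p<p_0$. (Two further cautions: since $T$ is only sublinear, the case $p<p_0$ cannot literally be reduced to $p>p_0$ by dualizing $T$, so your ``parallel argument'' would have to be carried out in full; and the two-iteration scheme as executed in Section~\ref{section:main-thm} yields the exponent $p/p_0+p'/p_0'$ rather than the sharp $\max\{p/p_0,p'/p_0'\}$ claimed in the statement --- recovering the latter requires the tuned exponents of~\cite{bownik-cruz-uribe}, not a different skeleton.)
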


The goal of this paper is to generalize Theorem~\ref{thm:matrix-extrapolation-intro} in two ways.  First, we prove a matrix version of off-diagonal extrapolation.  In the scalar case this was first proved by Harboure, Mac\'\i as and Segovia~\cite{harboure-macias-segovia88} for the class of  $A_{p,q}$ weights introduced by Muckenhoupt and Wheeden~\cite{muckenhoupt-wheeden74}.  This class was generalized to the matrix setting by Isralowitz and Moen~\cite{MR4030471} who used it to prove matrix weighted inequalities for the fractional integral operator.  Given $p,\,q$,  $1<p\le q<\infty$, we say that $W\in \A_{p,q}$ if
\[
[W]_{A_{p,q}} = \sup_{Q} \left( \avgint_Q \left( \avgint_{Q} |W(x) W^{-1}(y)|^{p'}_{\op} \,dy \right)^{\frac{q}{p'}} \,dx \right)^{\frac{1}{q}}  < \infty.
\]
When $p=1$ and $q<\infty$, 
we say that  $W \in \A_{1,q}$ if
\[
[W]_{\A_{1, q}} = \sup_{Q} \esssup_{x \in Q} \left( \avgint_Q  |W^{-1}(x) W(y)|_{\op}^q \,dy \right)^{\frac1q} < \infty.
\]
Finally, when $p>1$ and $q=\infty$ we say that $W\in \A_{p,\infty}$ if
\[
[W]_{\A_{p, \infty}} = \sup_{Q} \esssup_{x \in Q} \left( \avgint_Q  |W(x) W^{-1}(y)|_{\op}^{p'} \,dy \right)^{\frac{1}{p'}} < \infty.
\]
Note that if $p=q$, these reduce to the $\A_p$ classes defined above.

In the scalar case, the proof of off-diagonal extrapolation relies heavily on the fact that $w\in \A_{p,q}$ if and only if $w^s\in A_r$, where $\frac{1}{p} - \frac{1}{q} =\frac{1}{s'}$  and $r=\frac{q}{s}$.  (See Section~\ref{section:muckenhoupt-basis} below.)    However, in the matrix case the  assumption that $W\in \A_{p,q}$  is strictly weaker than the  assumption that $W^s\in \A_r$;  see Example~\ref{example:no-reverse-cordes} below.    And it is this stronger condition that we need  to prove extrapolation.  

\begin{theorem} \label{thm:off-diag-extrapol-cubes}
Let $T$ be a sublinear operator.   Fix $s$, $1\leq s<\infty$.  Suppose that for some $ p_0, q_0, 1 \leq p_0 \leq q_0 \leq \infty $, where $\frac{1}{p_0}-
\frac{1}{q_0}=\frac{1}{s'}$ and $r_0=\frac{q_0}{s}$, there exists a positive increasing function $ N_{p_0,q_0} $ such that for every matrix weight $W_0$ with $ W_0^s \in \A_{r_0} $,
\begin{equation} \label{eqn:odec1}
    \left( \int_{\R^n} \left| W_0(x) Tf(x) \right|^{q_0} \,dx \right)^{\frac{1}{q_0}} 
\leq N_{p_0, q_0} \left( [W_0^s]_{\A_{r_0}}\right)
\left( \int_{\R^n} \left| W_0(x) f(x) \right|^{p_0} \,dx \right)^{\frac{1}{p_0}}.
\end{equation}
Then for all $ p,\, q $ such that $ 1 < p \leq q < \infty $, $ \frac{1}{p} - \frac{1}{q} = \frac{1}{p_0} - \frac{1}{q_0}$, $r=\frac{q}{s}$, and for all matrix weights $W$ with $ W^s \in \A_{r} $,
\begin{equation} \label{eqn:odec2}
\left( \int_{\R^n} \left| W(x) Tf(x) \right|^q \,dx \right)^{\frac{1}{q}} 
\leq N_{p, q}(p_0,q_0,p,q,[W^s]_{\A_{r}})
\left( \int_{\R^n} \left| W(x) f(x) \right|^p \,dx \right)^{\frac{1}{p}},
\end{equation}
where 
\[
N_{p, q}(p_0,q_0,p,q,[W^s]_{\A_{r}}) = C(d,p_0,q_0,p,q)
N_{p_0, q_0}\big(C(d,p,q,p_0, q_0) [W^s]_{\A_{r}}^{\frac{r}{r_0} + \frac{r'}{r'_0}}\big).
\]
\end{theorem}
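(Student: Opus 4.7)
The plan is to lift the scalar off-diagonal extrapolation argument of Harboure, Mac\'\i as, and Segovia~\cite{harboure-macias-segovia88} to the matrix setting, performing the Rubio de Francia iteration within the convex-body-valued framework of~\cite{bownik-cruz-uribe}. Fix $1 < p \leq q < \infty$ with $1/p - 1/q = 1/s'$ and $W$ with $W^s \in \A_r$, $r = q/s$; we may assume $q > q_0$, as the case $q < q_0$ is handled by a parallel iteration on the other side. Since $q/q_0 > 1$, duality in $L^{q/q_0}(\R^n)$ linearizes the $L^q$ norm on the left:
\[
\|W Tf\|_{L^q(\R^n)}^{q_0} = \sup_{h \geq 0,\,\|h\|_{(q/q_0)'} = 1} \int_{\R^n} |W(x)Tf(x)|^{q_0}\, h(x)\,dx.
\]

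For each such $h$, the strategy is to construct a matrix weight $W_0$ for which the hypothesis~\eqref{eqn:odec1} yields the desired bound. The natural ansatz is $W_0(x) = (Rh(x))^{1/q_0}\, W(x)$, where $Rh \geq h$ is a scalar Rubio de Francia iterate with $\|Rh\|_{(q/q_0)'} \leq 2\|h\|_{(q/q_0)'}$, constructed from a suitable maximal operator whose boundedness is driven by $W^s \in \A_r$. Because $Rh$ is scalar and $W$ is self-adjoint, $W_0$ is a positive matrix weight with $W_0^s = (Rh)^{s/q_0} W^s$ and
\[
|W_0 Tf|^{q_0} = Rh\,|W Tf|^{q_0},\qquad |W_0 f|^{p_0} = (Rh)^{p_0/q_0}|Wf|^{p_0}.
\]
Combining $Rh \geq h$ with~\eqref{eqn:odec1} applied to $W_0$ gives
\[
\int|WTf|^{q_0}h \leq N_{p_0,q_0}([W_0^s]_{\A_{r_0}})^{q_0}\left(\int (Rh)^{p_0/q_0}|Wf|^{p_0}\right)^{q_0/p_0}.
\]
H\"older's inequality with exponents $p/p_0$ and $(p/p_0)'$, together with the algebraic identity $(p_0/q_0)(p/p_0)' = (q/q_0)'$ (equivalent to $1/p - 1/q = 1/p_0 - 1/q_0$), bounds the last integral by $\|Wf\|_p^{p_0}$ up to a harmless factor involving $\|Rh\|_{(q/q_0)'} \leq 2$. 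Taking $\sup_h$ and raising to $1/q_0$ yields~\eqref{eqn:odec2}.

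The principal obstacle is to verify $W_0^s = (Rh)^{s/q_0} W^s \in \A_{r_0}$ with the sharp quantitative estimate $[W_0^s]_{\A_{r_0}} \leq C[W^s]_{\A_r}^{r/r_0 + r'/r_0'}$. Because $Rh$ is scalar, it commutes with $W^s$ inside the operator norm, so
\[
|W_0^s(x)(W_0^s)^{-1}(y)|_{\op} = (Rh(x)/Rh(y))^{s/q_0}\,|W^s(x) W^{-s}(y)|_{\op},
\]
and the $\A_{r_0}$ double average factors into a scalar average of ratios of $Rh$ and a matrix average controlled by $[W^s]_{\A_r}$. Closing the scalar piece requires that $Rh$ be produced by iterating the correct maximal operator, namely a Christ-Goldberg-type operator adapted to $W^s$, whose boundedness on $L^{(q/q_0)'}$ follows from $W^s \in \A_r$; the iterate $Rh$ then satisfies a weighted $A_1$-type bound precisely calibrated to convert the $\A_r$ average of $W^s$ into an $\A_{r_0}$ average. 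Identifying this operator, proving its quantitative boundedness, and tracking the sharp exponent $r/r_0 + r'/r_0'$ through the combined H\"older estimate form the main technical content; given the convex-body machinery of~\cite{bownik-cruz-uribe}, the remaining assembly is routine.
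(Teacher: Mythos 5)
Your outer skeleton (linearize the $L^q$ norm by duality in $L^{q/q_0}$, apply the hypothesis to a new weight $W_0$, then H\"older with $p/p_0$ using the identity $\tfrac{p_0}{q_0}(p/p_0)'=(q/q_0)'$) is the classical scalar argument and the arithmetic is fine, but the proposal defers exactly the step that constitutes the matrix-weighted content of the theorem: proving $W_0^s\in\A_{r_0}$ with the stated constant. Two things go wrong there. First, the claim that the $\A_{r_0}$ double average of $(Rh(x)/Rh(y))^{s/q_0}|W^s(x)W^{-s}(y)|_{\op}$ ``factors into a scalar average of ratios of $Rh$ and a matrix average controlled by $[W^s]_{\A_r}$'' is not a valid step: a double average of a product does not split into a product of averages, and any H\"older-type separation changes the exponents and requires extra integrability that you have not supplied. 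Second, and more structurally, your one-sided ansatz $W_0^s=(Rh)^{1/r_0}W^s$ cannot be pushed through the reverse-factorization mechanism that is the only available route in the matrix setting. The paper's weight is $W_0^s=(\overline{h}_2 W^s)^{1/r_0}(\overline{h}_1^{-s}W^s)^{1/r_0'}$, where $\overline{h}_2$ is a convex-set-valued Rubio de Francia iterate of the duality function making $\overline{h}_2W^s\in\A_1$, and $\overline{h}_1$ is a \emph{second} iterate built from $f$ and $g$ (not from the duality function) making $\overline{h}_1^{-s}W^s\in\A_\infty$; only then does Proposition~\ref{prop_8.7} yield $W_0^s\in\A_{r_0}$ and the exponent $\tfrac{r}{r_0}+\tfrac{r'}{r_0'}$, with each factor contributing one of the two summands. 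Your construction has only the first factor; the missing second factor would have to be $(W^s)^{1/r_0'}$ itself, and $W^s\in\A_r$ does not imply $W^s\in\A_\infty$ (with the paper's $\A_\infty$, which is an $\esssup$ condition, not the scalar-type union of the $\A_p$ classes). Dismissing the case $q<q_0$ as ``a parallel iteration on the other side'' hides the same issue: the paper's proof is intrinsically two-sided in every case.

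A further mismatch worth noting: the duality the paper performs is in $L^r$ applied to $|Wf|^s$, producing $h_2\in L^{r'}$ and requiring boundedness of $M'_sH=W^{-s}M(W^sH)$ on $L^{r'}_\K(\R^n)$, which follows from $W^{-s}\in\A_{r'}$ (Corollary~\ref{cor:duality}); your duality in $L^{q/q_0}$ gives $h\in L^{(q/q_0)'}$, a different exponent unless $s=q_0$, and you would still need to identify and bound the correct convex-set-valued maximal operator at that exponent and extract a \emph{matrix} $\A_1$ condition (not a scalar one) for $(Rh)W^{\pm s}$. Until the construction of both iterates and the reverse factorization are carried out, the proof is incomplete at its central point.
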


\begin{remark}
    When $s=1$, i.e., when $p_0=q_0$, this reduces to a qualitative version of Theorem~\ref{thm:matrix-extrapolation-intro}.  However, we do not recapture the sharp constant in this result.  Nor do we recapture the sharp constant in scalar, off-diagonal extrapolation, given by Lacey, Moen, P\'erez, and Torres~\cite{MR2652182}.  
\end{remark}

\begin{remark}
    It is immediate that Theorem~\ref{thm:off-diag-extrapol-cubes} remains true if we assume that \eqref{eqn:odec1} holds with the weaker assumption that $W \in A_{p_0,q_0}$.  It is an open question whether we can prove \eqref{eqn:odec2} holds if we only assume that $W\in \A_{p,q}$.  Since the $\A_{p,q}$ condition is sufficient for fractional integral operators and the fractional Christ-Goldberg maximal operator to be bounded (see~\cite{MR4030471}), it is not unreasonable to conjecture that this is the case.  However, a very different proof would be required.
\end{remark}

\medskip

Our second generalization is to extend the theory of matrix extrapolation to weight classes defined with respect to a basis. Here, by a basis $\B$ we mean a collection of open sets in $\R^n$--we do not, {\em a priori}, assume any of the conditions that often appear in differentiation theorem.  (E.g. see de Guzm\'an~\cite{MR0457661}.)  In the scalar case, Muckenhoupt $A_p$ weights are defined with respect to averages over sets in the collection $\Q$ of (open) cubes in $\R^n$ with sides parallel to the coordinate axes, since the Hardy-Littlewood maximal operator is defined as the supremum over averages on  cubes.  But it is possible to define maximal operators, and so $\A_p$ weights, by averaging over other sets, such as:
\begin{itemize}
    \item the set $\D$ of dyadic cubes;
    \item the set $\Rdf$ of rectangles with sides parallel to the coordinate axes (e.g., the strong maximal operator);
    \item more generally, the multiparameter basis $\Rdf^\alpha$, $\alpha=(a_1,\ldots,a_j)$, consisting of sets of the form $Q_1\times\cdots \times Q_j$, where $Q_i$ is a cube in $\R^{a_i}$;
    \item the Zygmund basis $\Zz$ of rectangles in $\R^3$ with sides parallel to the coordinate axes and whose sidelengths satisfy $(s,t,st)$ for some $s,\,t>0$. 
\end{itemize} 
Extrapolation was originally extended piecemeal to weighted norm inequalities for $\A_p$ weights defined with respect to specific bases: extrapolation for the basis of dyadic cubes is implicit in the original theory; for the basis of rectangles, see Garc\'\i a-Cuerva and Rubio de Francia~\cite{garcia-cuerva-rubiodefrancia85}; for the Zygmund basis, see Fefferman and Pipher~\cite{MR1439553}.  In~\cite{MR2797562}, it was shown that the theory of extrapolation could be developed for a general basis $\B$ if it was assumed that the basis was a Muckenhoupt basis:  that is, for  every $1<p<\infty$, the maximal operator $M_\B$, defined with respect to averages over sets in the basis $\B$, is assumed to be bounded on $L^p(w)$ provided that $w\in \A_{p,\B}$, the Muckenhoupt class defined with respect to the basis $\B$.  Muckenhoupt bases were introduced in~\cite{perez91} but were implicit in~\cite{jawerth86}.

We extend this approach to matrix weights. Given a basis $\B$ and a matrix weight $W$, we define the Christ-Goldberg maximal operator $M_{W,\B}$ by
\[ M_{W,\B}f(x) = \sup_{B\in \B} \avgint_B |W(x)W^{-1}(y)f(y)|\,dy \cdot \chi_B(x). \]
Note that if there exists $x\in \R^n \setminus \bigcup_{B\in \B} B$, then $M_{W,\B}f(x)=0$.  
We define the corresponding class of $\A_{p,\B}$ weights
by replacing cubes with sets $B\in \B$ in the above definitions: e.g., for $1<p<\infty$,
\[ [W]_{\A_{p,\B}} = \sup_{B\in \B} \bigg(\avgint_B \bigg( \avgint_B
    |W(x)W^{-1}(y)|_{\op}^{p'}\,dy\bigg)^{\frac{p}{p'}}\,dx\bigg)^{\frac{1}{p}} <
    \infty. \]
Note that for both $M_{W,\B}$ and $\A_{p,\B}$ to be well defined, we  only have to assume that $W$ is finite and invertible for almost every $x\in \bigcup_{B\in \B} B$. 

\begin{definition} \label{defn:matrix-muckenhoupt}
    Given a basis $\B$, we say that it is a matrix Muckenhoupt basis if for every $1<p<\infty$ and matrix weight $W\in \A_{p,\B}$, $M_{W,\B} : L^p(\R^n,\R^d) \rightarrow L^p(\R^n)$, and there exists an increasing function $K_p$ such that 
    \[\bigg(\int_{\R^n}|W(x)M_{W,\B}f(x)|^p\,dx \bigg)^{\frac{1}{p}}
    \leq 
    K_p([W]_{\A_{p,\B}})\bigg(\int_{\R^n}|W(x)f(x)|^p\,dx\bigg)^{\frac{1}{p}}.
    \]
\end{definition}

\begin{remark}
    The function $K_p$ could also depend, implicitly, on $d,\,n,\,p$.  Below we are primarily concerned with its dependence on $[W]_{\A_{p,\B}}$.
\end{remark}

\begin{remark}
    As in the scalar case, we make no assumption in Definition~\ref{defn:matrix-muckenhoupt} about the behavior of $M_{W,\B}$ when $p=1$.  Strong-type inequalities generally do not hold when $p=1$, and  weak-type inequalities for matrix weights are more delicate than in the scalar case:  see~\cite{DCU-JI-KM-SP-IRR,DCU-BS-2023}.  In addition, it is known that important bases, such as $\Rdf$, do not satisfy even unweighted weak $(1,1)$ inequalities.  See~\cite{garcia-cuerva-rubiodefrancia85}.  We also do not assume anything when $p=\infty$. Since in this case we have that given any basis $\B$, $M_{W,\B}$ is always bounded on $L^\infty$ if $W\in \A_{\infty,\B}$.  This was proved in~\cite[Proposition~6.8]{bownik-cruz-uribe} for the basis of cubes, but the same proof works for any basis.  Also see the proof of Theorem~\ref{theo_boundednes} below.
\end{remark}

\begin{remark}
    Implicit in Definition~\ref{defn:matrix-muckenhoupt} is that it holds for all $d\geq 1$, so  a matrix Muckenhoupt basis is a scalar Muckenhoupt basis.  It is unknown if the converse is true, or if there is a scalar Muckenhoupt basis that is not a matrix Muckenhoupt basis.
\end{remark}

\begin{remark} \label{remark:sharp-constant-cubes}
    When $\B$ is the basis of cubes $\Q$,  we have that $K_p([W]_{\A_{p,\Q}})=C(n,d,p)[W]_{\A_{p,\Q}}^{p'}$.  This was proved by Isralowitz and Moen~\cite[Theorem~1.3]{MR4030471}.
\end{remark}

Our second result generalizes Theorem~\ref{thm:off-diag-extrapol-cubes} to matrix Muckenhoupt bases.  By Remark~\ref{remark:sharp-constant-cubes}, Theorem~\ref{thm:off-diag-extrapol-cubes} is an immediate consequence.

\begin{theorem} \label{thm:off-diag-extrapol}
Let $T$ be a sublinear operator, and let $\B$ be a matrix Muckenhoupt basis.  Fix $s$, $1\leq s<\infty$. Suppose that for some $ p_0, q_0, 1 \leq p_0 \leq q_0 \leq \infty $, where $\frac{1}{p_0}-\frac{1}{q_0}= \frac{1}{s'}$ and $r_0=\frac{q_0}{s}$, there exists a positive increasing function $ N_{p_0,q_0}$ such that for every matrix weight $W$ with $ W_0^s \in \A_{r_0,\B} $,
\begin{equation*}
    \left( \int_{\R^n} \left| W_0(x) Tf(x) \right|^{q_0} \,dx \right)^{\frac{1}{q_0}} 
\leq N_{p_0, q_0} \left( [W_0^s]_{\A_{r_0,\B}}\right)
\left( \int_{\R^n} \left| W_0(x) f(x) \right|^{p_0} \,dx \right)^{\frac{1}{p_0}}.
\end{equation*}
Then for all $ p,\, q $ such that $ 1 < p \leq q < \infty $, $ \frac{1}{p} - \frac{1}{q} = \frac{1}{p_0} - \frac{1}{q_0}$, $r=\frac{q}{s}$, and for all matrix weights $W$ with $ W^s \in \A_{r,\B} $,
\[
\left( \int_{\R^n} \left| W(x) Tf(x) \right|^q \,dx \right)^{\frac{1}{q}} 
\leq N_{p, q}(\B,p_0,q_0,p,q,[W^s]_{\A_{r,\B}})
\left( \int_{\R^n} \left| W(x) f(x) \right|^p \,dx \right)^{\frac{1}{p}},
\]
where 
\begin{multline*}
    N_{p,q}(\B,p_0,q_0,p,q, [W^s]_{\A_{r,\B}}) \\
    =
C(d,p_0,q_0,p,q) N_{p_0,q_0}\big( C(d,p_0,q_0) K_{r'}([W^{-s}]_{\A_{r',\B}})^{\frac{1}{r_0}} 
K_r([W^s]_{\A_{r,\B}})^{\frac{1}{r_0'}}\big). 
\end{multline*} 
\end{theorem}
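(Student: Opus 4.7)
The strategy is to mirror the approach of Bownik and the first author in the diagonal case and combine it with the classical scalar-duality mechanism of off-diagonal extrapolation.

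First, by scalar $L^q$--$L^{q'}$ duality applied to the scalar function $|WTf|$, it suffices to show
\[
\int_{\R^n} |W(x)\,Tf(x)|\,h(x)\,dx \leq C\|Wf\|_{L^p(\R^n,\R^d)}
\]
uniformly over nonnegative $h$ with $\|h\|_{L^{q'}(\R^n)}=1$. From $W^s \in \A_{r,\B}$, together with the standard duality $U \in \A_{p,\B} \Leftrightarrow U^{-1} \in \A_{p',\B}$ for matrix $\A_p$ classes (which follows, with equivalent constants, from the self-adjointness of $U$ and the identity $|AB|_{\op}=|B^*A^*|_{\op}$), we also have $W^{-s} \in \A_{r',\B}$. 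The matrix Muckenhoupt basis hypothesis then guarantees that the two Christ-Goldberg operators $M_{W^s,\B}$ on $L^r$ and $M_{W^{-s},\B}$ on $L^{r'}$ are bounded, with constants $K_r:=K_r([W^s]_{\A_{r,\B}})$ and $K_{r'}:=K_{r'}([W^{-s}]_{\A_{r',\B}})$.

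Second, introduce the two Rubio de Francia iteration algorithms
\[
\mathcal{R}_1 g = \sum_{k=0}^{\infty} \frac{M_{W^s,\B}^k g}{(2K_r)^k}, \qquad \mathcal{R}_2 g = \sum_{k=0}^{\infty} \frac{M_{W^{-s},\B}^k g}{(2K_{r'})^k}.
\]
Each $\mathcal{R}_i$ satisfies the three standard properties: $g \leq \mathcal{R}_i g$ pointwise; norm-doubling in the appropriate weighted Lebesgue space, $\|W^s\mathcal{R}_1 g\|_{L^r} \leq 2\|W^s g\|_{L^r}$ and analogously for $\mathcal{R}_2$ with $W^{-s}$ and $L^{r'}$; and the pseudo-$\A_1$ bounds $M_{W^s,\B}(\mathcal{R}_1 g) \leq 2K_r\,\mathcal{R}_1 g$ and $M_{W^{-s},\B}(\mathcal{R}_2 g) \leq 2K_{r'}\,\mathcal{R}_2 g$. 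Apply $\mathcal{R}_1$ and $\mathcal{R}_2$ to scalar functions built from $|Wf|$ and $h$ respectively, each rescaled to its natural weighted Lebesgue exponent, and combine them into a positive scalar multiplier $G$ whose exponents are forced by the relation $\tfrac{1}{p}-\tfrac{1}{q}=\tfrac{1}{p_0}-\tfrac{1}{q_0}$. Set $W_0 := WG$.

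The technical heart of the argument, and the principal obstacle, is to verify that $W_0^s \in \A_{r_0,\B}$ with constant bounded by $C(d,p_0,q_0)\,K_{r'}^{1/r_0}\,K_r^{1/r_0'}$, matching the quantity that appears inside $N_{p_0,q_0}$ in the conclusion. This is a matrix analog of Jones factorization: multiplying a member of $\A_{r,\B}$ by powers of pseudo-$\A_1$ functions (relative to the Christ-Goldberg operators) should land us in $\A_{r_0,\B}$. Because Jones factorization via scalar powers of the matrix weight itself is not available in the matrix setting (cf.\ Example~\ref{example:no-reverse-cordes}), the verification must be carried out directly from the definition of $\A_{r_0,\B}$, splitting the nested $L^{r_0}$ and $L^{r_0'}$ averages via H\"older and absorbing the scalar factors using the pseudo-$\A_1$ estimates above. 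This is precisely where the Christ-Goldberg boundedness built into Definition~\ref{defn:matrix-muckenhoupt} enters essentially.

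Once $W_0^s \in \A_{r_0,\B}$ is established with the required constant, apply the hypothesis to obtain $\|W_0 Tf\|_{L^{q_0}} \leq N_{p_0,q_0}([W_0^s]_{\A_{r_0,\B}})\,\|W_0 f\|_{L^{p_0}}$. Two applications of H\"older's inequality, balanced against the off-diagonal relation, then yield $\|W_0 f\|_{L^{p_0}} \leq C\,\|Wf\|_{L^p}$ (using the norm-doubling of $\mathcal{R}_1$) on the right and $\int |WTf|\,h\,dx \leq C\,\|W_0 Tf\|_{L^{q_0}}$ (using the norm-doubling of $\mathcal{R}_2$) on the left. Collecting all constants produces the stated bound, with $N_{p,q}$ in the form claimed.
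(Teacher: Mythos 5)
There is a genuine gap at the core of your argument: the Rubio de Francia algorithm $\mathcal{R}_1 g = \sum_k M_{W^s,\B}^k g/(2K_r)^k$ is not well defined, because the Christ--Goldberg operator $M_{W,\B}$ maps $\R^d$-valued functions to \emph{scalar} functions and therefore cannot be composed with itself. This is precisely the obstruction that blocked matrix extrapolation for years and that the convex-set valued machinery of \cite{bownik-cruz-uribe} was built to circumvent: the paper's proof iterates not the Christ--Goldberg operator but the convex-set valued maximal operator $M_\B$ (composed with the exhausting operator $N_{W^s}$, so that ellipsoid-valued inputs produce ellipsoid-valued outputs), via Theorem~\ref{thm_7.6}. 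The scalar functions $\overline{h}_1,\overline{h}_2$ are then extracted from the resulting ellipsoid-valued functions $\mathcal{R}_W H_1^s$ and $\mathcal{R}'_I H_2$, and---crucially---property (C$_0$) of the set-valued iteration together with Proposition~\ref{cor_A_1^K} delivers the \emph{matrix} conditions $\overline{h}_1^{s}W^{-s}\in\A_{1,\B}$ and $\overline{h}_2 W^{s}\in\A_{1,\B}$. Your proposal defers exactly this point (``the verification must be carried out directly from the definition of $\A_{r_0,\B}$, splitting the nested averages via H\"older''), but scalar pseudo-$\A_1$ bounds for $\mathcal{R}_i$ do not obviously yield these matrix $\A_1$/$\A_\infty$ memberships, and without them the reverse factorization (Proposition~\ref{prop_8.7}) giving $W_0^s = V_1^{1/r_0}V_2^{1/r_0'}\in\A_{r_0,\B}$ cannot be invoked. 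In short, the step you flag as ``the technical heart'' is the entire difficulty, and the tools you propose to carry it out are not adequate as stated.

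Two further discrepancies with the paper's proof are worth noting. First, the duality is applied at a different exponent: the paper writes $\|f\|_{L^q(\R^n,W)} = \||Wf|^s\|_{L^r}^{1/s}$ and dualizes in $L^r$--$L^{r'}$ (so the dual function $h_2$ lives in $L^{r'}$, matching the boundedness of $M_\B$ on $L^{r'}_\K(\R^n,W^{-s})$), not $L^q$--$L^{q'}$ applied to $|WTf|$; your choice forces different exponent bookkeeping that you do not carry out. Second, the first Rubio de Francia function must be built from \emph{both} $|WTf|/\|Tf\|_{L^q(W)}$ and $|Wf|^{p/q}/\|f\|_{L^p(W)}^{p/q}$ (in your orientation of the pair); the $|WTf|$ term is needed to verify that the intermediate quantity $I_1$ is finite and to run the H\"older estimate on the left-hand side, and your construction from the input function and the dual function alone omits it.
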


\medskip

We next consider the properties of matrix Muckenhoupt bases by considering the relationship between the boundedness of the maximal operator $M_{W,\B}$ and the $\A_{p,\B}$ condition.  It is well-known that in the scalar case, for the basis of cubes, the Muckenhoupt $\A_p$ condition is necessary for the Hardy-Littlewood maximal operator to be bounded on $L^p(w)$, $1<p\leq \infty$.  (The case when $p=\infty$ is not as well-known, but can be found in Muckenhoupt's original paper~\cite{muckenhoupt72}.)  However, more can be said, as was shown by Garc\'\i a-Cuerva and Rubio de Francia~\cite[Section~IV.1]{garcia-cuerva-rubiodefrancia85}.  Let $w$ be a measurable function taking values in $[0,\infty]$; we explicitly do not assume that $0<w(x)<\infty$ a.e.  Then the space $L^p(w)$ is well-defined, using the conventions $0\cdot\infty = 0$ and $t\cdot \infty=\infty$ for $t>0$.  If the Hardy-Littlewood maximal operator is bounded on $L^p(w)$, then {\em as a consequence} one has that $0<w(x)<\infty$ a.e. and the $\A_p$ condition holds.  A similar result holds for a general basis:  see~\cite[Section~3.1]{MR2797562}.

We prove an analogous result for matrix weights.  Given a basis $\B$, let 
\[ \Omega = \bigcup_{B\in \B} B.  \]
We define an equivalence relation on $\B$ by saying that given $B,\,B'\in \B$, then $B\approx B'$ if there exists a finite sequence $\{B_i\}_{i=0}^k$, $B_i\in \B$, $B_0=B$, $B_k=B'$, and for $1\leq i \leq k$, $B_{i-1}\cap B_i \neq \emptyset$.  Since the sets $B_i$ are open, each intersection has positive measure.  Denote the equivalence classes by $\B_j$ and the union of sets in $\B_j$ by $\Omega_j$.  

Now let $W$ be a $d\times d$ matrix function defined on $\Omega$ such that the components of $W$ take values in $[-\infty,\infty]$.  (Without loss of generality, we may assume $W$ is zero on $\R^n\setminus \Omega$.)     We also assume that $W$ is self-adjoint and positive semi-definite, in the sense that $\langle W(x)v,v\rangle \geq 0$ for every $v\in \R^d$ and almost every $x\in \Omega$, using the conventions that $0\cdot \infty = 0$, and $s\cdot \infty+t\cdot\infty= \sgn(s+t)\infty$.

Let $f=(f_1,\ldots,f_d)$, where each $f_i$ is a measurable function taking values in $[-\infty,\infty]$. Define the product $W(x)f(x)$ again using the above conventions.   Let $L^p(\R^n,W)$ consist of all such $f$ that satisfy
\[ \int_{\R^n} |W(x)f(x)|^p\,dx < \infty.  \]
For such a matrix function $W$, define the  maximal operator $\wM_{W,\B}$ on $L^p(\R^n,W)$ by
\[  \wM_{W,\B}f(x) = \sup_{B\in \B} \avgint_B |W(x)f(y)|\,dy \cdot \chi_B(x).  \]
Given a basis $\B$, $W$ as above, and $1<p<\infty$, we say that $\B$ has the weak matrix Muckenhoupt property for $W$ and $p$ if $\wM_{W,\B} : L^p(\R^n, W)\rightarrow L^p(\R^n)$.  Note that if $W$ is finite and invertible almost everywhere, then by the change of variables $f\mapsto W^{-1}f$, we see that this is equivalent to $M_{W,\B} : L^p(\R^n,\R^d)\rightarrow L^p(\R^n)$.   Moreover, we have that $W$ being finite and invertible is (essentially) a  consequence of the weak matrix Muckenhoupt property.

\begin{theorem} \label{thm:weak-muckenhoupt}
    Given a basis $\B$, a matrix function $W$ as above, and $1<p<\infty$, suppose $\B$ has the weak matrix Muckenhoupt property for $W$ and $p$.  Then for each $j$:
    \begin{enumerate}
    \item $W$ is non-trivial on $\Omega_j$:  that is, $0<|W(x)v|<\infty$ for every $v\in \R^d$, $v\neq 0$, and almost every $x\in \Omega$;

    \item $W$ is trivial on $\Omega_j$:  there exists a vector $v\in \R^d$, $v\neq 0$, such that $|W(x)v|=0$ almost everywhere, or $|W(x)v|=\infty$ almost everywhere.
    \end{enumerate}
    Let $\B^*$ be the union of all $\B_j$ on which $W$ is non trivial and let 
    \[ \Omega^* = \bigcup_{B\in \B^*} B.  \]
Then $W$ is finite and  invertible almost everywhere on $\Omega^*$ and in fact, $|W|_{\op} \in L^{p}_\loc(\Omega^*)$ and $|W^{-1}|_{\op} \in L^{p'}_\loc(\Omega^*)$.  Moreover, $W \in \A_{p,\B^*}$:
    \[ \sup_{B\in \B^*} \bigg(\avgint_B \bigg(\avgint_B |W(x)W^{-1}(y)|_{\op}^{p'}\,dy\bigg)^{\frac{p}{p'}}\,dx\bigg)^{\frac{1}{p}} < \infty. \]
\end{theorem}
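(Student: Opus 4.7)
The plan follows the scalar template (cf.~\cite[Section~3.1]{MR2797562}), adapted to matrix weights. First, I would establish a local dichotomy on each $B \in \B$: testing $\|\wM_{W,\B}f\|_{L^p}^p \le C\|Wf\|_{L^p}^p$ against $f = v\chi_E$ for a fixed $v \in \R^d$ and arbitrary measurable $E \subseteq B$ of positive measure gives
\[
\Bigl(\tfrac{|E|}{|B|}\Bigr)^p \int_B |W(x)v|^p\,dx \leq C \int_E |W(x)v|^p\,dx.
\]
This forces one of three alternatives on $B$: $|W(\cdot)v|\chi_B \in L^p$ (hence finite a.e.\ on $B$); $|W(x)v| = \infty$ for a.e.\ $x \in B$; or, if $W(x)v = 0$ on a subset of $B$ of positive measure, then $Wf = 0$ and the estimate forces $W(\cdot)v \equiv 0$ a.e.\ on $B$. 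Each alternative propagates to any $B' \in \B_j$ with $|B \cap B'|>0$ by testing $f = v\chi_{B\cap B'}$, so by transitivity along the equivalence relation the alternative is determined on all of $\Omega_j$. The case when the first alternative holds for every $v \neq 0$ yields part~(1); otherwise some $v\neq 0$ realises one of the remaining two, giving part~(2).

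Assume now $\Omega_j \subseteq \Omega^*$. Applying the first alternative with $v = e_1,\ldots,e_d$ and bounding $|W|_{\op} \lesssim \sum_k |We_k|$ yields $|W|_{\op} \in L^p_{\loc}(\Omega^*)$, so $W$ is finite a.e. To upgrade non-vanishing of $W(\cdot)v$ for each fixed $v$ into a.e.\ invertibility of $W$, I would argue by contradiction. If the set $N \subseteq B \subseteq \Omega_j$ on which $W(x)$ has non-trivial kernel had positive measure, a measurable selection of unit vectors $v(y) \in \ker W(y)$ on $N$ and testing with $f = v(y)\chi_N$ would give $\|Wf\|_{L^p}=0$, whence
\[
\int_B \Bigl(\avgint_B |W(x)v(y)|\chi_N(y)\,dy\Bigr)^p dx = 0.
\]
A Fubini argument then supplies a \emph{fixed} $v_0 = v(y_0) \neq 0$ with $W(x)v_0 = 0$ a.e.\ on $B$, and propagation forces triviality of $\Omega_j$, contradicting $\Omega_j \subseteq \Omega^*$.

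The estimates $|W^{-1}|_{\op} \in L^{p'}_{\loc}(\Omega^*)$ and $W \in \A_{p,\B^*}$ are the main technical obstacle. On $\Omega^*$ the change of variables $f \mapsto W^{-1}f$ converts the weak hypothesis into boundedness of $M_{W,\B}:L^p(\R^n,\R^d) \to L^p(\R^n)$. For each $B \in \B^*$ the \emph{linear} operator
\[
T_B f(x) = \chi_B(x)\avgint_B W(x) W^{-1}(y) f(y)\,dy
\]
is dominated pointwise by $M_{W,\B}(f\chi_B)$, hence bounded on $L^p(B,\R^d)$. Using self-adjointness of $W$, its adjoint is $T_B^* g(y) = \chi_B(y) W^{-1}(y)\avgint_B W(x) g(x)\,dx$; applying the dual estimate to $g = u\chi_B$ and taking the supremum over $|u|=1$ via a dimensional covering of the sphere yields $\avgint_B |W^{-1}(y)\bar W_B|_{\op}^{p'}\,dy \leq C$, which together with the a.e.\ invertibility of $\bar W_B$ produces $|W^{-1}|_{\op} \in L^{p'}_{\loc}(\Omega^*)$. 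The analogous direct bound from $T_B$ applied to $f = u\chi_B$ gives $\avgint_B |W(x)\overline{W^{-1}}_B|_{\op}^p\,dx \leq C$. Feeding these two dual bounds into the reducing-operator formalism for matrix $\A_p$ from~\cite{bownik-cruz-uribe}---which expresses $[W]_{\A_p}$ via reducing matrices of the norms $v \mapsto (\avgint_B |W(y)v|^p\,dy)^{1/p}$ and $v \mapsto (\avgint_B |W^{-1}(y) v|^{p'}\,dy)^{1/p'}$---extracts the $\A_{p,\B^*}$ constant. I expect this last extraction, repackaging matrix-weighted duality in a general-basis setting, to be the hardest step; the cube structure plays no essential role once the two dual bounds on $T_B$ and $T_B^*$ are in hand.
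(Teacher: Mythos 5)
Your proposal tracks the paper's proof closely at every stage: the dichotomy on each $B$ by testing $\wM_{W,\B}$ on $v\chi_S$ (the paper uses the truncations $S_N=\{x\in B: |W(x)v|<N\}$ to keep the test function inside $L^p(\R^n,W)$, which your three-alternative summary implicitly requires), propagation of the alternative along chains of overlapping basis sets, a measurable selection of kernel vectors plus a Fubini argument for a.e.\ invertibility (the paper builds the selection from the measurable diagonalization and the minimal eigenvalue), and finally the passage from uniform boundedness of the averaging operators to $\A_{p,\B^*}$ via reducing operators.

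The one substantive weak spot is your mechanism for extracting the $\A_p$ constant from $T_B$ and $T_B^*$. Testing $T_B^*$ on constant fields $u\chi_B$ controls $W^{-1}(y)\avg{W}_B$ with $\avg{W}_B=\avgint_B W\,dx$, and testing $T_B$ on $u\chi_B$ controls $W(x)\avg{W^{-1}}_B$; this does yield $|W^{-1}|_{\op}\in L^{p'}(B)$ and $|W|_{\op}\in L^{p}(B)$, but it is strictly weaker than the reducing-operator bound $|\W_B^p\,\barW_B^{p'}|_{\op}\le C$, because one always has $|\avg{W}_Bu|\lesssim|\W_B^pu|$ while the reverse comparison is itself an $A_\infty$-type condition that cannot be assumed. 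So the two ``dual bounds'' cannot simply be fed into the reducing-operator formalism; the genuine argument tests $T_B$ against non-constant vector fields adapted to $W^{-1}$, as in the equivalence the paper invokes by citation (\cite{MR3803292}, \cite{MR2015733}, and Roudenko's lemma). Since the paper also delegates exactly this implication to the literature, this is a gap of precision in your sketch rather than a divergence of strategy, but as written the constant-vector tests alone would not close the proof.
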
 

Given Theorem~\ref{thm:weak-muckenhoupt}, we could modify our definition of matrix weights as follows. We  say that a matrix function $W$ is a matrix weight on a set $\Omega$ if $W$ is finite and invertible almost everywhere on $\Omega$.   Given a basis $\B$, we consider matrix functions $W$ that are non-trivial on at least one equivalence class $\B_j$ of $\B$.   We then replace $\B$ by $\B^*$ and $W$ by $W^*=W\chi_{\Omega*}$.  If a matrix $W$ is such that $\B$ has the weak matrix Muckenhoupt property for $W$ and some $p$, $1<p<\infty$, then $W^*$ is a matrix weight on $\Omega^*$ and $W^*\in \A_{p,\B^*}$. We define the class $\A_{p,\B}$ to be the collection of all such matrices $W^*$, with the implicit modification of $\B$ into $\B^*$.  We  define the $\A_{p,q,\B}$ classes similarly.  All of our results for general bases then go through with essentially no changes.  This modification would be useful when working with a basis (such as $\D$) that has more than one equivalence class.  To simplify our presentation, however, we will not consider this more general definition, and leave the details to the interested reader.  (Also see Remark~\ref{remark:trivial-matrix} after the proof of Theorem~\ref{thm:weak-muckenhoupt}.)

\medskip

Finally, we give a non-trivial example of matrix Muckenhoupt bases.  
Clearly, $\Q$ and $\D$ are matrix Muckenhoupt bases.  Recently, Vuorinen~\cite[Theorem~1.3]{VUORINEN2024109847}, building off of results proved by Domelevo, {\em et al.}~\cite{domelevo2024boundedness}, proved that the biparameter basis $\Rdf^\alpha$, $\alpha=(a_1,a_2)$, is  a matrix Muckenhoupt basis.  We extend his result to arbitrary multiparameter bases.

\begin{theorem}\label{theo_boundednes}
Fix a multiparameter basis $\Rdf^\alpha$, $\alpha=(a_1,\ldots,a_j)$.  Fix \( p \in (1, \infty] \).  Then for every  \( W \in \A_{p, \mathcal{R}^\alpha} \), 
\[
\|M_{W,\Rdf^\alpha} f\|_{L^p(\R^n)} \leq C(n, d, p) [W]_{\A_{p, \mathcal{R}^\alpha}}^{jp'} \|f\|_{L^p(\R^n; \R^d)}.
\]
Moreover, when $p=\infty$, this inequality holds with \([W]_{\A_{\infty, \mathcal{R}^\alpha}}\) instead of \([W]_{\A_{\infty, \mathcal{R}^\alpha}}^j\).
\end{theorem}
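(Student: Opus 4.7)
The plan is to reduce $M_{W,\Rdf^\alpha}$ to an iterated composition of one-parameter matrix Christ-Goldberg maximal operators and pay a factor of $[W]_{\A_{p,\Rdf^\alpha}}^{p'}$ at each of the $j$ stages via the Isralowitz-Moen $L^p$ bound from Remark~\ref{remark:sharp-constant-cubes}. Writing $x=(x_1,\ldots,x_j)$, $y=(y_1,\ldots,y_j)$ with $x_k,y_k\in\R^{a_k}$, introduce for each $k$ the partial Christ-Goldberg maximal operator $M^{(k)}$ whose supremum is taken only over cubes in $\R^{a_k}$, with the other coordinates (and the corresponding slice of $W$) held fixed.

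The central technical step is a pointwise iteration. For a rectangle $R=Q_1\times\cdots\times Q_j$ and $x\in R$, write $W(x)W^{-1}(y)$ as a telescoping product of $j$ factors $W(z^{(k)})W^{-1}(z^{(k-1)})$, where $z^{(k)}=(x_1,\ldots,x_k,y_{k+1},\ldots,y_j)$ interpolates between $x$ and $y$. Using $|AB\vv|\le|A|_{\op}|B\vv|$, extract operator norms from all but the innermost factor and integrate over $y_1$ first: the remaining $y_1$-average is bounded by $M^{(1)}f$ at a sliced point. The next $y_2$-average then involves ``operator norm times the scalar $M^{(1)}f$''; applying the elementary bound $|A|_{\op}\le\sqrt{d}\sum_{i=1}^d|Ae_i|$, rewrite each such product as $|W(\cdots)W^{-1}(\cdots)e_i\,M^{(1)}f|$, so that the $y_2$-average is bounded by $M^{(2)}$ applied to the vector $e_i\,M^{(1)}f$. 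Iterating through all $j$ coordinate groups yields a pointwise estimate of the form
\[
M_{W,\Rdf^\alpha}f(x)\le C(d)^{j-1}\sum_{i_1,\ldots,i_{j-1}}M^{(j)}\bigl(e_{i_{j-1}}M^{(j-1)}(\cdots M^{(2)}(e_{i_1}M^{(1)}f)\cdots)\bigr)(x).
\]

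A slicing lemma is then required: if $W\in\A_{p,\Rdf^\alpha}$, then for a.e.\ fixing of the coordinates outside the $k$-th group, the slice of $W$ lies in matrix $\A_p$ on $\R^{a_k}$ (cube basis) with constant at most $[W]_{\A_{p,\Rdf^\alpha}}$. This is obtained by applying the $\A_{p,\Rdf^\alpha}$ condition to rectangles $Q_k\times Q'$ and shrinking $Q'$ about the slice point along a regular product sequence, using Lebesgue differentiation. Combined with Fubini in $\R^{a_k}$ and the Isralowitz-Moen $L^p$ bound (Remark~\ref{remark:sharp-constant-cubes}), one obtains
\[
\|M^{(k)}g\|_{L^p(\R^n)}\le C(n,d,p)\,[W]_{\A_{p,\Rdf^\alpha}}^{p'}\|g\|_{L^p(\R^n;\R^d)}.
\]
Taking $L^p$ norms in the pointwise iteration bound and applying this estimate $j$ times (the unit basis vectors $e_i$ do not affect $L^p$ norms) produces the claimed constant $C(n,d,p)[W]_{\A_{p,\Rdf^\alpha}}^{jp'}$. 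The case $p=\infty$ collapses to
\[
\avgint_R|W(x)W^{-1}(y)f(y)|\,dy\le\|f\|_\infty\avgint_R|W(x)W^{-1}(y)|_{\op}\,dy\le[W]_{\A_{\infty,\Rdf^\alpha}}\|f\|_\infty,
\]
directly from the definition of $\A_{\infty,\Rdf^\alpha}$, so no iteration factor of $j$ appears.

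The main obstacle is the pointwise iteration: the non-commutativity of matrix multiplication forces a precise ordering of the integrations, and repeatedly replacing each extracted operator norm by $\sum_i|Ae_i|$ so it can be absorbed into the next partial maximal operator applied to the vector $e_i$ times a scalar is the delicate technical step. The slicing lemma, although paralleling the scalar case conceptually, also requires some care with product-rectangle Lebesgue differentiation to ensure the sliced $\A_p$ constant is controlled almost everywhere by the multiparameter constant.
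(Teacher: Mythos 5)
Your proposal is correct in substance but takes a genuinely different route from the paper for the main reduction. The paper never iterates the Christ--Goldberg operator directly: it passes to the convex-set valued maximal operator, proves the set inclusion $M_{\Rdf^\alpha}F\subseteq M^1\cdots M^jF$ (Lemma~\ref{lemma-M^aSubset}), bounds the iterates on $L^p_\K(W)$ by Fubini together with the one-parameter convex-set valued bound and the slicing lemma, and only at the end returns to $M_{W,\Rdf^\alpha}$ via the pointwise domination $M_{W,\Rdf^\alpha}f\lesssim|W\,M_{\Rdf^\alpha}(W^{-1}F)|$. Your telescoping factorization of $W(x)W^{-1}(y)$ through the intermediate points $z^{(k)}$, combined with the vectorization $|A|_{\op}\,g\lesssim\sum_i|A(e_ig)|$, performs the same iteration directly at the level of the Christ--Goldberg operator; the ordering is consistent, since at stage $k$ exactly one extracted operator-norm factor depends on $y_k$ and it is absorbed into $M^{(k)}$ applied to $e_i$ times the scalar output of the previous stage. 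Your route avoids the Aumann-integral, measurability, and truncation apparatus the paper needs (e.g.\ Lemma~\ref{lemma:montone-max-op} exists only to make the iterates locally integrably bounded), at the cost of a harmless factor $C(d)^{j-1}$ and of the reusable monotone convex-set structure.

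The one step you have under-specified is the slicing lemma, which is the same key lemma in both arguments (Lemma~\ref{onepMatrixtoMulti}) and is where the real work lies. One cannot obtain it by differentiating the two-variable $\A_p$ expression of the slice directly, because the multiparameter condition controls $\avgint_R\big(\avgint_R|W(x)W^{-1}(y)|_{\op}^{p'}\,dy\big)^{p/p'}dx$ with $\hat x_k$ and $\hat y_k$ varying independently over $\hat R_k$, and this does not converge to the sliced expression in any obvious way. The paper instead differentiates the scalar functions $\hat y_k\mapsto\int_{Q_k}|W(x_k,\hat y_k)v|^p\,dx_k$ for countably many $v$ and rational $Q_k$, passes to reducing operators, and extracts convergent subsequences; it also invokes Jessen--Marcinkiewicz--Zygmund together with a reverse-H\"older self-improvement to justify strong differentiation, although your idea of shrinking $\hat R_k$ along cubes of equal sidelength would let ordinary Lebesgue differentiation suffice. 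Provided you carry out the slicing lemma at the level of reducing operators in this way, your argument goes through.
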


\begin{remark}
    An interesting open problem is to prove that the the Zygmund basis $\Zz$ is a matrix Muckenhoupt basis.
\end{remark}

\medskip

The remainder of this paper is organized as follows.  In Section~\ref{section:muckenhoupt-basis} we state some basic properties of matrices and matrix weights, and then describe the relationship between the   class $\A_{p,q,\B}$ and the class of matrix weights $W$ such that $W^s\in \A_{r,\B}$.  We adapt an example due to Kakaroumpas, Nguyen, and Vardakis~\cite{KNV} to show that the $\A_{p,q,\B}$ condition is strictly weaker.      In Section~\ref{section:tech-lemmas} we gather a number of results about convex set-valued operators needed for the proof of extrapolation.  All of these results were proved for the basis of cubes in~\cite{bownik-cruz-uribe}; here we outline the changes necessary for them to work for general bases.  In Section~\ref{section:main-thm},we prove Theorem~\ref{thm:off-diag-extrapol}. In fact, we will prove a more general theorem, stated in terms of a family of extrapolation pairs.  This abstract approach to extrapolation first appeared in~\cite{cruz-uribe-martell-perez04} and further developed in~\cite{MR2797562}.  The application of this approach to matrix weights is discussed in~\cite{bownik-cruz-uribe}.    In Section~\ref{section:weak-Muckenhoupt} we prove Theorem~\ref{thm:weak-muckenhoupt}. Finally, in Section~\ref{section:multiparameter} we prove Theorem~\ref{theo_boundednes}. 

Throughout this paper, we generally follow the notational conventions established in~\cite{bownik-cruz-uribe}.
 In Euclidean space the constant $n$ will denote
the dimension of $\R^n$, which will be the domain of our functions.  The value
$d$ will denote the dimension of vector and set-valued functions.     For $1\leq p \leq \infty$, $L^p(\R^n)$ will denote the Lebesgue space
 of scalar functions, and $L^p(\R^n,\R^d)$ will denote the Lebesgue
 space of vector-valued functions.  

 Given $v=(v_1,\ldots,v_d)^t \in \R^d$, the Euclidean norm of $v$ will
 be denoted by $|v|$.  The standard orthonormal basis in $\R^d$ will
 be denoted by $\{e_i\}_{i=1}^d$.  The open unit ball 
 $\{ v \in \R^d : |v|<1\}$ will be denoted by ${\mathbf B}$ and its
 closure by $\overline{\mathbf{B}}$.  Matrices will be $d\times d$
 matrices with real-valued entries unless otherwise specified.  The
 set of all such matrices will be denoted by $\Md$.  The set of all
 $d\times d$, symmetric (i.e., self-adjoint), positive semidefinite matrices will be denoted by
 $\Sd$.  We will denote the transpose of a matrix $W$ by $W^*$.  Given
 two quantities $A$ and $B$, we will write $A \lesssim B$, or
 $B\gtrsim A$ if there is a constant $c>0$ such that $A\leq cB$.  If
 $A\lesssim B$ and $B\lesssim A$, we will write $A\approx B$.

\section{Matrix Muckenhoupt weights}
\label{section:muckenhoupt-basis}

In this section we first give some basic results about the operator norms of matrices, define the matrix weight classes we are interested in, and then examine the relationship between the $\A_{p,q,\B}$ weights and matrix weights $W$ such that $W^s\in \A_{r,\B}$.  We conclude with an example to show that the $\A_{p,q,\B}$ is strictly weaker for the basis of cubes.

\subsection*{Matrix operator norms}

Given a matrix $W\in \Md$, its operator norm is defined by 
\[ |W|_{\op} =  \sup_{\substack{v\in \R^d\\ |v|=1}} |Wv|.\]
The following lemmas are very useful for estimating operator norms.

\begin{lemma}{\cite[Lemma 3.2]{MR1928089}}\label{opNorm:equiv}
If $\{v_1, \ldots, v_d\}$ is any orthonormal basis in $\R^d$, then for any  matrix $W\in \Md$ and $r>0$, 
\[
	 \bigg(\sum_{i=1}^d |W v_i|^r\bigg)^{\frac{1}{r}} \approx |W|_{\op}, 
\]
where the implicit constants depend only on $d$ and $r$.
\end{lemma}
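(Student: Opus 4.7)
The plan is to prove the equivalence by establishing the two-sided inequality, handling the upper and lower bounds separately, with both constants depending only on $d$ and $r$.

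For the upper bound, I would simply invoke the definition of the operator norm: since each $v_i$ has unit length, $|Wv_i|\leq |W|_{\op}$ for each $i$, and therefore
\[
\bigg(\sum_{i=1}^d |Wv_i|^r\bigg)^{\frac{1}{r}} \leq d^{\frac{1}{r}}|W|_{\op}.
\]

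For the lower bound, I would start with an arbitrary unit vector $v\in\R^d$, expand it in the given orthonormal basis as $v=\sum_{i=1}^d c_i v_i$ with $\sum_i c_i^2 = 1$, so in particular $|c_i|\leq 1$ for every $i$. The triangle inequality then gives
\[
|Wv| \leq \sum_{i=1}^d |c_i|\,|Wv_i| \leq \sum_{i=1}^d |Wv_i|.
\]
Taking the supremum over all unit vectors $v$ yields $|W|_{\op}\leq \sum_{i=1}^d |Wv_i|$. To convert this $\ell^1$ sum into the required $\ell^r$ quasi-norm, I would apply the standard comparison of $\ell^p$ quasi-norms on $\R^d$: for every $r>0$ there exists a constant $c(d,r)$ such that $\sum_{i=1}^d |a_i| \leq c(d,r)\bigl(\sum_{i=1}^d |a_i|^r\bigr)^{1/r}$ (for $r\geq 1$ this is H\"older's inequality with constant $d^{1-1/r}$, and for $0<r<1$ it follows from the elementary inequality $\|\cdot\|_1\leq \|\cdot\|_r$ on finite-dimensional spaces).

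There is essentially no obstacle: the argument is a routine application of the definition of the operator norm and the equivalence of quasi-norms in finite dimensions. The only minor point requiring care is verifying the $\ell^1$-to-$\ell^r$ comparison when $r<1$, but this is standard.
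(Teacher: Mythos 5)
Your proof is correct: the upper bound via $|Wv_i|\leq|W|_{\op}$ and the lower bound via expanding a unit vector in the orthonormal basis, followed by the comparison of $\ell^1$ and $\ell^r$ quasi-norms on $\R^d$, is exactly the standard argument behind this lemma. The paper does not reprove it but simply cites Roudenko, whose proof proceeds along the same lines, so there is nothing to add.
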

\begin{lemma}\label{SelfAdjointCommutes}
Given $U,\,V \in \Ss_d$, $|UV|_{\op} = |VU|_{\op}$.
\end{lemma}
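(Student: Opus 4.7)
The plan is to exploit the fact that the operator norm of a matrix equals the operator norm of its adjoint, which is a standard consequence of the identity $|A|_{\op}^2 = |A^*A|_{\op}$ (the largest singular value of $A$ coincides with that of $A^*$). Given this, for any $U, V \in \Ss_d$ we have $U^* = U$ and $V^* = V$, so
\[
(UV)^* = V^* U^* = VU.
\]
Applying $|A|_{\op} = |A^*|_{\op}$ with $A = UV$ then yields $|UV|_{\op} = |VU|_{\op}$, as required.

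The entire argument is one line once the norm-of-adjoint identity is in hand, so there is essentially no obstacle. If the authors wish to keep the proof self-contained, the only step worth including is a brief justification of $|A|_{\op} = |A^*|_{\op}$, which follows immediately from the duality formula
\[
|A|_{\op} = \sup_{|v|=|w|=1} \langle Av, w\rangle = \sup_{|v|=|w|=1} \langle v, A^*w\rangle = |A^*|_{\op}.
\]
After that, the proof reduces to the two-line computation above.
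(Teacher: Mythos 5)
Your proof is correct: since $U,V$ are self-adjoint, $(UV)^*=VU$, and the operator norm is invariant under taking adjoints, which your duality computation justifies. The paper states this lemma without proof, treating it as standard, and your argument is exactly the standard one it implicitly relies on.
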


As a corollary we have the following inequality:  let  \( A, B, C, D \in \Ss_d \) be such that
\( |Av| \lesssim |Cv| \) and \( |Bv| \lesssim |Dv| \) for all \( v \in \R^d \).  If we  apply Lemma~\ref{opNorm:equiv} twice, we get
\begin{equation}\label{opNorm-twomatrices}
    |AB|_{\op} \lesssim |CB|_{\op} = |BC|_{\op} \lesssim |DC|_{\op} = |CD|_{\op}.
\end{equation}

The following lemma is referred to as Cordes inequality~\cite[Lemma~5.1, p.~24]{MR890743}.

\begin{lemma} \label{lemma:cordes}
Given matrices $U,\,V \in \Sd$ and $0\leq s \leq 1$, $|U^sV^s|_{\op}\leq |UV|_{\op}^s$.
\end{lemma}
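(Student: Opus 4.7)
The plan is to derive Cordes' inequality from the \emph{L\"owner--Heinz inequality}, which asserts operator monotonicity of the power function $t\mapsto t^s$ for $0\le s\le 1$: if $A,B\in\Sd$ satisfy $A\le B$ in the L\"owner order, then $A^s\le B^s$. This is the only non-elementary input in the argument; being classical (see, e.g., Bhatia's \emph{Matrix Analysis}), we would cite it rather than reprove it.

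Granting L\"owner--Heinz, the argument will be short. First, we replace $V$ by $V+\varepsilon I$ and send $\varepsilon\downarrow 0^+$ (the spectral calculus used below is continuous in this limit), so we may assume that $V$ is invertible. Setting $c=|UV|_{\op}^2$, the definition of the operator norm, combined with the self-adjointness of $U$ and $V$, gives $VU^2V\le cI$, which, since $V^{-1}$ exists, rearranges to
\[
U^2\le c\,V^{-2}.
\]
Applying L\"owner--Heinz with exponent $s\in[0,1]$ then yields $U^{2s}\le c^s V^{-2s}$, and conjugating both sides by $V^s$ produces $V^sU^{2s}V^s\le c^s I$.

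To finish, we will use that $U^s$ and $V^s$ are self-adjoint, so $(U^sV^s)^*(U^sV^s)=V^sU^{2s}V^s$, and combine this with the identity $|A|_{\op}^2=|A^*A|_{\op}$ to obtain
\[
|U^sV^s|_{\op}^2 \;=\; |V^sU^{2s}V^s|_{\op} \;\le\; c^s \;=\; |UV|_{\op}^{2s}.
\]
Taking square roots yields the claim. The main obstacle, and indeed the only non-routine ingredient, is the L\"owner--Heinz step; everything else is straightforward algebraic manipulation of the $|\cdot|_{\op}$ identities, the elementary rearrangement $VU^2V\le cI \iff U^2\le cV^{-2}$, and the self-adjoint functional calculus.
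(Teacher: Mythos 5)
Your proof is correct. The paper does not prove this lemma at all --- it simply cites it as the Cordes inequality from the reference \cite{MR890743} --- so there is no in-paper argument to compare against. What you give is the standard derivation: from $|UV|_{\op}^2 = |VU^2V|_{\op}$ one gets $VU^2V \le cI$, hence $U^2 \le cV^{-2}$ after the $\varepsilon$-regularization of $V$, then L\"owner--Heinz gives $U^{2s} \le c^s V^{-2s}$, and conjugating back by $V^s$ together with $|A|_{\op}^2 = |A^*A|_{\op}$ finishes the proof. All steps check out, including the approximation $V \mapsto V + \varepsilon I$ (both sides of the claimed inequality are continuous in $V$ on $\Sd$, so the limit passes). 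The only external input is operator monotonicity of $t \mapsto t^s$ for $0 \le s \le 1$, which is appropriately flagged and cited rather than reproved.
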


It is well-known that a self-adjoint, positive semidefinite matrix can be diagonalized and the diagonal entries, its eigenvalues, are non-negative.  The same is true for matrix weight; moreover, the diagonalization can be done using measurable matrices; for a proof of the following result, see~\cite[Lemma~2.3.5]{MR1350650}.

\begin{lemma} \label{lemma:diagonal}
    Given a measurable matrix function $W$, there exists a measurable matrix function $U : \R^n \rightarrow \Md$ such that for almost every $x$, $U(x)$ is orthogonal and $U(x)W(x)U^t(x)$ is diagonal; denote it by $\diag(\lambda_1,\ldots,\lambda_d)$.  The non-negative functions $\lambda_i$ are the eigenvalues of $W$;
    moreover, if we denote the columns of $U$ by $u_i$, then each $u_i$ is  measurable and is the eigenfunction associated with $\lambda_i$.
\end{lemma}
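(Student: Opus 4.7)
The plan is to combine the pointwise spectral theorem for self-adjoint positive semidefinite matrices with a measurable selection argument, producing the ordered eigenvalues first and then inductively extracting an orthonormal eigenbasis in a measurable way.

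The first step is to show the ordered eigenvalues are measurable. For almost every $x$, $W(x) \in \Sd$, so its characteristic polynomial $p_x(\lambda) = \det(\lambda I - W(x))$ has $d$ real non-negative roots, counted with multiplicity. The coefficients of $p_x$ are polynomial expressions in the entries of $W(x)$, hence measurable functions of $x$. Since the map sending a monic polynomial with all real roots to its list of roots in decreasing order is continuous, the ordered eigenvalues $\lambda_1(x) \geq \cdots \geq \lambda_d(x) \geq 0$ are measurable.

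Next, I would construct orthonormal eigenvectors $u_1(x), \dots, u_d(x)$ inductively via the Kuratowski--Ryll-Nardzewski (KRN) selection theorem. At step $k$, given measurable orthonormal $u_1, \dots, u_{k-1}$ with $W u_j = \lambda_j u_j$, consider the set-valued map
\[ \Gamma_k(x) = \{ v \in S^{d-1} : W(x) v = \lambda_k(x) v,\ \langle v, u_j(x)\rangle = 0 \text{ for } j<k \}. \]
Its values are closed subsets of $S^{d-1}$, and its graph in $\R^n \times S^{d-1}$ is a measurable set because it is cut out by equalities of measurable functions. A measurable selector $u_k(x) \in \Gamma_k(x)$ then exists by KRN. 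Assembling $u_1, \dots, u_d$ as the columns of $U$ yields $W U = U \diag(\lambda_1, \dots, \lambda_d)$, and the required identity $U W U^t = \diag(\lambda_1, \dots, \lambda_d)$ follows by orthogonality (adjusting the transpose convention if necessary).

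The main obstacle is verifying that $\Gamma_k(x)$ is non-empty almost everywhere, which is delicate when eigenvalues are repeated. The key observation is this: let $i \leq k$ be the smallest index with $\lambda_i(x) = \lambda_k(x)$. By multiplicity, the $\lambda_k(x)$-eigenspace has dimension at least $k - i + 1$; the previously chosen $u_i(x), \dots, u_{k-1}(x)$ all lie in this eigenspace and span a subspace of dimension only $k - i$; and any $u_j$ with $j < i$ corresponds to a different eigenvalue and so is automatically orthogonal to the whole $\lambda_k(x)$-eigenspace. Therefore $\Gamma_k(x)$ always contains a unit vector, the inductive selection succeeds, and the required measurable diagonalization is obtained.
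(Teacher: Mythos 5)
Your argument is correct. Note that the paper does not prove this lemma at all: it simply cites \cite[Lemma~2.3.5]{MR1350650}, so there is no internal proof to compare against, and your self-contained measurable-selection argument is a legitimate way to supply one. The three ingredients are all sound: (i) measurability of the decreasingly ordered eigenvalues (which you could get even more directly from the fact that $A\mapsto\lambda_k(A)$ is $1$-Lipschitz on symmetric matrices by Weyl's inequality, bypassing the root-continuity discussion); (ii) the inductive Kuratowski--Ryll-Nardzewski selection from $\Gamma_k$; and (iii) the non-emptiness argument via equality of algebraic and geometric multiplicity for symmetric matrices, which is exactly the point where repeated eigenvalues could have caused trouble and which you handle correctly. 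One small technical remark on (ii): passing from measurability of the graph of $\Gamma_k$ to the existence of a measurable selector uses that $\Gamma_k$ is closed-valued in the Polish space $S^{d-1}$ and that the underlying measure space is complete (Lebesgue), via the equivalence of graph measurability and weak measurability for such multifunctions; it is worth saying this explicitly rather than invoking KRN as if graph measurability were its literal hypothesis. Finally, your parenthetical about the transpose convention is warranted: with $u_i$ as the \emph{columns} of $U$ one gets $U^tWU=\diag(\lambda_1,\ldots,\lambda_d)$, so the identity as written in the lemma requires either taking the $u_i$ to be the rows of $U$ or swapping $U$ and $U^t$; this is an inconsistency in the statement itself, not in your proof.
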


It follows from Lemma~\ref{lemma:diagonal} that
    \[ |W(x)|_{\op} = \max(\lambda_1(x),\ldots,\lambda_d(x)).  \]
We can also use it to define the powers of a matrix weight $W$:  for $r>0$, define $W^r$ by
\[ U^* W^r U  = \diag(\lambda_1^r,\ldots,\lambda_d^r). \]

\subsection*{Matrix $\A_p$ weights}
Given a basis $\B$, we define the matrix $\A_{p,\B}$ and $\A_{p,q,\B}$ classes.  For $1<p<\infty$,  a matrix weight $W$ is in $\A_{p,\B}$ if 
\[ [W]_{A_{p,\B}} = \sup_{B\in \B} \left( \avgint_B \left( \avgint_{B} |W(x) W^{-1}(y)|^{p'}_{\op} \,dy \right)^{\frac{p}{p'}} \,dx \right)^{\frac{1}{p}}  < \infty.
\]
Define  the classes $\A_{1,\B}$ and $\A_{\infty,\B}$ by
\[
[W]_{A_{1, \B}} = \sup_{B\in\B} \esssup_{x \in B} \avgint_B  |W^{-1}(x) W(y)|_{\op} \,dy < \infty,
\]
and
\[
[W]_{A_{\infty, \B}} = \sup_{B\in\B} \esssup_{x \in B} \avgint_B |W(x) W^{-1}(y)|_{\op} \,dy  < \infty.
\]

In the scalar case it is immediate that $w\in \A_{p,\B}$ if and only if $w^{-1}\in \A_{p',\B}$ and $[w]_{\A_{p,\B}}=[w^{-1}]_{\A_{p',\B}}$.  The same relationship, up to a constant is true, but the proof is more complicated.  To show this we introduce the concept of a reducing operator.  Given a set $E\subset \R^n$, $0<|E|<\infty$, a matrix weight $W$, $v\in \R^d$, and $1\leq p\leq <\infty$,
the quantity 
\[ \|Wv\|_{p,E} = \bigg(\avgint_E |W(x)v|^p\,dx\bigg)^{\frac{1}{p}} \]
defines a norm on $\R^d$.  Similarly, if $p=\infty$, $\|Wv\|_{\infty,E}= \||Wv|\chi_E\|_{L^\infty(\R^d)}$ defines a norm.  Then there exists a self-adjoint, positive-definite matrix $\W_E^p$ such that for all $v\in \R^d$, $\|Wv\|_{p,E}\approx |\W_E^pv|$, where the implicit constants depend only on $d$.  (This was proved for cubes in~\cite[Section~6]{bownik-cruz-uribe}; the same proof works for arbitrary sets.)  We denote the reducing operators associated to $W^{-1}$ by $\overline{\W}_E^p$.   We can characterize $\A_{p,\B}$ in terms of reducing operators.  The following result was proved in~\cite[Proposition~6.4]{bownik-cruz-uribe} for the basis of cubes, but the same proof works for any basis.

\begin{prop} \label{prop:reducing-op}
   Given a basis $\B$ and $1\leq p \leq \infty$, a matrix weight $W$ satisfies $W\in \A_{p,\B}$ if and only if 
   \begin{equation} \label{eqn:reducing-op1}
[W]_{\A_{p,\B}}^R = \sup_{B\in \B} |\W_B^p \overline{\W}_B^{p'}|_{\op} < \infty.  
\end{equation}
   Moreover, $[W]_{\A_{p,\B}}^R \approx [W]_{\A_{p,\B}}$, with implicit constants dependent only on $d$.  
   \end{prop}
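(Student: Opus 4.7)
The plan is to adapt the proof of \cite[Proposition~6.4]{bownik-cruz-uribe} to a general basis by verifying that no geometric property of cubes enters beyond $B$ being a measurable set of finite positive measure. Fixing $B \in \B$, the objective is to show, with constants depending only on $d$,
\[
|\W_B^p \overline{\W}_B^{p'}|_{\op} \approx \bigg( \avgint_B \bigg( \avgint_B |W(x) W^{-1}(y)|_{\op}^{p'} \, dy \bigg)^{p/p'} dx \bigg)^{1/p}
\]
for $1<p<\infty$, with the obvious modifications when $p \in \{1,\infty\}$ (the $L^{p'}$ inner average becomes an essential supremum, and so does the outer average when $p=\infty$). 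Taking the supremum over $B \in \B$ then yields $[W]_{\A_{p,\B}}^R \approx [W]_{\A_{p,\B}}$.

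The argument is a bookkeeping chain of four comparisons. First, apply Lemma~\ref{opNorm:equiv} to the standard orthonormal basis $\{e_i\}_{i=1}^d$ with exponent $p$ to get $|\W_B^p \overline{\W}_B^{p'}|_{\op}^p \approx \sum_i |\W_B^p \overline{\W}_B^{p'} e_i|^p$. Second, use the defining property $|\W_B^p u|^p \approx \avgint_B |W(x) u|^p\,dx$ on each summand, swap sum and integral, and reassemble via Lemma~\ref{opNorm:equiv} pointwise in $x$ to arrive at $\avgint_B |W(x) \overline{\W}_B^{p'}|_{\op}^p\,dx$. Third, invoke Lemma~\ref{SelfAdjointCommutes} together with the self-adjointness of $W(x)$ and of $\overline{\W}_B^{p'}$ to replace $|W(x) \overline{\W}_B^{p'}|_{\op}$ by $|\overline{\W}_B^{p'} W(x)|_{\op}$. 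Fourth, apply Lemma~\ref{opNorm:equiv} once more --- now at exponent $p'$ --- together with the defining property of $\overline{\W}_B^{p'}$ for $W^{-1}$, swap sum and integral, and reassemble to identify $|\overline{\W}_B^{p'} W(x)|_{\op}^{p'}$ with $\avgint_B |W^{-1}(y) W(x)|_{\op}^{p'}\,dy$. A last use of self-adjointness gives $|W^{-1}(y) W(x)|_{\op} = |W(x) W^{-1}(y)|_{\op}$, which closes the chain.

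For the endpoints $p = 1$ and $p = \infty$, the $L^{p'}$ inner average is replaced by an essential supremum, and the interchange of sum and integral at the relevant step is replaced by the comparison $\sum_i \esssup_x a_i(x) \approx \esssup_x \sum_i a_i(x)$, which is valid with constant depending only on $d$ for nonnegative $a_i$ (both sides are comparable to $\max_i \esssup_x a_i$). I do not anticipate a genuine obstacle; the only point requiring verification is that each ingredient --- the construction of reducing operators, Lemma~\ref{opNorm:equiv}, Lemma~\ref{SelfAdjointCommutes}, and the self-adjointness of matrix weights --- depends only on measurability and finiteness of $|B|$, not on the fact that $B$ is a cube, so the argument for cubes transfers verbatim to an arbitrary $B \in \B$.
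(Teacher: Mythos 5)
Your proposal is correct and matches the paper's approach exactly: the paper simply cites \cite[Proposition~6.4]{bownik-cruz-uribe} and asserts that the cube-case proof transfers verbatim, and your four-step chain (Lemma~\ref{opNorm:equiv} at exponent $p$, the defining property of $\W_B^p$, Lemma~\ref{SelfAdjointCommutes}, then Lemma~\ref{opNorm:equiv} at exponent $p'$ with the defining property of $\overline{\W}_B^{p'}$) is precisely that proof, together with the correct observation that only measurability and $0<|B|<\infty$ are used. The endpoint modification via $\sum_i \esssup_x a_i \approx \esssup_x \sum_i a_i$ (with constant $d$) is also the right fix.
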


As an immediate consequence of Proposition~\ref{prop:reducing-op} and Lemma~\ref{SelfAdjointCommutes} we have the following.

\begin{corollary} \label{cor:duality}
    Given a basis $\B$ and $1\leq p \leq \infty$, a matrix weight $W$ satisfies $W\in \A_{p,\B}$ if and only if $W^{-1}\in \A_{p',\B}$ and in this case $[W]_{\A_{p,\B}}\approx [W^{-1}]_{\A_{p',\B}}$.
\end{corollary}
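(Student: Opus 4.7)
The plan is to derive both directions simultaneously from the reducing-operator characterization in Proposition~\ref{prop:reducing-op}. Applied to $W$ with exponent $p$, this gives
\[
[W]_{\A_{p,\B}} \approx \sup_{B \in \B} |\W_B^p \overline{\W}_B^{p'}|_{\op},
\]
where $\W_B^p$ is the reducing operator for $W$ at exponent $p$ and $\overline{\W}_B^{p'}$ is the reducing operator for $W^{-1}$ at exponent $p'$, with implicit constants depending only on $d$.

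Next I would apply the same proposition to $W^{-1}$ with exponent $p'$. By the definition of the reducing operators preceding Proposition~\ref{prop:reducing-op}, the reducing operator for $W^{-1}$ at exponent $p'$ is $\overline{\W}_B^{p'}$, while the reducing operator for $(W^{-1})^{-1}=W$ at exponent $(p')'=p$ is $\W_B^p$. Hence
\[
[W^{-1}]_{\A_{p',\B}} \approx \sup_{B \in \B} |\overline{\W}_B^{p'} \W_B^p|_{\op},
\]
again with implicit constants depending only on $d$.

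The final step is to compare the two suprema. Since $\W_B^p$ and $\overline{\W}_B^{p'}$ both belong to $\Ss_d$ by construction (they are self-adjoint and positive definite), Lemma~\ref{SelfAdjointCommutes} gives
\[
|\W_B^p \overline{\W}_B^{p'}|_{\op} = |\overline{\W}_B^{p'} \W_B^p|_{\op}
\]
for every $B \in \B$. Taking the supremum over $B$ and chaining the two equivalences above yields $[W]_{\A_{p,\B}} \approx [W^{-1}]_{\A_{p',\B}}$, with constants depending only on $d$; in particular, one quantity is finite if and only if the other is.

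There is essentially no obstacle here: the role of the two cited results is precisely to convert the $\A_{p,\B}$ condition, which is asymmetric in $W$ and $W^{-1}$ as originally written, into a quantity that is manifestly symmetric once one commutes the two self-adjoint factors. The endpoint cases $p=1$ and $p=\infty$ (where the definition of $\A_{p,\B}$ uses essential suprema) require no separate treatment, as Proposition~\ref{prop:reducing-op} is stated uniformly over the full range $1 \leq p \leq \infty$.
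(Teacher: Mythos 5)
Your proof is correct and matches the paper's intended argument: the corollary is stated there as an immediate consequence of Proposition~\ref{prop:reducing-op} and Lemma~\ref{SelfAdjointCommutes}, which is exactly the combination you use. The observation that the reducing operators for $W^{-1}$ at exponent $p'$ swap roles with those for $W$ at exponent $p$, together with commuting the two self-adjoint factors in operator norm, is precisely the point.
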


   \medskip

   We now define the $\A_{p,q,\B}$ classes.
Given $1<p\le q<\infty$,  $W\in \A_{p,q,\B}$ if
\[
[W]_{A_{p,q, \B}} = \sup_{B\in\B} \left( \avgint_B \left( \avgint_{B} |W(x) W^{-1}(y)|^{p'}_{\op} \,dy \right)^{\frac{q}{p'}} \,dx \right)^{\frac{1}{q}}  < \infty.
\]
When $p=1$ and $q<\infty$, 
  $W \in \A_{1,q,\B}$ if
\[
[W]_{\A_{1, q, \B}} = \sup_{B\in \B} \esssup_{x \in B} \left( \avgint_B  |W^{-1}(x) W(y)|_{\op}^q \,dy \right)^{\frac1q} < \infty.
\]
Finally, when $p>1$ and $q=\infty$, $W\in \A_{p,\infty,\B}$ if
\[
[W]_{\A_{p, \infty, \B}} = \sup_{B\in \B} \esssup_{x \in B} \left( \avgint_B  |W(x) W^{-1}(y)|_{\op}^{p'} \,dy \right)^{\frac{1}{p'}} < \infty.
\]

We now consider the relationship between the  $\A_{r,\B}$  and the $\A_{p,q,\B}$ classes.  In the scalar case (i.e., when $d=1$), we have that $w\in \A_{p,q,\B}$ and $w^s\in \A_{r,\B}$ are equivalent conditions.  To see this, fix $1\leq p\leq q\leq \infty$,  define $s$ by $\frac{1}{p}-\frac{1}{q}=\frac{1}{s'}$, and define $r=\frac{q}{s}$.  We first assume that $1<p<q<\infty$.  Since $\frac{1}{p} - \frac{1}{q} = 1- \frac{1}{s}$, we have that $1<q<s$; moreover, we have that $\frac{1}{s} - \frac{1}{q} = 1- \frac{1}{p}=\frac{1}{p'}$,
which in turn implies that $ \frac{q}{q-s}=\frac{p'}{s}$. Thus, since $r'=\frac{q}{q-s}$, we  have that
$r' = \frac{p'}{s}$. Given this, the equivalence is immediate:
\begin{multline*} 
[w]_{\A_{p,q,\B}} 
= 
\sup_{B\in \B} \Bigg(\int_B w(x)^q\,dx\bigg)^{\frac{1}{q}}
\Bigg(\int_B w(x)^{-p'}\,dx\bigg)^{\frac{1}{p'}} \\
=
\sup_{B\in \B} \Bigg(\int_B w^s(x)^{\frac{q}{s}}\,dx\bigg)^{\frac{1}{q}}
\Bigg(\int_B w^s(x)^{-\frac{p'}{s}}\,dx\bigg)^{\frac{1}{p'}}
=
[w^s]_{\A_{r,\B}}^{\frac{1}{s}}.
\end{multline*}
A similar argument holds if $p=1$ or if $q=\infty$.  

In the matrix case, however, we can only prove one direction and a weaker converse.

\begin{prop}\label{prop:Mcond-Apq&Ar}
Fix $1\leq p\leq q\leq \infty$, and define $s$ by $\frac{1}{p}-\frac{1}{q}=\frac{1}{s'}$.  Let $r=\frac{q}{s}$.  If $W^s \in \A_{r,\B}$, then $W\in \A_{p,q,\B}$, and $[W]_{\A_{p,q,\B}}^s\leq [W^s]_{\A_{r,\B}}$.    Conversely, if $W\in \A_{p,q,\B}$, then $W\in \A_{r,\B}$ and $[W]_{\A_{r,\B}}\leq [W]_{\A_{p,q,\B}}$. 
\end{prop}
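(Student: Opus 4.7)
The plan is to verify both implications by proving pointwise inequalities on the integrands and then integrating, after first recording the algebraic consequences of the hypotheses. Writing $\tfrac{1}{s} = \tfrac{1}{p'} + \tfrac{1}{q}$, one checks that $r' = p'/s$ and $r/r' = q/p'$, while $s\ge 1$ forces $r' \le p'$ and $r \le q$. These four relations link the two weight quantities at the level of exponents, and the bulk of the proof is a routine calculation once they are in hand.

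For the forward direction I would apply Cordes's inequality (Lemma~\ref{lemma:cordes}) with exponent $1/s \in (0,1]$ to the self-adjoint positive semidefinite matrices $W^s(x)$ and $W^{-s}(y)$ to get
\[
|W(x)W^{-1}(y)|_{\op} = \bigl|(W^s(x))^{1/s}(W^{-s}(y))^{1/s}\bigr|_{\op} \leq |W^s(x)W^{-s}(y)|_{\op}^{1/s}.
\]
Raising to the $p'$-th power and using $p'/s = r'$ yields $|W(x)W^{-1}(y)|_{\op}^{p'} \leq |W^s(x)W^{-s}(y)|_{\op}^{r'}$. Integrating in $y$, raising to the outer exponent $q/p' = r/r'$, integrating in $x$, and taking the supremum over $B\in\B$ turns the right-hand side into $[W^s]_{\A_{r,\B}}^r$. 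Extracting $q$th roots and using $r/q = 1/s$ produces the claimed bound $[W]_{\A_{p,q,\B}}^s \leq [W^s]_{\A_{r,\B}}$.

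For the converse no Cordes is required; two applications of Jensen suffice. Since $r' \le p'$, Jensen in the inner variable gives
\[
\bigg(\avgint_B |W(x)W^{-1}(y)|_{\op}^{r'}\,dy\bigg)^{r/r'} \leq \bigg(\avgint_B |W(x)W^{-1}(y)|_{\op}^{p'}\,dy\bigg)^{r/p'},
\]
and then since $r/q \le 1$, Jensen in the outer variable moves the $(r/q)$-th power outside:
\[
\avgint_B \bigg(\avgint_B |W(x)W^{-1}(y)|_{\op}^{p'}\,dy\bigg)^{r/p'}\,dx \leq \bigg(\avgint_B \bigg(\avgint_B |W(x)W^{-1}(y)|_{\op}^{p'}\,dy\bigg)^{q/p'}\,dx\bigg)^{r/q}.
\]
Taking $\sup_B$ on both sides and extracting $r$th roots gives $[W]_{\A_{r,\B}} \leq [W]_{\A_{p,q,\B}}$. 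The endpoint cases $p=1$ and $q=\infty$ are handled identically, with an $\esssup$ replacing the corresponding outer integral average so that only one Jensen step is needed in the converse and Cordes alone suffices in the forward direction.

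I expect no serious obstacle; the main subtlety is just bookkeeping of exponents, and the qualitative gap between this proposition and its scalar counterpart (where $w\in\A_{p,q,\B}$ and $w^s\in\A_{r,\B}$ are equivalent) arises exactly because Cordes is in general a strict inequality, which is precisely what prevents the converse from yielding the stronger conclusion $W^s\in\A_{r,\B}$.
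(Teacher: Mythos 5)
Your argument is correct and follows the same route as the paper: Cordes's inequality with exponent $1/s$ for the forward implication, and H\"older/Jensen with exponent $s$ (equivalently, the monotonicity $r'\le p'$, $r\le q$ of the averages) for the converse, with the endpoint cases $p=1$ and $q=\infty$ treated by the same two steps after replacing the outer average by an essential supremum (and, for $p=1$, swapping the roles of $W$ and $W^{-1}$ inside the operator norm as in the definition of $\A_{1,q,\B}$). The exponent bookkeeping $r'=p'/s$, $r/r'=q/p'$, $r/q=1/s$ matches the paper's, so there is nothing to add.
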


\begin{proof} 
If $p=q$, then $s'=\infty$; thus, $s=1$ and in this case the result is trivially true.  Therefore, we assume that $p<q$.  Suppose first that  $1<p<q<\infty$.  If $W^s \in \A_{r,\B}$, then  by  Cordes inequality (Lemma~\ref{lemma:cordes}) and the relationship between $p$, $q$, and $r$,
\begin{multline*}
    [W^s]_{\A_{r, \B}} 
    = \sup_{B\in \B} \left( \avgint_B \left( \avgint_B |W^s(x) W^{-s}(y)|_{\op}^{r'} \,dy \right)^{\frac{r}{r'}} \,dx \right)^{\frac{1}{r}}\\
     \geq \sup_{B\in \B} \left( \avgint_B \left( \avgint_B |W(x) W^{-1}(y)|_{\text{op}}^{p'} \,dy \right)^{\frac{q}{p'}} \,dx \right)^{\frac{s}{q}}
     = [W]_{\A_{p, q, \B}}^s.
\end{multline*}
On the other hand, if $W \in \A_{p, q, \B}$,  then, since $1 < s < q$, by H\"older's  inequality with exponent $s$, we have that
\begin{align*}
    [W]_{\A_{r, \B}} 
     & = \sup_{B\in \B} \bigg( \avgint_B \left( \avgint_B  |W(x) W^{-1}(y)|_{\op}^{r'} \,dy \right)^{\frac{r}{r'}} \,dx \bigg)^{\frac{1}{r}} \\
    & = \sup_{B\in \B} \bigg( \avgint_B \left( \avgint_B  |W(x) W^{-1}(y)|_{\op}^{\frac{p'}{s}} \,dy \right)^{\frac{q}{p'}} \,dx \bigg)^{\frac{s}{q}} \\
    & \le \sup_{B\in \B} \bigg( \avgint_B \left( \avgint_B  |W(x) W^{-1}(y)|_{\op}^{p'} \,dy \right)^{\frac{q}{p'}} \,dx \bigg)^{\frac{1}{q}} \\
    & = [W]_{\A_{p, q, \B}}.
\end{align*}

Now suppose $p=1$ and $1<q<\infty$; in this case $q=s$.   If  $W^q \in \A_{1,\B}$, then again by Cordes inequality,
\begin{multline*}
    [W^q]_{\A_{1, \B}} 
    = \sup_{B\in \B} \esssup_{x \in B} \avgint_B |W^q(x) W^{-q}(y)|_{\op}\,dy\\
     \geq \sup_{B\in \B} \esssup_{x \in B} \avgint_B |W(x) W^{-1}(y)|_{\op}^q \,dy = [W]_{\A_{1,q,\B}}^q.
\end{multline*}
Conversely, if $W\in \A_{1,q}$, then by H\"older's inequality,
\begin{multline*}
[W]_{\A_{1,q,\B}} = \sup_{B\in \B} \esssup_{x \in B} \bigg(\avgint_B |W(x) W^{-1}(y)|_{\op}^q \,dy\bigg)^{\frac1q} \\
\geq \sup_{B\in \B} \esssup_{x \in B} \avgint_B |W(x) W^{-1}(y)|_{\op} \,dy = [W]_{\A_{1,\B}}. 
\end{multline*}

Finally, suppose $1 \leq p < \infty $ and $q=\infty$; in this case $s=p'$.   We can essentially repeat the above arguments when $p=1$ to show that  if $ W^{p'} \in \A_{\infty, \B} $, then $W \in \A_{p,\infty,\B}$, and if 
 $W\in \A_{p,\infty,\B}$, then $W\in \A_{\infty,\B}$.
\end{proof}

The converse implication that $W\in \A_{p,q,\B}$ implies that $W^s\in \A_{r,\B}$ is false for the basis of dyadic cubes.  The reason is, essentially, that the converse of the Cordes inequality is false in general.  

\begin{example} \label{example:no-reverse-cordes}
    Fix $1\leq p\leq q< \infty$, and define $s$ by $\frac{1}{p}-\frac{1}{q}=\frac{1}{s'}$.  Let $r=\frac{q}{s}$.  
    Then there exists a $2\times 2$ matrix such that $W\in \A_{p,q,\D}$ but $W^s\not\in \A_{r,\D}$.
\end{example}

\begin{proof}
    We will show that we can modify an example due to Kakaroumpas, {\em et al.}~\cite[Prop.~6.8]{KNV} to get our desired example.  Following this paper, we say that a matrix weight $V$ is in $A_{p,q,\D}$ if
    \[ [V]_{A_{p,q,\D}}
    = 
    \sup_{Q\in \D} \avgint_Q \bigg( \avgint_Q |V^{\frac{1}{q}}(x)V^{-\frac{1}{q}}(y)|_{\op}^{p'}\,dy\bigg)^{\frac{q}{p'}} < \infty. 
\]
It is immediate that the matrix weight $W=V^{\frac{1}{q}}$ is in $\A_{p,q,\D}$ if and only if $V\in A_{p,q,\D}$, and $[V]_{A_{p,q,\D}} = [W]_{\A_{p,q,\D}}^q$. In~\cite[Prop.~6.8]{KNV} they proved the following result.  Given $1<p<q<\infty$, there exists a $2\times 2$ matrix weight $V\in A_{p,q,\D}$ such that, given any $1<p_2\leq q_2<\infty$ with $p<p_2$ and satisfying 
\begin{equation} \label{eqn:knv1}
\frac{q}{p'}=\frac{q_2}{p_2'},
\end{equation}
$V\not \in A_{p_2,q_2,\D}$.  

To apply this result, let $p_2=q_2=r$.  Then we need to show that $p<p_2$ and~\eqref{eqn:knv1} holds.  Since $\frac{1}{p}-\frac{1}{q}=1-\frac{1}{s}$, a straightforward calculation shows that 
\begin{equation} \label{eqn:knv2}
    r=\frac{q}{s}=\frac{q}{p'}+1.
\end{equation}  
Therefore,
\[ \frac{q}{p'} = r-1 = \frac{q_2}{q_2'}= \frac{q_2}{p_2'}.  \]
This gives us~\eqref{eqn:knv1}.  It is immediate from the definition that $q_2<q$. But then from~\eqref{eqn:knv2} we get (using that $q_2=p_2=r$) that
\[ 1 = \frac{1}{p_2} + \frac{q}{q_2}\Big(1-\frac{1}{p}\Big) > \frac{1}{p_2} + 1-\frac{1}{p}. \]
If we rearrange terms we get $\frac{1}{p}>\frac{1}{p_2}$, and so $p<p_2$.  

Since $V\in A_{p,q,\D}$, if we let $W=V^{\frac{1}{q}}$, then $W\in \A_{p,q,\D}$.  On the other hand, since $V\not\in A_{p_2,q_2,\D}$, we can rewrite this condition as
\[ \sup_{Q\in \D} \avgint_Q \bigg( \avgint_Q 
|W^{\frac{q}{q_2}}(x)W^{-\frac{q}{q_2}}(y)|_{\op}^{p_2'}\,dy\bigg)^{\frac{q_2}{p_2'}}\,dx =\infty. 
\]
But this is equivalent to 
\[ [W^s]_{\A_{r,\D}}^r = \sup_{Q\in \D} \avgint_Q \bigg( \avgint_Q |W^s(x)W^{-s}(y)|_{\op}^{r'}\,dy\bigg)^{\frac{r}{r'}}\,dx =\infty. 
\]
This completes the proof.
\end{proof}

We conclude this section with a result that characterizes the relationship between 
 $\A_{p,q,\B}$ weights and $\A_{p,\B}$ weights for bases that satisfy a reverse H\"older inequality. We include this result partly because it motivates an open question related to Example~\ref{example:no-reverse-cordes}.  
 We first define the reverse H\"older inequality as adapted to scalar $\A_{p,\B}$ weights.

\begin{definition}
    Given a basis $\B$ and $1\leq p<\infty$, a scalar weight $w$ satisfies the reverse H\"older inequality with respect to $\B$ with exponent $s>1$ if
    \[ [w]_{RH_{p,s,\B}} = \sup_{B\in \B} 
    \bigg(\avgint_B w(y)^{sp}\,dy\bigg)^{\frac{1}{sp}}
    \bigg(\avgint_B w(y)^p\,dy\bigg)^{-\frac{1}{p}} < \infty.
 \]
 We denote this by $w\in RH_{p,s,\B}$.  We say that $w\in RH_{p,\infty,\B}$ if
 \[ [w]_{RH_{p,\infty,\B}} = \sup_{B\in \B} 
    \esssup_{x\in B} w(x)
    \bigg(\avgint_B w(y)^p\,dy\bigg)^{-\frac{1}{p}} < \infty.\]
\end{definition}

For a number of bases, if $w\in \A_{p,\B}$, $1\leq p \leq \infty$, then $w\in RH_{p,s,\B}$ for some $s>1$; the value of $s$ depends on $[w]_{\A_{p,\B}}$.  These include the basis $\Q$ of cubes and the basis $\D$ of dyadic cubes~\cite[Section~7.2]{duoandikoetxea01}, the basis $\Rdf$ of rectangles~\cite[Section~7]{MR3473651}, and the Zygmund basis~$\Zz$~\cite[p.~82]{MR864371}.  However, not every basis satisfies has this property:  see~\cite[Section~5]{MR3473651}.  The class $RH_\infty$ was introduced for the basis of cubes in~\cite{cruz-uribe-neugebauer95}; it is in some sense "dual" to the class $A_1$ and plays a role in the Jones factorization theorem for $A_p$ weights:  see~\cite{preprint-DCU}.

We now extend this definition to matrix weights.  To motivate our definition,  recall that if $W\in \A_{p,\Q}$, then $|W|_{\op}$ is in the scalar $\A_{p,\Q}$ class: see~\cite[Corollary~6.7]{bownik-cruz-uribe}.  Therefore, for some $s>1$, $|W|_{\op} \in RH_{p,s,\Q}$.  

\begin{definition}\label{defn_6.4}
    Given a matrix weight \( W \),  \( 1 \leq p < \infty \), and \( 1 < s \leq \infty \), we say that $W\in RH_{p,s,\B}$ if 
    $$
    [W]_{RH_{p,s,\B}} = \sup_{v \in \R^d\setminus\{0\}} \left[ |W v| \right]_{RH_{p,s,\B}} < \infty.
    $$
\end{definition}

With the reverse H\"older condition we can completely characterize the relationship between the $\A_{p,q,\B}$ condition and the $\A_{p,\B}$ condition.  The following result was  proved in~\cite[Lemma~6.5]{KNV} for the basis of cubes.  The proof for a general basis is essentially the same and so we omit the details. 

\begin{prop}\label{lemma_6.5}
    Let $W\in \Ss_d$, \( 1 \leq  p < q < \infty \) and $s=\frac{q}{p}$. Then the following statements hold:
    \begin{enumerate}[(i)]
        \item \( W \in \A_{p,\B} \cap RH_{p,s,\B} \) if and only if \( W \in \A_{p,q,\B} \); moreover, 
        \[ \max \Big\{ [W]_{A_{p,\B}}, [W]_{RH_{p,s,\B}} \Big\} 
        \lesssim [W]_{\A_{p,q,\B}} 
        \lesssim  [W]_{\A_{p,\B}} [W]_{RH_{p,s,\B}}.\]
        
        \item For $q=\infty$, $W \in \A_{p,\B} \cap RH_{p,\infty,\B}$ if and only if $W\in \A_{p,\infty,\B}$; moreover,\[ \max \left\{ [W]_{\A_{p,\B}}, [W]_{RH_{p,\infty,\B}} \right\} 
        \lesssim [W]_{\A_{p,\infty,\B}}^p \lesssim [W]_{RH_{p,\infty\,B}} [W]_{\A_{p,\B}}^p. \]
    \end{enumerate}
   The implicit constants only depend on $d$, $p$, and $q$.
\end{prop}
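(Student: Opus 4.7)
The plan is to adapt the reducing operator proof of \cite[Lemma~6.5]{KNV} to a general basis $\B$; the argument is entirely local to each $B \in \B$, so no structural property of $\B$ beyond the averages it supports enters. All estimates are expressed in terms of the reducing operators $\W_B^p$, $\overline{\W}_B^{p'}$ together with the characterization $[W]_{\A_{p,\B}} \approx \sup_B |\W_B^p \overline{\W}_B^{p'}|_{op}$ of Proposition~\ref{prop:reducing-op}.

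The easy direction of (i) splits into two parts. The bound $[W]_{\A_{p,\B}} \leq [W]_{\A_{p,q,\B}}$ follows at once from Jensen's inequality applied to the outer average, since $q \geq p$. For $[W]_{RH_{p,s,\B}} \lesssim [W]_{\A_{p,q,\B}}$, fix $v \in \R^d$ and $B \in \B$, write $|W(x)v| \leq |W(x) W^{-1}(y)|_{op}\,|W(y)v|$ for $y \in B$, average in $y$, and apply H\"older's inequality with exponents $p'$ and $p$ to get
\[
|W(x)v| \leq \left(\avgint_B |W(x) W^{-1}(y)|_{op}^{p'}\,dy\right)^{1/p'} \|Wv\|_{p,B}.
\]
Raising to the $q$-th power, averaging in $x$, and using $sp=q$ gives $\|Wv\|_{q,B} \leq C_B \|Wv\|_{p,B}$ with $C_B$ the local $\A_{p,q,\B}$ constant on $B$, which yields the reverse H\"older bound on taking suprema.

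The hard direction rests on the pointwise matrix estimate
\[
\left(\avgint_B |W(x) W^{-1}(y)|_{op}^{p'}\,dy\right)^{1/p'} \lesssim_d |W(x)\,\overline{\W}_B^{p'}|_{op},
\]
proved by factoring $W(x)W^{-1}(y) = W(x)\overline{\W}_B^{p'} \cdot (\overline{\W}_B^{p'})^{-1}W^{-1}(y)$, applying submultiplicativity of the operator norm, and using Lemmas~\ref{SelfAdjointCommutes} and~\ref{opNorm:equiv} together with the defining property of $\overline{\W}_B^{p'}$ to reduce $\avgint_B |W^{-1}(y)(\overline{\W}_B^{p'})^{-1}|_{op}^{p'}\,dy$ to a dimensional constant. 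Inserted into $[W]_{\A_{p,q,\B}}^q$, this leaves us to bound $\avgint_B |W(x)\,\overline{\W}_B^{p'}|_{op}^q\,dx$. Expanding via Lemma~\ref{opNorm:equiv} as $\sum_i |W(x) v_i|^q$ with $v_i = \overline{\W}_B^{p'} e_i$, applying the scalar reverse H\"older inequality coordinate-wise, and using $\sum_i a_i^{q/p} \leq (\sum_i a_i)^{q/p}$ (valid since $q/p \geq 1$) reduces the bound to $(\sum_i \|Wv_i\|_{p,B}^p)^{q/p} \approx_d |\W_B^p \overline{\W}_B^{p'}|_{op}^q \lesssim [W]_{\A_{p,\B}}^q$. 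Taking $q$-th roots completes (i).

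The endpoint (ii) follows by the same argument, with the outer $L^q$ average replaced by $\esssup_{x \in B}$, the scalar reverse H\"older inequality replaced by $\|Wv_i\|_{\infty,B} \leq [W]_{RH_{p,\infty,\B}} \|Wv_i\|_{p,B}$, and the power-mean step replaced by the trivial $\esssup_x \sum_i \leq \sum_i \esssup_x$. The main obstacle in both cases is the key pointwise matrix estimate above: it is the matrix replacement for the scalar identity $w(x)w^{-1}(y) = w(x)/w(y)$, and isolating the $y$-dependence through factorization via $\overline{\W}_B^{p'}$ is the step that makes everything else mechanical.
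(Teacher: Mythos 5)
The paper offers no proof of this proposition: it defers entirely to \cite[Lemma~6.5]{KNV} with the remark that the argument for a general basis is identical. Your reducing-operator argument is the intended one, and for $1<p<q<\infty$ it is correct: the easy direction is exactly as you describe, and in the hard direction the pointwise bound $\big(\avgint_B|W(x)W^{-1}(y)|_{\op}^{p'}\,dy\big)^{1/p'}\lesssim_d|W(x)\overline{\W}_B^{p'}|_{\op}$, followed by Lemma~\ref{opNorm:equiv} with $v_i=\overline{\W}_B^{p'}e_i$, the scalar reverse H\"older hypothesis applied to each $|Wv_i|$, the power-mean inequality, and Proposition~\ref{prop:reducing-op}, yields $[W]_{\A_{p,q,\B}}\lesssim[W]_{\A_{p,\B}}[W]_{RH_{p,s,\B}}$. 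The argument is purely local to each $B\in\B$, so nothing about the basis is used, which is precisely why the paper felt entitled to omit the proof.

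Two loose ends remain. First, part (i) includes $p=1$, where your computation does not apply verbatim: $[W]_{\A_{1,q,\B}}$ is defined as $\sup_B\esssup_{x}\big(\avgint_B|W^{-1}(x)W(y)|_{\op}^{q}\,dy\big)^{1/q}$, with the essential supremum outside and the factors $W^{-1}$, $W$ in the opposite order, so it is not the formal $p\to1$ limit of the expression you manipulate. The same ideas work --- factor through $\W_B^{1}$ instead of $\overline{\W}_B^{p'}$, apply the reverse H\"older hypothesis to $\avgint_B|W(y)(\W_B^{1})^{-1}e_i|^{q}\,dy$, and control $\esssup_x|W^{-1}(x)\W_B^{1}|_{\op}$ by $[W]_{\A_{1,\B}}$ via reducing operators --- but this case must be written out separately. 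Second, in (ii) ``the same argument'' gives $[W]_{\A_{p,\infty,\B}}\lesssim[W]_{RH_{p,\infty,\B}}[W]_{\A_{p,\B}}$, hence $[W]_{RH_{p,\infty,\B}}^{p}$ rather than the first power after raising to the $p$; you cannot reach the displayed exponent. This is a defect of the statement, not of your proof: already for $d=1$ one has, per set $B$, the identity $[w]_{\A_{p,\infty,\B}}=[w]_{RH_{p,\infty,\B}}[w]_{\A_{p,\B}}$ with the paper's normalization of $RH_{p,\infty,\B}$, so the claimed bound $[W]_{\A_{p,\infty,\B}}^{p}\lesssim[W]_{RH_{p,\infty,\B}}[W]_{\A_{p,\B}}^{p}$ fails whenever the reverse H\"older constant is large; the display is consistent only if $RH_{p,\infty,\B}$ is normalized with $\esssup_B w^{p}$ in place of $\esssup_B w$. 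Your argument proves the corrected version.
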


Given Proposition~\ref{lemma_6.5}, it is an open question whether there exists a condition analogous to the reverse H\"older inequality which, combined with the assumption that $W\in \A_{p,q,\B}$, would imply that $W^s\in \A_{r,\B}$.  Obviously, if we assume that $W$ satisfies a reverse Cordes inequality of the form
\[ |W^s(x)W^{-s}(y)|_{\op}\leq C|W(x)W^{-1}(y)|_{\op}^s, \]
where $C$ is independent of $x,\,y\in \R^n$,
then we could adapt the proof of Proposition~\ref{prop:Mcond-Apq&Ar} to prove this.  Such an assumption seems artificial, but we have not been able to find any more satisfactory condition that implies this.  One possibility is that some condition related to the matrix $A_\infty$ condition introduced by Volberg~\cite{Vol} would be sufficient.

\section{Convex-set valued operators defined on bases}
\label{section:tech-lemmas}

In this section we gather a number of definitions and results about convex-set valued functions and operators needed for the proof of our main result.  These are all drawn from~\cite{bownik-cruz-uribe} and we refer the reader there for proofs and more information.  In this paper everything was done working with cubes in $\R^n$.  In many cases, the proofs go over without change to an arbitrary basis, in which case we will just state the result.  However, if a new or modified proof is required, we will give it here.  

Hereafter, let $\cs$ be the collection of convex sets $K \subset \R^d$ that are closed and symmetric, and let $\bcs$ be the  convex sets  that are bounded, closed, and symmetric.  By a convex-set valued function we mean a map $F : \R^n \rightarrow \cs$.  The function $F$ is measurable if given any open set $U\subset \R^d$, $F^{-1}(U)=\{ x \in \R^n : F(x) \cap U \neq \emptyset\}$ is a Lebesgue measurable set.  For equivalent characterizations of measurability, see \cite[Theorem~3.2]{bownik-cruz-uribe}. It is  integrably bounded if there exists a non-negative function $k\in L^1(\R^n,\R)$ such that $F(x)\subset k(x)\BB$ for almost every $x\in \R^n$.  If this is true for $k\in L^1_{loc}(\R^n,\R)$ we say $F$ is locally integrably bounded.

Given a matrix weight $W$ and $1\leq p<\infty$, define the set $L^p_{\K}(\R^n,W)$ to consist of all convex-set valued functions $F : \R^n \rightarrow \cs$ such that 
\[ \|F\|_{L^p_{\K}(\R^n,W)} = \bigg(\int_{\R^n} |W(x)F(x)|^p\,dx \bigg)^{\frac{1}{p}} < \infty,  \]
where $|W(x)F(x)|= \sup\{ |W(x)v| : v \in F(x)\}$.  If $W=I_d$ (the $d\times d$ identity), then we will denote the space by $L^p_{\K}(\R^n)$.

\begin{definition}
    Given an operator $T$ that maps convex-set valued functions to convex-set valued functions, we say that $T$ is sublinear if given any $F,\,G  : \R^n \rightarrow \bcs$ and $\alpha \in R$, 
    \[ T(F+G)(x) \subset TF(x) + TG(x), \quad \text{ and } \quad T(\alpha F)(x) = \alpha TF(x).  \]
    We say that $T$ is monotone if, whenever $F(x)\subset G(x)$ for all $x\in \R^n$, $TF(x)\subset TG(x)$.  
\end{definition}

\begin{definition}\label{defn_8.3}
    Let $W$ be an invertible matrix weight. Given a measurable convex-set valued function $H: \R^n \to \K_{bcs}(\R^d)$, define the exhausting operator $N_{W}$ with respect to $W$ by
\[
N_{W} H(x) = \left| W(x) H(x) \right| W(x)\overline{\mathbf{B}}.
\]
\end{definition}

\begin{lemma}\cite[Lemma~8.4]{bownik-cruz-uribe}\label{lemma_8.4}
    Given  $1 < p < \infty $, a matrix $W$,  and $ H \in L^p_{\K}(\R^n, W) $, the the operator $N_W$ satisfies:
\begin{enumerate}
    \item $ H(x) \subset N_{W} H(x) $;
    
    \item $ N_{W} $ is an isometry: 
    $\| N_{W} H \|_{L^p_{\K}(\R^n, W)} = \| H \|_{L^p_{\K}(\R^n, W)}$;

    \item $N_{W} $ is sublinear and monotone.
\end{enumerate}
\end{lemma}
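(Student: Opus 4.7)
My plan is to verify each of the three properties by a direct computation from the pointwise definition of $N_W H(x)$. The argument is almost entirely a matter of unwinding notation and using basic facts about closed, symmetric, convex sets together with the operator norm.

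For (1), set $\lambda(x) = |W(x) H(x)|$. For every $v \in H(x)$ the definition of $\lambda(x)$ as a supremum gives $|W(x) v| \leq \lambda(x)$; inverting $W(x)$ then places $v$ inside the dilate by the factor $\lambda(x)$ of the shape set $W^{-1}(x)\overline{\BB}$, which is exactly $N_W H(x)$. (The case $\lambda(x) = 0$ forces $v = 0$ and is trivial.) For (2), the cancellation $W(x)\, W^{-1}(x) = I$ gives the pointwise identity
\[
|W(x)\, N_W H(x)| = \lambda(x) \sup_{u \in \overline{\BB}} |W(x)\, W^{-1}(x) u| = \lambda(x) \sup_{u \in \overline{\BB}} |u| = |W(x) H(x)|,
\]
and integrating the $p$-th power yields the isometry $\|N_W H\|_{L^p_{\K}(\R^n,W)} = \|H\|_{L^p_{\K}(\R^n,W)}$.

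For (3), monotonicity is immediate since $H(x) \subset G(x)$ gives $\lambda_H(x) \leq \lambda_G(x)$, while $N_W H(x)$ and $N_W G(x)$ share the common shape factor. Scalar homogeneity follows from $|W(x)\, \alpha H(x)| = |\alpha|\, \lambda(x)$ together with the symmetry of $\overline{\BB}$, which gives $|\alpha| K = \alpha K$ for $K = W^{-1}(x)\overline{\BB}$. For subadditivity, the triangle inequality applied to the supremum,
\[
|W(x)\,(F(x) + G(x))| \leq |W(x) F(x)| + |W(x) G(x)|,
\]
combined with the identity $(a+b) K = a K + b K$ for $a,b \geq 0$ and any convex $K$ containing the origin (applied to the common shape factor of $N_W F$, $N_W G$, and $N_W(F+G)$), yields $N_W(F+G)(x) \subset N_W F(x) + N_W G(x)$. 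The proof presents no genuine obstacle; the only slightly nontrivial ingredient is the convexity identity $(a+b)K = aK + bK$, which is a standard exercise once one observes that $0 \in K$.
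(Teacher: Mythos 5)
Your verification is correct, and it is the standard argument; the paper itself gives no proof of this lemma (it is quoted from \cite[Lemma~8.4]{bownik-cruz-uribe}), so there is nothing to diverge from. One point worth making explicit: you read the exhausting operator as $N_WH(x)=|W(x)H(x)|\,W^{-1}(x)\overline{\mathbf B}$, whereas Definition~\ref{defn_8.3} as printed has $W(x)\overline{\mathbf B}$; your reading is the correct one (it matches the usage later in the paper and is the only one for which (1) and (2) hold), but you should note that you are correcting a typo rather than silently substituting. Two small cosmetic remarks: the identity $(a+b)K=aK+bK$ for $a,b\ge 0$ requires only convexity of $K$, not $0\in K$ (it is $aK\subset bK$ for $a\le b$, used in your monotonicity step, that needs $0\in K$); and strictly speaking one should also observe that $N_WH$ is a measurable $\cs$-valued map (immediate from measurability of $x\mapsto|W(x)H(x)|$ and of $W^{-1}$) so that the isometry statement is about a genuine element of $L^p_{\K}(\R^n,W)$.
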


\medskip

We now define the convex-set valued maximal operator  with respect to a general basis.  Recall that the Aumann integral of a locally integrably bounded function $F : \R^n \rightarrow \bcs$ on a bounded set $\Omega$ is the convex set defined by
\[ \int_\Omega F(y)\,dy = \bigg\{ \int_\Omega f(y)\,dy : f \in S^1(\Omega) \bigg\},\]
where $S^1(\Omega)$ is the collection of all integrable selection functions of $F$: that is, vector-valued functions $f\in L^1(\Omega)$ such that $f(y)\in F(y)$ for all $y\in \Omega$. 

\begin{definition} \label{defn:convex-set-max-op}
    Given a basis $\B$, and a locally integrably bounded function $F : \R^n \rightarrow \bcs$, define the maximal operator $M_\B$ by
    \[ M_\B F(x) = \clconv\bigg(\bigcup_{B\in \B} \avgint_B F(y)\,dy \cdot \chi_B(x) \bigg). \]
    Here $\clconv(E)$ is the closed, convex hull of $E$.
\end{definition}

It is immediate that $M_\B F : \R^n \rightarrow \cs$.  It is a measurable function as well.  This requires a different proof than the one given in \cite{bownik-cruz-uribe} for the basis of cubes, since in that case it is possible to replace the collection of all cubes by cubes whose vertices have rational coordinates. For the proof we need a lemma.

\begin{lemma} \label{lemma:conv-cl}
Given a set $G \subset \R^d$, $\clconv(\overline{G})=\clconv(G)$.
\end{lemma}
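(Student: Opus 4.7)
The plan is to prove the two inclusions separately, both of which follow from the monotonicity of $\clconv$ together with the fact that $\clconv(G)$ is, by definition, a closed set containing $G$.

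First I would observe the ``easy'' direction: since $G \subset \overline{G}$ and the operator $\clconv$ is monotone (taking the convex hull and then closure preserves inclusions), we obtain $\clconv(G) \subset \clconv(\overline{G})$ immediately.

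For the reverse inclusion, the key observation is that $\clconv(G)$ is by definition a closed subset of $\R^d$ that contains $G$. Since $\overline{G}$ is the smallest closed set containing $G$, this forces $\overline{G} \subset \clconv(G)$. Now applying $\clconv$ to both sides and using monotonicity together with the idempotence property $\clconv(\clconv(G)) = \clconv(G)$ (which holds because $\clconv(G)$ is already closed and convex), we conclude $\clconv(\overline{G}) \subset \clconv(G)$.

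There is no substantial obstacle here; the entire argument is a routine unpacking of definitions. The only point worth emphasizing is the reliance on the idempotence of $\clconv$ and on the characterization of $\overline{G}$ as the intersection of all closed sets containing $G$, both of which are standard facts. The lemma will then be invoked in the sequel to justify replacing a closed hull over a set by a closed hull over its closure, which is convenient when dealing with countable dense subfamilies of the basis $\B$.
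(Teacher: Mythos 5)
Your proof is correct, and it takes a genuinely different (and slicker) route than the paper's. The paper proves the nontrivial inclusion $\clconv(\overline{G})\subset\clconv(G)$ by a direct $\epsilon$-approximation: given $z\in\clconv(\overline{G})$, it approximates $z$ by a finite convex combination $\sum a_i w_i$ with $w_i\in\overline{G}$, then approximates each $w_i$ by some $v_i\in G$, and checks that $\sum a_i v_i\in\conv(G)$ is within $\epsilon$ of $z$. You instead exploit the abstract closure-operator properties of $\clconv$: since $\clconv(G)$ is a closed set containing $G$, minimality of the closure gives $\overline{G}\subset\clconv(G)$, and then monotonicity plus idempotence ($\clconv(K)=K$ for $K$ closed and convex) yield $\clconv(\overline{G})\subset\clconv(\clconv(G))=\clconv(G)$. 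Your argument is shorter and avoids any metric estimates, at the cost of invoking idempotence, which itself requires knowing that $\clconv(G)$ is convex as well as closed (immediate whether one defines $\clconv$ as the closure of the convex hull or as the intersection of all closed convex supersets, since these agree in $\R^d$). The paper's hands-on argument, by contrast, uses only the definition of $\clconv(G)$ as the closure of $\conv(G)$ and elementary triangle-inequality estimates. Both are complete; yours is the more conceptual of the two.
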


\begin{proof}
    It is immediate that $\clconv(G)\subset \clconv(\overline{G})$, so we only need to prove the reverse inclusion.  Fix $z\in \clconv(\overline{G})$ and fix $\epsilon>0$.  Then there exists $w\in \conv(\overline{G})$ such that $|z-w|<\epsilon/2$.  Moreover, there exist points $w_1,\ldots,w_k \in \overline{G}$ and $a_i \geq 0$, $\sum a_i =1$, such that
    \[ w = \sum_{i=1}^k a_i w_i. \]
    But then for each $i$ there exist points $v_i \in G$ such that $|v_i-w_i|<\epsilon/2$.  Therefore, if we define
    \[ v = \sum_{i=1} a_i v_i, \]
    we have that $v\in \conv(G)$ and $|z-v| \leq |z-w|+|w-v| < \epsilon$.  Since this is true for every $\epsilon$, $z\in \clconv(G)$, as desired.
\end{proof}

\begin{theorem} \label{thm:measurable}
    Given a basis $\B$, and a locally integrably bounded function $F : \R^n \rightarrow \bcs$, $M_\B$ is a measurable function.
\end{theorem}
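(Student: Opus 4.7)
My plan is to produce a Castaing-type representation $M_\B F(x) = \overline{\{g_j(x) : j \in \N\}}$ by countably many measurable selections $g_j : \R^n \to \R^d$; measurability of the multifunction $M_\B F$ then follows immediately from the identity $\{x : M_\B F(x) \cap V \neq \emptyset\} = \bigcup_j g_j^{-1}(V)$, valid for every open $V \subset \R^d$. The obstacle compared with the cube case is that $\B$ can be uncountable and need not admit a countable dense subfamily, so I cannot simply reindex $\B$ by rational labels. Instead I will exploit that each $B \in \B$ is open to sidestep the uncountable indexing on the $\R^n$ side, and invoke Lemma~\ref{lemma:conv-cl} to promote a countable representation of a closed set into one of its closed convex hull on the $\R^d$ side.

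Set $G(x) = \bigcup_{B \in \B,\, x \in B} \avgint_B F(y)\,dy$ (with $G(x) = \{0\}$ if no element of $\B$ contains $x$; since each $F(y)$ is symmetric we have $0 \in \avgint_B F$, so $G(x)$ is nonempty everywhere), so that $M_\B F(x) = \clconv(G(x))$. First I would show that the closed-valued multifunction $x \mapsto \overline{G(x)}$ is measurable. For any open $V \subset \R^d$, openness of $V$ gives $\overline{G(x)} \cap V \neq \emptyset$ if and only if $G(x) \cap V \neq \emptyset$, which in turn holds if and only if there exists $B \in \B$ with $x \in B$ and $\avgint_B F \cap V \neq \emptyset$. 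Hence
\[
\{x : \overline{G(x)} \cap V \neq \emptyset\} = \bigcup_{\substack{B \in \B \\ \avgint_B F \,\cap\, V \neq \emptyset}} B,
\]
which is a union of open sets in $\R^n$. Next, I would invoke Castaing's representation theorem for measurable closed-valued multifunctions to obtain measurable selections $f_k : \R^n \to \R^d$ with $\overline{G(x)} = \overline{\{f_k(x) : k \in \N\}}$ for every $x$, and let $\{g_j\}_j$ enumerate all rational convex combinations of finitely many $f_k$'s; each $g_j$ is measurable.

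Two applications of Lemma~\ref{lemma:conv-cl} then close the argument:
\begin{multline*}
M_\B F(x) = \clconv(G(x)) = \clconv(\overline{G(x)}) = \clconv(\overline{\{f_k(x)\}_k}) \\
= \clconv(\{f_k(x)\}_k) = \overline{\{g_j(x) : j \in \N\}},
\end{multline*}
where the final equality uses density of rational convex combinations inside $\conv(\{f_k(x)\}_k)$. Since each $g_j$ is measurable, the preimage identity in the first paragraph, applied now to $M_\B F$ itself, yields the desired measurability.

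The hardest step is the first: measurability of $\overline{G}$ when $\B$ is uncountable. This is exactly where openness of the basis sets is indispensable, and where the argument departs from the cube proof of~\cite{bownik-cruz-uribe}. A complementary subtlety is that Castaing's theorem a priori gives a countable representation only of the closed (but generally non-convex) set $\overline{G(x)}$; Lemma~\ref{lemma:conv-cl} is precisely what allows me to promote this to a representation of the closed convex hull $\clconv(G(x)) = M_\B F(x)$ without enlarging the countable family.
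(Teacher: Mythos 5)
Your proof is correct, and it shares the paper's overall skeleton---pass from $G(x)=\bigcup_{B\ni x}\avgint_B F(y)\,dy$ to its closure $\overline{G}$, and use Lemma~\ref{lemma:conv-cl} to identify $\clconv\big(\overline{G}(x)\big)$ with $M_\B F(x)$---but it differs in how the two measurability steps are handled, and in both places your version is the more explicit one. The paper's proof invokes \cite[Theorem~8.2.2]{MR2458436} for the measurability of $x\mapsto \clconv\big(\overline{G}(x)\big)$; that theorem takes as input a \emph{measurable} closed-valued map, and the write-up in the paper does not separately verify that $\overline{G}$ is measurable, which is exactly where the uncountability of $\B$ matters and where the rational-vertex reduction used for cubes is unavailable. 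Your observation that for open $V\subset\R^d$ one has $\{x:\overline{G(x)}\cap V\neq\emptyset\}=\{x:G(x)\cap V\neq\emptyset\}$, and that the latter is a union of (possibly uncountably many) open basis sets and hence open, supplies precisely that verification and is the right use of the standing hypothesis that the sets in $\B$ are open. Your second stage---a Castaing representation of $\overline{G}$ followed by rational convex combinations and Lemma~\ref{lemma:conv-cl}---is an elementary, self-contained substitute for the cited theorem on closed convex hulls of measurable maps; either route works once measurability of $\overline{G}$ is in hand. One small bookkeeping point: since $0\in\avgint_B F(y)\,dy$ for every $B$ (and $G(x)=\{0\}$ off $\Omega=\bigcup_{B\in\B}B$), when $0\in V$ the left-hand side of your displayed identity is all of $\R^n$ while the right-hand side is only $\Omega$; the set is still measurable because $\R^n\setminus\Omega$ is closed, so nothing breaks, but the identity should be stated with that case distinguished.
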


\begin{proof}
    To prove this, we need the following result from~\cite[Theorem~8.2.2]{MR2458436}:  if $\overline{G}$ is a closed-set valued function defined on $\R^n$,  then the map $H : \R^n \rightarrow \cs$ defined by $H(x) = \clconv \big(\overline{G}(x)\big)$,  is measurable.  Define $\overline{G}(x) = \clconv (G(x))$, where
    \[ G(x) = \bigg(\bigcup_{B\in \B} \avgint_B F(y)\,dy \cdot \chi_B(x) \bigg). \]
    Then we would be done if  $H(x)=M_\B F(x)$.  But this follows immediately from Lemma~\ref{lemma:conv-cl}.
\end{proof}

\begin{lemma} \cite[Lemma~5.5]{bownik-cruz-uribe} \label{lemma_5.5}
    Given a basis $\B$, the convex-set valued maximal operator $M_\B$ is sublinear and monotone.
\end{lemma}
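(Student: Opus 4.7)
The plan is to verify each of the three assertions—monotonicity, positive homogeneity, and subadditivity—directly from the definition
\[
 M_\B F(x) = \clconv\bigg(\bigcup_{B\in \B} \avgint_B F(y)\,dy \cdot \chi_B(x) \bigg),
\]
by reducing each to the corresponding property of the Aumann integral and then propagating it through the union and closed convex hull, neither of which can destroy an inclusion. Since the proof for the basis of cubes in~\cite{bownik-cruz-uribe} never uses any geometric feature of cubes beyond their membership in the indexing family, I expect only cosmetic changes to be required.

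For monotonicity, suppose $F(x)\subset G(x)$ for a.e.\ $x$. Every integrable selection of $F$ is also an integrable selection of $G$, so $\int_B F(y)\,dy\subset \int_B G(y)\,dy$ for every $B\in\B$. Dividing by $|B|$, multiplying pointwise by $\chi_B(x)$, taking the union over $B\in\B$, and finally taking the closed convex hull all preserve inclusions, giving $M_\B F(x)\subset M_\B G(x)$.

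For positive (and negative) homogeneity, the Aumann integral satisfies $\int_B(\alpha F)=\alpha\int_B F$ for every $\alpha\in\R$ (for $\alpha=0$ we use the convention $0\cdot K=\{0\}$, which is consistent since $F(x)\in\bcs$ contains the origin by symmetry). Both the union and the operation $\clconv$ commute with multiplication by a fixed scalar, so $M_\B(\alpha F)(x)=\alpha\,M_\B F(x)$.

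For subadditivity, the key input is the set-valued Jensen-type inclusion $\int_B(F+G)\subset \int_B F+\int_B G$, which follows from the definition of the Aumann integral by decomposing any integrable selection of $F+G$ into selections of $F$ and $G$. Hence for any $B\in\B$ and $x\in B$,
\[
\avgint_B (F+G)(y)\,dy\cdot\chi_B(x)\;\subset\; \avgint_B F\cdot\chi_B(x) + \avgint_B G\cdot\chi_B(x)\;\subset\; M_\B F(x)+M_\B G(x).
\]
Taking the union over $B\in\B$ and then the closed convex hull yields the desired inclusion $M_\B(F+G)(x)\subset M_\B F(x)+M_\B G(x)$.

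The only subtle point is the last step: the Minkowski sum of two unbounded closed convex sets need not be closed, so one must interpret the ambient notion of sum on $\cs$ as the closed sum (or, equivalently, take closure on the right-hand side before applying $\clconv$ on the left). This is exactly the convention adopted in~\cite{bownik-cruz-uribe}, and under it the above chain of inclusions is legitimate. Apart from this bookkeeping, the argument is entirely routine, and I expect no difficulty beyond verifying that each property of the Aumann integral passes cleanly through the union-and-closed-convex-hull construction.
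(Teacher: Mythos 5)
Your proof is correct and is essentially the argument the paper relies on: the paper does not reprove this lemma but cites \cite[Lemma~5.5]{bownik-cruz-uribe} and observes that the cube-based proof transfers verbatim to a general basis, which is exactly your reduction of monotonicity, homogeneity, and subadditivity to the corresponding properties of the Aumann integral, propagated through the union and $\clconv$. The only step you compress is the decomposition of an integrable selection of $F+G$ into selections of $F$ and $G$, which rests on a standard measurable selection theorem; and you correctly flag the closed-Minkowski-sum convention needed when the images of $M_\B F$ are unbounded.
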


The proof of the following result is the same as the proof of \cite[Theorem~6.9]{bownik-cruz-uribe}, except that instead of using the boundedness of the Christ-Goldberg maximal operator, we use the assumption that $\B$ is a Muckenhoupt basis.

\begin{theorem}\label{thm_6.9}
  Let $\B$ be a Muckenhoupt basis.  Fix $1<p\leq \infty$   Given a matrix $ W \in A_{p,\B} $, $ 1 < p < \infty $, the convex-set valued maximal operator $ M_\B: L^p_{\K}(\R^n, W) \to L^p_{\K}(\R^n, W) $ is bounded:  
  \[ \|M_\B F\|_{L^p_{\K}(\R^n, W)} \leq C(d)C_\B([W]_{\A_p})\|MF\|_{L^p_{\K}(\R^n, W)}. \]
\end{theorem}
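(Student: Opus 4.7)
The plan is to follow the proof of \cite[Theorem~6.9]{bownik-cruz-uribe} with the basis of cubes replaced by the general basis $\B$. In the cube argument the only substantive analytic input is the Isralowitz--Moen boundedness of the Christ--Goldberg operator $M_{W,\Q}$; here, by hypothesis, the analogous bound for $M_{W,\B}$ is precisely the matrix Muckenhoupt property of $\B$, so the same argument carries through with the constant $C_\B([W]_{\A_{p,\B}}) = K_p([W]_{\A_{p,\B}})$.

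The first step is the pointwise reduction
\[
|W(x) M_\B F(x)| \;\le\; \sup_{B \in \B,\, B \ni x} \avgint_B |W(x) F(y)|\,dy.
\]
Since $v \mapsto |W(x) v|$ is a seminorm on $\R^d$, its supremum over a closed convex hull equals its supremum over the generating set, and Definition~\ref{defn:convex-set-max-op} therefore gives $|W(x) M_\B F(x)| = \sup_{B \ni x} |W(x) \avgint_B F(y)\,dy|$. For each such $B$, the Aumann integral together with a measurable selection argument (choosing $f(y) \in F(y)$ that pointwise maximizes $|W(x) f(y)|$) yields $|W(x) \avgint_B F(y)\,dy| \le \avgint_B |W(x) F(y)|\,dy$.

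Next, I would convert the right-hand side into an estimate involving $M_{W,\B}$ applied to a vector-valued function. Set $h(y) := |W(y) F(y)|$; then $h \in L^p(\R^n)$ with $\|h\|_{L^p(\R^n)} = \|F\|_{L^p_{\K}(\R^n, W)}$. The inclusion $F(y) \subset h(y) W^{-1}(y)\overline{\BB}$ gives the key bound
\[
|W(x) F(y)| \;\le\; h(y)\,|W(x) W^{-1}(y)|_{\op},
\]
and Lemma~\ref{opNorm:equiv} applied with the standard orthonormal basis and $r=1$ gives $|W(x) W^{-1}(y)|_{\op} \lesssim_d \sum_{i=1}^d |W(x) W^{-1}(y) e_i|$. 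Since $h$ is a non-negative scalar, $h(y)\,|W(x) W^{-1}(y) e_i| = |W(x) W^{-1}(y)(h(y) e_i)|$, so taking the supremum over $B \ni x$ produces the pointwise control
\[
|W(x) M_\B F(x)| \;\lesssim_d\; \sum_{i=1}^d M_{W,\B}(h\,e_i)(x).
\]
Taking $L^p(\R^n)$-norms and applying the matrix Muckenhoupt hypothesis term-by-term, noting that $|h(y)\,e_i| = h(y)$ so that the output is a constant multiple of $\|h\|_{L^p(\R^n)} = \|F\|_{L^p_{\K}(\R^n, W)}$, completes the estimate.

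The main technical obstacle is in the first step: verifying measurability of $M_\B F$ (which is exactly the content of Theorem~\ref{thm:measurable} above) and rigorously executing the Aumann integral and selection manipulations for an arbitrary basis $\B$. These were carried out for cubes in \cite{bownik-cruz-uribe}, and because they rely only on the openness of the sets in $\B$ and on standard measurable selection theorems, they transfer without modification to the general setting.
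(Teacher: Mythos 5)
Your proposal is correct and is exactly the paper's argument: the paper simply states that the proof of \cite[Theorem~6.9]{bownik-cruz-uribe} goes through verbatim once the Isralowitz--Moen bound for $M_{W,\Q}$ is replaced by the matrix Muckenhoupt hypothesis for $M_{W,\B}$, which is precisely the reduction you carry out (pointwise domination of $|W(x)M_\B F(x)|$ by $\sum_i M_{W,\B}(h\,e_i)(x)$ with $h=|WF|$, then term-by-term application of $K_p$). The only cosmetic remark is that the $\|MF\|$ on the right-hand side of the stated inequality is a typo for $\|F\|$, which is what your argument (correctly) bounds by.
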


\begin{remark}
If $ \B $ is $ \mathcal{Q} $, the basis of cubes in $\R^n$, then by~\cite[Theorem~6.9]{bownik-cruz-uribe},
\[
\| M F \|_{L^p_{\K}(\R^n, W)} \leq C(n, d, p) [W]_{\A_p}^{p'} \| F \|_{L^p_{\K}(\R^n, W)}.
\]
\end{remark}


The following theorem is proved in \cite[Theorem~7.6,Lemma~8.5]{bownik-cruz-uribe}.
\begin{theorem}\label{thm_7.6}
   Given a matrix weight and  $1 \leq p < \infty$, suppose $T$ is a convex-set valued operator with the following properties:
\begin{enumerate}
    \item $T : L^p_\K(\R^n, W) \to L^p_\K(\R^n, W)$ with norm $\|T\|_{L^p_\K(\R^n, W)}$;
    \item $T$ is sublinear and monotone in the sense of Lemma \ref{lemma_5.5}.
\end{enumerate}

Given $G \in L^p_\K(\R^n, G)$, define
\begin{equation}
    SG(x) = \sum_{k=0}^\infty 2^{-k} \|T\|_{L^p_\K(\R^n, W)}^{-k} T^k G(x),
\end{equation}
where $T^k G = T \circ T \circ \cdots \circ T G$ for $k \geq 1$ and $T^0 G(x) = G(x)$. Then this series converges in $L^p_\K(\R^n, W)$ and $SG : \R^n \to \K_{bcs}(\R^d)$ is a measurable mapping. Moreover, it has the following properties:
\begin{enumerate}
    \item $G(x) \subset SG(x)$;
    \item $\|SG\|_{L^p_\K(\R^n, \rho)} \leq 2 \|G\|_{L^p_\K(\R^n, W)}$;
    \item $T(SG)(x) \subset 2 \|T\|_{L^p_\K(\R^n, W)} SG(x)$.
\end{enumerate}
Moreover, if $TG$ is an ellipsoid valued function whenever $G$ is, then $SG$ is also ellipsoid-valued.
\end{theorem}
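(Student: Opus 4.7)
The proof would adapt the classical Rubio de Francia iteration to the convex-set-valued setting. I would first form the partial sums
\[
S_N G(x) = \sum_{k=0}^{N} 2^{-k}\|T\|_{L^p_\K(\R^n,W)}^{-k}\, T^k G(x),
\]
where the sum denotes Minkowski addition in $\cs$. Because each $T^k G(x)$ is symmetric and convex, it contains the origin, which forces $S_N G(x)\subset S_{N+1}G(x)$. This monotonicity lets me define $SG(x):=\overline{\bigcup_{N\ge 0}S_NG(x)}$ pointwise; the resulting function is closed, convex and symmetric, and its measurability follows from the measurability of each finite Minkowski sum $S_N G$ (a standard fact for finitely many measurable closed-convex-valued functions) together with the fact that monotone limits of measurable closed-convex-valued functions are measurable.

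Properties (1) and (2) would follow quickly. Property (1) is immediate since $G=T^0G\subset S_0G\subset SG$. For (2), the pointwise inequality $|W(x)(F(x)+H(x))|\le|W(x)F(x)|+|W(x)H(x)|$, combined with the scalar Minkowski inequality in $L^p$, gives
\[
\|S_NG\|_{L^p_\K(\R^n,W)}\le \sum_{k=0}^N 2^{-k}\|T\|_{L^p_\K(\R^n,W)}^{-k}\|T^kG\|_{L^p_\K(\R^n,W)}\le 2\|G\|_{L^p_\K(\R^n,W)},
\]
where the second inequality iterates $\|T^kG\|\le\|T\|^k\|G\|$, valid by assumption~(1) of the theorem. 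Since $|W(x)S_NG(x)|\nearrow|W(x)SG(x)|$ pointwise (as the supremum of $|W(x)\cdot|$ over an increasing family of sets), the monotone convergence theorem transfers this bound to $SG$.

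For property (3), sublinearity of $T$ applied inductively to the finite Minkowski sum---using $T(\alpha F)=\alpha TF$ and $T(F+H)\subset TF+TH$---yields
\[
T(S_N G)(x)\subset \sum_{k=0}^{N} 2^{-k}\|T\|^{-k}T^{k+1}G(x)=2\|T\|\sum_{k=1}^{N+1}2^{-k}\|T\|^{-k}T^kG(x)\subset 2\|T\|\,S_{N+1}G(x)\subset 2\|T\|\,SG(x).
\]
The main obstacle will be promoting this partial-sum inclusion to the pointwise statement $T(SG)(x)\subset 2\|T\|SG(x)$: monotonicity of $T$ only gives $T(S_NG)\subset T(SG)$, which is the wrong direction. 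To close the gap I would use that $S_NG\to SG$ in $L^p_\K$-norm together with the $L^p_\K$-boundedness of $T$; a Fatou-type semicontinuity argument for monotone increasing families of symmetric convex sets, combined with the sublinear splitting $T(SG)\subset T(S_NG)+T(H_N)$ where $H_N$ is a Minkowski ``remainder'' with $\|H_N\|_{L^p_\K(\R^n,W)}\to 0$, then yields the desired inclusion almost everywhere.

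Finally, for the ellipsoid-preservation assertion, iterating the hypothesis shows that each $T^kG(x)$ is an ellipsoid. Since Minkowski sums of ellipsoids are not in general ellipsoids, this part would require a separate construction adapted to the ellipsoid setting---most naturally by composing with the exhausting operator $N_W$ of Lemma~\ref{lemma_8.4}, which always returns an ellipsoid and preserves the $L^p_\K$ norm, and verifying that the composed iteration retains properties (1)--(3) up to harmless constants.
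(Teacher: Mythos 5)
The paper does not reprove this theorem; it is imported verbatim from \cite[Theorem~7.6, Lemma~8.5]{bownik-cruz-uribe}, and your iteration is essentially the construction used there. For the main body of the statement your argument is sound: the partial Minkowski sums are nested because every nonempty set in $\K_{bcs}(\R^d)$ is symmetric and convex and hence contains the origin, properties (1) and (2) follow as you say, and the ``obstacle'' you identify in (3) is closed by exactly the decomposition you sketch. Concretely, since the sets involved are compact convex and contain $0$, one has $SG(x)\subset S_NG(x)+R_N(x)$ with $R_N$ the tail of the series, sublinearity gives $T(SG)(x)\subset T(S_NG)(x)+T(R_N)(x)\subset 2\|T\|\,SG(x)+T(R_N)(x)$, and $\|T(R_N)\|_{L^p_\K(\R^n,W)}\leq \|T\|\sum_{k>N}2^{-k}\|G\|_{L^p_\K(\R^n,W)}\to 0$, so along a subsequence $|W(x)T(R_N)(x)|\to 0$ a.e.\ and the closedness of $SG(x)$ yields the inclusion; no further semicontinuity machinery is needed.

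The one genuine gap is the final, ellipsoid-valued assertion, which you explicitly leave open. You are right that Minkowski sums of ellipsoids are not ellipsoids in general; indeed, read literally (with ``$T$ preserves ellipsoid-valuedness'' as the only extra hypothesis) the assertion would fail, since a sublinear monotone isometry that rotates ellipsoids produces non-ellipsoidal sums. What saves the statement is not a new construction but the structure already present in the operators to which it is applied: $T$ has the form $N_A\circ T_0$ for the exhausting operator of Definition~\ref{defn_8.3}, which by construction returns sets of the form $c(x)\,A^{-1}(x)\overline{\mathbf{B}}$. Hence every $T^kG(x)$ with $k\geq 1$ is a nonnegative scalar multiple of the \emph{single} ellipsoid $A^{-1}(x)\overline{\mathbf{B}}$, and if $G$ has the same form (as $H_1^s$ and $H_2$ do in Section~\ref{section:main-thm}), then $S_NG(x)=\big(\sum_{k=0}^N 2^{-k}\|T\|^{-k}c_k(x)\big)A^{-1}(x)\overline{\mathbf{B}}$ because $aK+bK=(a+b)K$ for a convex set $K$ and $a,b\geq 0$; the monotone limit is then again of this form. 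Your instinct to bring in $N_W$ is therefore correct, but the point is that it is already built into $T$, and the ellipsoids produced are homothets of a common one --- once you record that observation, no separate construction or loss of constants is required.
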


The following results allow us to apply "reverse" factorization in our proof to construct a matrix $\A_{p,\B}$ weight.

\begin{definition}\cite[Definition 7.1]{bownik-cruz-uribe}\label{defn_A_1^K}
    Given a locally integrable bounded function $F: \R^n \to \K_{bcs}(\R^d)$, we say that it is in  convex-set valued $\A_{1,\B}^{\K}$ if there exists a constant $C$ such that for almost every x,
    \[
    M_\B F(x) \subset CF(x).
    \]
    The infimum of all such  constants is denoted by $[F]_{\A_{1,\B}^\K}$.
\end{definition}

\begin{prop}\cite[Corollary 7.4]{bownik-cruz-uribe}\label{cor_A_1^K}
  Given a matrix weight $W$, $W\in \A_{1,\B}$ if and only if $W\overline{\mathbf{B}}\in \A_{1,\B}^\K$.  Moreover, $[W]_{\A_{1,\B}} \approx [W\overline{\mathbf{B}}]_{A_{1,\B}}^\K$, where the implicit constants depend only on $d$.  
\end{prop}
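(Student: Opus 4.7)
The plan is to pass to support functions, thereby reducing the convex-set inclusion that defines $\A_{1,\B}^\K$ to a scalar inequality matching the definition of $\A_{1,\B}$. Since the map $x\mapsto W(x)\overline{\mathbf{B}}$ is pointwise a closed convex symmetric set, and since $CW(x)\overline{\mathbf{B}}$ is itself closed and convex, the condition $M_\B(W\overline{\mathbf{B}})(x)\subset CW(x)\overline{\mathbf{B}}$ in Definition~\ref{defn_A_1^K} is equivalent to the family of inclusions
\[
\avgint_B W(y)\overline{\mathbf{B}}\,dy\;\subset\;C\,W(x)\overline{\mathbf{B}}
\]
holding for every $B\in\B$ with $x\in B$: the closed convex hull built into $M_\B$ is absorbed on the right-hand side, and the $\chi_B(x)$ factor restricts us to $B\ni x$.

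Next I would recall that for closed convex symmetric sets $K_1,K_2\subset\R^d$, the inclusion $K_1\subset CK_2$ is equivalent to $h_{K_1}(v)\le C\,h_{K_2}(v)$ for every $v\in\R^d$, where $h_K(v)=\sup_{u\in K}\langle v,u\rangle$. Here $h_{W(x)\overline{\mathbf{B}}}(v)=|W(x)v|$. For the Aumann integral, the selection characterization gives
\[
h_{\avgint_B W(y)\overline{\mathbf{B}}\,dy}(v)
=\sup_{\|g\|_\infty\le 1}\avgint_B\langle W(y)^{\!*}v,g(y)\rangle\,dy
=\avgint_B |W(y)v|\,dy,
\]
where the supremum is attained by $g(y)=W(y)v/|W(y)v|$ and $W^*=W$ is used. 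Thus the inclusion above becomes the scalar inequality $\avgint_B|W(y)v|\,dy\le C|W(x)v|$ for every $v\in\R^d$ and a.e.\ $x\in B$.

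Finally, substituting $v=W(x)^{-1}u$ and taking the supremum over unit vectors $u$ converts this into
\[
\avgint_B |W(y)W(x)^{-1}|_{\op}\,dy\;\le\; C,
\]
and Lemma~\ref{SelfAdjointCommutes} rewrites the integrand as $|W(x)^{-1}W(y)|_{\op}$. Taking the essential supremum over $x\in B$ and then the supremum over $B\in\B$ produces exactly $[W]_{\A_{1,\B}}\le C$. Every step in the chain is reversible, which gives both implications of the equivalence and the comparison of the two characteristic constants; the dimensional factor in $\approx$ is benign and arises only from the standard constants relating the Minkowski functional/support-function description of a convex symmetric body to its operator-norm description.

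The main technical point is the support-function formula for the Aumann integral $\avgint_B W(y)\overline{\mathbf{B}}\,dy$. Once this is in hand, the argument is an exact translation between a set-valued statement and the scalar $\A_{1,\B}$ condition, so no further estimates or limiting procedures are needed.
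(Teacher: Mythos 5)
Your argument is correct and is essentially the proof of the cited result \cite[Corollary 7.4]{bownik-cruz-uribe}, which this paper invokes without reproducing: your support-function identity $h_{\avgint_B W\overline{\mathbf{B}}\,dy}(v)=\avgint_B|W(y)v|\,dy$ is precisely the reducing-operator comparison $\avgint_B W(y)\overline{\mathbf{B}}\,dy\approx \W_B^1\,\overline{\mathbf{B}}$ on which that proof rests, and the reduction of the set inclusion to support functions is the same mechanism. Two points should be made explicit rather than waved at: (i) passing from $\avgint_B|W(y)W(x)^{-1}u|\,dy\le C$ for each unit vector $u$ to $\avgint_B|W(y)W(x)^{-1}|_{\op}\,dy\le C(d)\,C$ requires summing over an orthonormal basis via Lemma~\ref{opNorm:equiv}, since the supremum over $u$ cannot simply be moved inside the integral; this is the sole source of the $d$-dependence. (ii) In the direction $W\in\A_{1,\B}\Rightarrow W\overline{\mathbf{B}}\in\A_{1,\B}^\K$, the exceptional null set in the $\esssup$ defining $[W]_{\A_{1,\B}}$ depends on $B$, so for an uncountable basis one must justify that the union of these null sets over $B\in\B$ is still null (for cubes this is done with the countable subfamily of rational cubes); the converse direction has no such issue.
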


\begin{prop}\cite[Proposition 8.7]{bownik-cruz-uribe}\label{prop_8.7}
  Given \( W_0 \in \A_{1,\B} \), \( W_1 \in \A_{\infty,\B} \), suppose $W_0,\,W_1$ commute. Then for \( 1 < p < \infty \), we have \( W_0 ^{\frac{1}{p}} W_1^{\frac{1}{p'}} \in \A_{p,\B} \).  Moreover,
  \[ [W]_{\A_{p,\B}} \leq c(d) [W_0]_{\A_{1,\B}}^{\frac{1}{p}} [W_1]_{\A_{\infty,\B}}^{\frac{1}{p'}}.  \]
\end{prop}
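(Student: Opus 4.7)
Plan: Set $W = W_0^{1/p} W_1^{1/p'}$. Since the self-adjoint positive definite matrix weights $W_0$ and $W_1$ commute pointwise almost everywhere, Lemma~\ref{lemma:diagonal} guarantees that they are simultaneously diagonalizable by a common orthogonal matrix function at each point; consequently, their arbitrary real powers commute with each other pointwise, $W$ is itself self-adjoint and positive definite almost everywhere, and $W^{-1} = W_0^{-1/p} W_1^{-1/p'}$. My goal is to estimate $[W]_{\A_{p,\B}}$ by bounding the double average defining it for an arbitrary $B\in \B$. The proof follows~\cite[Proposition~8.7]{bownik-cruz-uribe}, and I verify that the cube-basis argument there transfers verbatim to a general basis.

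Fix $B \in \B$. Using pointwise commutativity, rewrite
\[
W(x)W^{-1}(y) = W_0^{1/p}(x)\, W_1^{1/p'}(x)\, W_1^{-1/p'}(y)\, W_0^{-1/p}(y).
\]
The plan is to split the operator norm of this product into a factor involving only $W_0$ and another involving only $W_1$. Applying Cordes's inequality (Lemma~\ref{lemma:cordes}) gives
\[
|W_1^{1/p'}(x) W_1^{-1/p'}(y)|_{\op} \leq |W_1(x) W_1^{-1}(y)|_{\op}^{1/p'},
\]
and analogously for the $W_0$ factors. Submultiplicativity of $|\cdot|_{\op}$, together with Lemma~\ref{SelfAdjointCommutes} to rearrange commuting self-adjoint factors, produces a bound of the form
\[
|W(x)W^{-1}(y)|_{\op} \leq G_0(x,y)^{1/p} \, G_1(x,y)^{1/p'},
\]
where $G_0(x,y)$ is controlled by the $\A_{1,\B}$ condition on $W_0$ (which asserts $\avgint_B |W_0^{-1}(x)W_0(y)|_{\op}\,dy \leq [W_0]_{\A_{1,\B}}$ for a.e.\ $x\in B$) and $G_1(x,y)$ by the $\A_{\infty,\B}$ condition on $W_1$ ($\avgint_B |W_1(x)W_1^{-1}(y)|_{\op}\,dy \leq [W_1]_{\A_{\infty,\B}}$ for a.e.\ $x\in B$).

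Next, H\"older's inequality with exponent $p$ combines these two scalar bounds so that the exponents $1/p$ and $1/p'$ attach to the corresponding characteristic constants. Integrating in $y\in B$, then raising to the power $p/p'$ and integrating in $x\in B$, the resulting products of averages collapse (up to a dimensional constant arising from repeated use of Lemma~\ref{opNorm:equiv}) to $[W_0]_{\A_{1,\B}}^{1/p} [W_1]_{\A_{\infty,\B}}^{1/p'}$. Taking the supremum over $B \in \B$ yields the claimed inequality.

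The main obstacle is that, while $W_0(x)$ and $W_1(x)$ commute at every fixed point $x$, the matrices $W_0(x)$ and $W_1(y)$ at distinct points do not, so the scalar identity $(w_0(x)/w_0(y))^{1/p} (w_1(x)/w_1(y))^{1/p'}$ has no literal matrix analogue via direct multiplication. The interplay of submultiplicativity of $|\cdot|_{\op}$ with Cordes's inequality is precisely what substitutes for this scalar factorization and is the central technical input. Beyond that, the argument uses only averages over sets in $\B$, so the adaptation from cubes to a general basis is mechanical.
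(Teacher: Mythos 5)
The paper does not actually prove this proposition---it cites \cite[Proposition~8.7]{bownik-cruz-uribe}, where the argument runs through reducing operators and Proposition~\ref{prop:reducing-op}. Your outline instead rests on a pointwise factorization of $|W(x)W^{-1}(y)|_{\op}$, and that is where it breaks down. You assert that submultiplicativity of $|\cdot|_{\op}$ plus Lemma~\ref{SelfAdjointCommutes} lets you pass from
$|W_0^{1/p}(x)W_1^{1/p'}(x)W_1^{-1/p'}(y)W_0^{-1/p}(y)|_{\op}$ to the product
$|W_0^{1/p}(x)W_0^{-1/p}(y)|_{\op}\,|W_1^{1/p'}(x)W_1^{-1/p'}(y)|_{\op}$. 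Submultiplicativity only splits a product in the order in which it is written, and no rearrangement is available: $W_0(x)$ and $W_1(x)$ commute with each other but not with the factors at $y$, and Lemma~\ref{SelfAdjointCommutes} only swaps the two halves of a product of \emph{two} self-adjoint matrices; it does not permute factors inside a longer product. Worse, the inequality $|ABCD|_{\op}\leq C(d)\,|AD|_{\op}|BC|_{\op}$ is simply false for commuting pairs $(A,B)$ and $(C,D)$ of positive definite matrices: take $d=2$, $p=2$, $A=B=\diag(M,1)$ and $C=D=S\diag(M,1)S^t$ with $S$ a rotation whose $(1,1)$ entry is $1/M$. Then $|ABCD|_{\op}=|\diag(M^2,1)S\diag(M^2,1)|_{\op}\geq M^3$, while $|AD|_{\op}=|BC|_{\op}\leq 2M$, so the right-hand side is at most $4M^2$. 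This is exactly the reverse-Cordes phenomenon that the paper warns about (see Example~\ref{example:no-reverse-cordes} and the discussion after Proposition~\ref{lemma_6.5}); Cordes's inequality (Lemma~\ref{lemma:cordes}) goes the wrong way for what you need.

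There is a second, independent gap in the averaging step. The hypotheses $W_0\in\A_{1,\B}$ and $W_1\in\A_{\infty,\B}$ control only \emph{first-power} averages in $y$ (with an essential supremum in $x$), whereas the inner integral of the $\A_{p,\B}$ condition is an $L^{p'}$ average; applying H\"older to $G_0^{p'/p}G_1$ produces $L^{p'}$ averages of $G_0$ and $G_1$ that the hypotheses do not bound without a reverse H\"older input. The scalar proof avoids both problems by never factoring $w(x)w(y)^{-1}$ pointwise: it uses $w_0(y)\geq [w_0]_{\A_{1,\B}}^{-1}\avgint_B w_0$ and $w_1(x)\leq [w_1]_{\A_{\infty,\B}}\big(\avgint_B w_1^{-1}\big)^{-1}$ to replace one factor in each product by a fixed average over $B$. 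The matrix argument in \cite[Proposition~8.7]{bownik-cruz-uribe} does the same with the $L^1$ reducing operators of $W_0$ and $W_1^{-1}$ on $B$, combined with Proposition~\ref{prop:reducing-op}, Lemma~\ref{lemma:cordes}, and inequality~\eqref{opNorm-twomatrices}. You would need to rebuild your proof along those lines.
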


\begin{remark}
    In \cite{bownik-cruz-uribe} a more general "reverse" factorization result is proved which does not assume the matrices commute.  It remains true in this more general setting, but we only give the version needed to prove Theorem~\ref{thm:off-diag-extrapol}.
\end{remark}

\section{Proof of Theorem~\ref{thm:off-diag-extrapol}}
\label{section:main-thm}

In this section we prove our main extrapolation result, Theorem~\ref{thm:off-diag-extrapol}.  In fact, we will prove a more general result, stated in terms of a family $\F$ of extrapolation pairs.    Hereafter, $\F$ will denote a family of pairs $(f,g)$ of
measurable, vector-valued functions such that neither $f$ nor
$g$ is equal to $0$ almost everywhere.  If we write an
inequality of the form
\[ \|f\|_{L^p(\R^n,W)} \leq C\|g\|_{L^p(\R^n,W)}, \qquad (f,g) \in
    \F, \]
  we mean that this inequality holds for all pairs $(f,g)\in\F$ such
  that the lefthand side of this inequality is finite.  The constant,
  whether given explicitly or implicitly, is assumed to be independent
  of the pair $(f,g)$ and to depend only on $[W]_{\A_p}$ and not on
  the particular weight $W$.  To prove Theorem~\ref{thm:off-diag-extrapol} from Theorem~\ref{thm:off-diag-extrapol-pairs} below, we need to construct the family $\F$.  Intuitively, it suffices to take $\F = \{ (|Tf|,|f|) \}$, where the functions $f$ are taken from some suitable dense subset of $L^p(\R^n,W)$.  However, it  is important to remember that $\|Tf\|_{L^p(\R^n,W)}<\infty$ is a necessary  assumption and insuring this requires some minor additional work.     In the scalar case this can easily be done via
  a truncation argument and approximation:
  see~\cite[Section~6]{preprint-DCU}.  In the case of matrix weights a
  similar argument can be applied: see \cite[Section~10]{bownik-cruz-uribe} for a detailed discussion.

\begin{theorem} \label{thm:off-diag-extrapol-pairs}
Let $\B$ be a matrix Muckenhoupt basis and let $\F$ be a family of extrapolation pairs.   Fix $s$, $1\leq s<\infty$. Suppose that for some $ p_0, q_0, 1 \leq p_0 \leq q_0 \leq \infty $, where $\frac{1}{p_0}-\frac{1}{q_0}= \frac{1}{s'}$ and $r_0=\frac{q_0}{s}$, there exists a positive increasing function $ N_{p_0,q_0}$ such that for every matrix weight $W$ with $ W_0^s \in \A_{r_0,\B} $,
\begin{multline} \label{eqn:extrapol0}
    \left( \int_{\R^n} \left| W_0(x) f(x) \right|^{q_0} \,dx \right)^{\frac{1}{q_0}} \\
\leq N_{p_0, q_0} \left( [W_0^s]_{\A_{r_0,\B}}\right)
\left( \int_{\R^n} \left| W_0(x) g(x) \right|^{p_0} \,dx \right)^{\frac{1}{p_0}}, \quad (f,g) \in \F.
\end{multline}
Then for all $ p,\, q $ such that $ 1 < p \leq q < \infty $, $ \frac{1}{p} - \frac{1}{q} = \frac{1}{p_0} - \frac{1}{q_0}$, $r=\frac{q}{s}$, and for all matrix weights $W$ with $ W^s \in \A_{r,\B} $,
\begin{multline} \label{eqn:extrapol1}
\left( \int_{\R^n} \left| W(x) f(x) \right|^q \,dx \right)^{\frac{1}{q}} \\
\leq N_{p, q}(\B,p_0,q_0,p,q,[W^s]_{\A_{r,\B}})
\left( \int_{\R^n} \left| W(x) g(x) \right|^p \,dx \right)^{\frac{1}{p}}, \qquad (f,g) \in \F,
\end{multline}
where 
\begin{multline*} N_{p,q}(\B,p_0,q_0,p,q, [W^s]_{\A_{r,\B}}) \\
    =
C(d,p_0,q_0,p,q) N_{p_0,q_0}\big( C(d,p_0,q_0) 
K_{r'}([W^{-s}]_{\A_{r',\B}})^{\frac{1}{r_0}} K_r([W^s]_{\A_{r,\B}})^{\frac{1}{r_0'}}\big). 
\end{multline*}
\end{theorem}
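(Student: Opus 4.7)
The plan is to extend the convex-set valued Rubio de Francia extrapolation scheme of Bownik and the first author \cite{bownik-cruz-uribe} to the off-diagonal setting for a general matrix Muckenhoupt basis $\B$. If $p_0 = q_0$ then $s = 1$ and $p = q$, and the claim reduces to a basis version of Theorem~\ref{thm:matrix-extrapolation-intro}, whose proof is a cosmetic modification of the one in \cite{bownik-cruz-uribe}. So I assume $p_0 < q_0$, whence $1 < r_0, r < \infty$. Fix $(f,g) \in \F$ and $W$ with $W^s \in \A_{r,\B}$, and assume $I := \|Wf\|_{L^q} < \infty$ (else there is nothing to prove). Since $q = sr$, the identity $I^s = \bigl\| |Wf|^s\bigr\|_{L^r}$ and $L^r$--$L^{r'}$ duality provide $\phi \geq 0$ with $\|\phi\|_{L^{r'}} = 1$ and
\[
  I^s = \int_{\R^n} |W(x)f(x)|^s \phi(x)\,dx.
\]

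The heart of the proof is the construction, via two coordinated convex-set valued Rubio de Francia iterations, of an auxiliary matrix $V$ with $V^s \in \A_{r_0,\B}$. By Corollary~\ref{cor:duality}, $W^{-s} \in \A_{r',\B}$, and since $\B$ is a matrix Muckenhoupt basis, Theorem~\ref{thm_6.9} shows that the convex-set maximal operator $M_\B$ of Definition~\ref{defn:convex-set-max-op} is bounded on $L^{r'}_{\K}(\R^n, W^{-s})$ with norm $\lesssim K_{r'}([W^{-s}]_{\A_{r',\B}})$ and on $L^{r}_{\K}(\R^n, W^{s})$ with norm $\lesssim K_r([W^s]_{\A_{r,\B}})$. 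Applied to a convex-set valued function built from $\phi$ and composed with the exhausting operator $N_{W^{-s}}$ of Definition~\ref{defn_8.3}, the iteration of Theorem~\ref{thm_7.6} on $L^{r'}_{\K}(\R^n, W^{-s})$ yields an ellipsoid-valued function whose associated matrix $V_1$ lies in $\A_{1,\B}$ by Proposition~\ref{cor_A_1^K}, with $[V_1]_{\A_{1,\B}} \lesssim K_{r'}([W^{-s}]_{\A_{r',\B}})$. A parallel iteration on $L^r_{\K}(\R^n, W^s)$ using $N_{W^s}$ and started from an ellipsoid-valued function that contains $f$ pointwise produces $V_\infty \in \A_{\infty,\B}$ with $[V_\infty]_{\A_{\infty,\B}} \lesssim K_r([W^s]_{\A_{r,\B}})$. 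By the reverse factorization of Proposition~\ref{prop_8.7} (or its non-commuting generalization from \cite{bownik-cruz-uribe}), the matrix $V^s := V_1^{1/r_0} V_\infty^{1/r_0'}$ then satisfies $V^s \in \A_{r_0,\B}$ with
\[
  [V^s]_{\A_{r_0,\B}} \lesssim K_{r'}([W^{-s}]_{\A_{r',\B}})^{1/r_0} K_r([W^s]_{\A_{r,\B}})^{1/r_0'}.
\]

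With $V$ in hand, the argument closes by two Hölder estimates. First, Hölder with exponents $r_0, r_0'$ applied to $\int |Wf|^s \phi$, together with the pointwise domination of $f$ by the $V_\infty$-iteration and the control of $\phi$ provided by the $V_1$-iteration, gives $I^s \lesssim \|Vf\|_{L^{q_0}}^s$. Invoking the hypothesis~\eqref{eqn:extrapol0} for $V$ replaces $\|Vf\|_{L^{q_0}}$ by $N_{p_0,q_0}([V^s]_{\A_{r_0,\B}}) \|Vg\|_{L^{p_0}}$. A dual Hölder step on the $g$-side (matched to the first one so the iteration factors cancel) then yields $\|Vg\|_{L^{p_0}} \lesssim \|Wg\|_{L^p}$; collecting all constants produces exactly the $N_{p,q}$ displayed in the statement.

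The hardest step is the precise coordination of the two Rubio de Francia iterations so that the resulting $V$ plays a dual role: it must dominate $f$ (to make $I^s \lesssim \|Vf\|_{L^{q_0}}^s$) and be dominated by $W$ on $g$ (to yield $\|Vg\|_{L^{p_0}} \lesssim \|Wg\|_{L^p}$), while simultaneously satisfying $V^s \in \A_{r_0,\B}$ with the sharp constant. The off-diagonal parameter $s > 1$ forces the $\A_1$- and $\A_\infty$-iterations to take place at the \emph{different} exponents $r'$ and $r$ (rather than a single exponent as in the on-diagonal case of \cite{bownik-cruz-uribe}), so the powers $1/r_0$ and $1/r_0'$ in the reverse factorization and in the two Hölder pairings must align so that the iteration constants emerge in precisely the product $K_{r'}([W^{-s}]_{\A_{r',\B}})^{1/r_0} K_r([W^s]_{\A_{r,\B}})^{1/r_0'}$ demanded by the statement; tracking these constants through the non-commuting reverse factorization is the main technical wrinkle beyond the cube, on-diagonal case.
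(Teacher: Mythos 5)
Your strategy is exactly the paper's: dualize $\|Wf\|_{L^q(\R^n,W)}^s = \bigl\||Wf|^s\bigr\|_{L^r}$ at exponent $r$, run one convex-set valued Rubio de Francia iteration relative to $L^{r'}_\K(\R^n,W^{-s})$ to produce an $\A_{1,\B}$ factor and another relative to $L^{r}_\K(\R^n,W^{s})$ to produce an $\A_{\infty,\B}$ factor, combine them by reverse factorization into $W_0^s = V_1^{1/r_0}V_2^{1/r_0'}\in\A_{r_0,\B}$, apply the hypothesis to $W_0$, and close with H\"older. Two points, however, are genuine gaps as written. First, your reduction is incorrect: $p_0<q_0$ does \emph{not} imply $1<r_0<\infty$. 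If $p_0=1$ then $s=q_0$ and $r_0=q_0/s=1$; if $q_0=\infty$ then $r_0=\infty$. These endpoints are part of the theorem ($1\le p_0\le q_0\le\infty$), and your central H\"older step with exponents $r_0,\,r_0'$ degenerates there; they need separate (simpler) treatments in which only one of the two iterations is used -- the dual one when $p_0=1$, the $\A_\infty$-producing one when $q_0=\infty$.

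Second, the seed of the iteration on $L^r_\K(\R^n,W^s)$ cannot involve only $f$: it must be built from
$h_1 = |Wf|/\|f\|_{L^q(\R^n,W)} + |Wg|^{p/q}/\|g\|_{L^p(\R^n,W)}^{p/q}$,
raised to the power $s$. You correctly identify that the resulting auxiliary weight must both dominate $f$ and ``be dominated by $W$ on $g$,'' but the latter property is precisely what the $g$-term in the seed provides; starting from an ellipsoid-valued function that only contains $f$, the final estimate $\|W_0g\|_{L^{p_0}}\lesssim\|Wg\|_{L^p(\R^n,W)}$ has no justification, since it rests on the pointwise bound $|Wg|^{p/q}/\|g\|_{L^p(\R^n,W)}^{p/q}\le \overline h_1$. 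A minor remark: the two factors $V_1=\overline h_2W^s$ and $V_2=\overline h_1^{-s}W^s$ are scalar multiples of $W^{\pm s}$ and hence commute, so the commuting reverse factorization of Proposition~\ref{prop_8.7} suffices; no appeal to the non-commuting generalization is needed.
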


\begin{remark}
    The constant in Theorem~\ref{thm:off-diag-extrapol-cubes} follows immediately from that in Theorem~\ref{thm:off-diag-extrapol-pairs} and Remark~\ref{remark:sharp-constant-cubes}.
\end{remark}

\begin{proof}
For the proof,  we will  assume that $p_0<q_0$, so that $1<s<\infty$.  In the diagonal case, when $p_0=q_0$, the same proof works with minor modifications.  We also note that this case is proved in \cite[Theorem~9.1]{bownik-cruz-uribe} for the basis $\Q$ of cubes. 
   We divide the proof into three cases: $1<p_0,\, q_0<\infty$, $p_0=1$ and $q_0<\infty$, and $p_0>1$ and $q_0=\infty$.

\textbf{Case I: $\mathbf{1<p_0,\, q_0<\infty}$}. Fix $1<p_0<q_0 <\infty$, and define $p,\,q,\,r,\,s$  as in the statement of the theorem.  Fix a matrix weight $W$ such that $W^s \in \A_{r,\B}$. We first define two iteration operators. Define $ P_{W} = N_{W^s} M_\B$; then by Lemmas~\ref{lemma_8.4} and~\ref{lemma_5.5} , $ P_{W} $ is sublinear and monotone, and 
\[
\| P_{W} \|_{L^r_{\K}(\R^n, W^s)} = \| M_\B \|_{L^r_{\K}(\R^n, W^s)}= K_r([W^s]_{\A_{r,\B}}),
\]
where $K_r$ is the function from the definition of a Muckenhoupt basis (Definition~\ref{defn:matrix-muckenhoupt}).  By Theorem~\ref{thm_7.6},
\[
\mathcal{R}_W H(x) = \sum_{k=0}^{\infty} 2^{-k} \| P_{W} \|_{L^r_{\K}(\R^n, W^s)}^{-k} P_{W}^k H(x)
\]
is defined on $ L^r_{\K}(\R^n, W^s) $ and satisfies:
\begin{enumerate} 
    \item[(A$_0$)] $ H(x) \subset \mathcal{R}_W H(x) $;
    \item[(B$_0$)] $ \| \mathcal{R}_W H \|_{L^r_{\K}(\R^n, W^s)} \leq 2 \| H \|_{L^r_{\K}(\R^n, W^s)} $;
    \item[(C$_0$)] $ \mathcal{R}_W H \in \A_1^{\K} $ and 
    $M(\mathcal{R}_W H)(x) \subset  P_{W}(\mathcal{R}_W H)(x) \subset 2K_r([W^s]_{\A_{r,\B}})\mathcal{R}_W H(x)$. 
\end{enumerate}
To define the second iteration operator, note that since $ W^s \in \A_{r, \B} $, by Corollary~\ref{cor:duality},  $W^{-s}\in \A_{r',\B}$ and so $ M_\B $ is also bounded on $ L^{r'}_{\K}(\R^n, W^{-s}) $.  Let
\[
M'_s H(x) = W^{-s}(x) M_\B(W^s H)(x).
\]
Then
\[
\| M'_s H \|_{L^{r'}_{\K}(\R^n)} 
= \| M_\B(W^s H) \|_{L^{r'}_{\K}(\R^n, W^{-s})} 
\leq K_{r'}([W^{-s}]_{\A_{r',\B}}) \| H \|_{L^{r'}_{\K}(\R^n)}.
\]

Let $ P'_{I} = N_{I} M'_s $;  then again by Lemmas~\ref{lemma_5.5} and~\ref{lemma_8.4} we have that $ P'_{I} $ is sublinear, monotone and its operator norm is the same as $M'_s$.  Hence, by Theorem \ref{thm_7.6}, if we define
\[
\mathcal{R}'_{I} H(x) = \sum_{k=0}^{\infty} 2^{-k} \| P'_{I} \|_{L^r_{\K}(\R^n)}^{-k} (P'_{I})^k H(x),
\]
then we have:
\begin{enumerate}
    \item[(A$'_0$)] $ H(x) \subset \mathcal{R}'_{I} H(x) $;
    \item[(B$'_0$)] $ \| \mathcal{R}'_{I} H \|_{L^{r'}_{\K}(\R^n)} \leq 2 \| H \|_{L^{r'}_{\K}(\R^n)} $;
    \item[(C$'_0$)] $ W^s \mathcal{R}'_{I} H \in \A_1^{\K} $ and $ M(W^s \mathcal{R}'_{I} H)(x) \subset K_{r'}([W^{-s}]_{\A_{r',\B}}) W^s \mathcal{R}'_{I} H(x) $.
\end{enumerate}

\medskip

Now fix $ (f, g) \in \mathcal{F}$; by assumption we have that $0 < \|f\|_{L^q(\R^n,W)} < \infty$ and $\|g\|_{L^p(\R^n,W)}>0$; we may also assume that $\|g\|_{L^p(\R^n,W)}<\infty$, since otherwise the desired conclusion is trivial.  Define the convex-set valued functions
\[ F(x)=\clconv\{-f(x),f(x)\}, \qquad G(x)=\clconv\{-g(x),g(x)\}.  \]
Then we have that 
\[
 N_WF(x)=|W(x)F(x)|W^{-1}(x)\overline{\mathbf{B}} = |W(x)f(x)|W^{-1}(x)\overline{\mathbf{B}}; 
\]
similarly, $N_WG(x)= |W(x)g(x)|W^{-1}(x)\overline{\mathbf{B}}$.
Define the ellipsoid valued function
\[
H_1(x) = \bigg( \frac{|W(x) f(x)|}{\| f \|_{L^q(\R^n, W)}} + \frac{|W(x) g(x)|^{\frac{p}{q}}}{\| g \|_{L^p(\R^n, W)}^\frac{p}{q}} \bigg) W^{-1}(x) \overline{\mathbf{B}}= h_1(x)W^{-1}(x)\overline{\mathbf{B}};
\]
then $\| H_1 \|_{L^q(\R^n, W)} = \|h_1\|_{L^q(\R^n)} \leq 2$. 
Let $H_1^s = h_1^s W^{-s} \overline{\mathbf{B}}$; then
\begin{equation}\label{eq_H1s}
    \| H_1^s \|_{L^r_{\K}(\R^n, W^s)} 
    = \bigg(\int_{\R^n} \left| W^s(x) h^s_1(x)W^{-s}(x)\overline{\mathbf{B}} \right|^r \,dx \bigg)^{\frac{1}{r}}
    = \bigg( \int_{\R^n} \left| h_1 \right|^q \,dx \bigg)^{\frac{1}{q}s}  \leq 2^s. 
\end{equation}
Thus,  $H_1^s \in L^r_{\K}(\R^n, W^s)$ and $ \mathcal{R}_W H_1^s $ is defined. Since $H_1^s$ is  an ellipsoid-valued function, by Theorem~\ref{thm_7.6}, $\mathcal{R}_W H^s_1$ is also an ellipsoid-valued function. Therefore, there exists a scalar function $ R_W h_1^s $ such that
\[
\mathcal{R}_W H_1^s(x) = \mathcal{R}_W h_1^s W^{-s} \overline{\mathbf{B}};
\]
further, it follows  from (A$_0$) that  $ h_1^s(x) \leq R_W h_1^s(x) $. Thus, $h_1(x) \leq \left( R_W h_1^s \right)^{\frac{1}{s}}(x)= \overline{h}_1(x)$. 
We claim that properties (A$_0$), (B$_0$), and (C$_0$) imply the following:
\begin{enumerate}
    \item [(A$_1$)]  $ h_1(x) \leq \overline{h}_1(x) $;
    \item [(B$_1$)] $ \| \overline{h}_1 \|_{L^q(\R^n)} \leq 2^{\frac{1}{s}+1} $;
    \item [(C$_1$)] $ \overline{h}_1^s W^{-s} \in \A_{1,\B} $ and $[\overline{h}_1^s W^{-s}]_{\A_{1,\B}} \leq c(d)K_r([W^s]_{\A_{r,\B}}) $.
\end{enumerate}
We already proved  (A$_1$).   Inequality (B$_1$) follows from  (B$_0$) and inequality~\eqref{eq_H1s}:
\begin{multline*}
    \| \overline{h}_1 \|_{L^q(\R^n)} = \left( \int_{\R^n} \left| (\mathcal{R}_W h_1^s)^{\frac{1}{s}}(x) \right|^q \,dx\right)^{\frac{1}{q}} \\
    = \left( \int_{\R^n} \left| W^s(x) \mathcal{R}_W H_1^s(x) \right|^r \,dx \right)^{\frac{1}{rs}} 
    \leq 2^{\frac{1}{s}} \|H_1^s \|_{L^r_{\K}(\R^n,W^s)}^{\frac{1}{s}} \leq 2^{\frac{1}{s}+1}.
\end{multline*}
To prove (C$_1$): from (C$_0$) we have that
    \[
    \mathcal{R}_W H_1^s(x) = \mathcal{R}_W h_1^s W^{-s} \overline{\mathbf{B}} \in \A_{1,\B}^{\K},
    \]
    and so by Lemma~\ref{cor_A_1^K},  $H_1^s W^{-s} \in \A_{1,\B}$ and $[\overline{h}_1^s W^{-s}]_{\A_{1,\B}} \leq c(d)K(\B,W^s,r) $. 

\medskip

To estimate  the lefthand side of \eqref{eqn:extrapol1} we use duality.  Since $\| f \|_{L^q(\R^n, W)}= \| |Wf|^s \|^{\frac{1}{s}}_{L^r(\R^n)} $, by the duality of $L^r(\R^n)$, there exists $h_2(x)\in L^{r'}(\R^n)$ with $\|h_2\|_{L^{r'}(\R^n)}=1$ such that
\[
\|f\|_{L^q(\R^n,W)}=\left(\int_{\R^n} |W(x)f(x)|^s h_2(x)dx \right)^{\frac{1}{s}}.
\]
Let $H_2(x)=h_2(x)\overline{\mathbf{B}}$; then $H_2\in L^{r'}_{\K}(\R^n)$, $\|H_2\|_{L^{r'}_{\K}(\R^n)}=1$, and  $\mathcal{R}'_{I}H_2(x)$ is in $L^{r'}_{\K}(\R^n)$. Moreover, since $H_2$ is a ball-valued function, by Theorem~\ref{thm_7.6}, $\mathcal{R}'_{I}H_2$ is also ball-valued. Therefore, there exists a scalar function $\overline{h}_2= \mathcal{R}'_{I}h_2$ such that
\[
\mathcal{R}'_{I}H_2(x) =\mathcal{R}'_{I}h_2(x)\overline{\mathbf{B}}.
\]
We now claim that the following hold:
\begin{enumerate}
    \item [(A$_1'$)]  $ h_2(x) \leq \overline{h}_2(x) $;
    \item [(B$_1'$)] $ \| \overline{h}_2 \|_{L^{r'}(\R^n)} \leq 2 $;
    \item [(C$_1'$)] $ \overline{h}_2 W^{s} \in \A_{1,\B} $ and $[\overline{h}_2 W^{s}]_{\A_{1,\B}} \leq c(d)K_{r'}([W^{-s}]_{\A_{r',\B}}) $.
\end{enumerate}
Inequality (A$_1'$) follows from (A$_0'$) and the definition of $\overline{h}_2$.  Inequality (B$_1'$) follows from (B$_0'$):
\[ \| \overline{h}_2 \|_{L^{r'}(\R^n)} = 
\| \mathcal{R}'_{I} H_2 \|_{L^{r'}_{\K}(\R^n)} \leq 2 \| H_2 \|_{L^{r'}_{\K}(\R^n)} = 2. \]
Finally, (C$_1'$) follows from (C$_0'$) and Lemma~\ref{cor_A_1^K}.

\medskip

We can now prove inequality~ \eqref{eqn:extrapol1}.  
By (A$_1'$) and  H\"older's inequality with $r_0=\frac{q_0}{s}$ and $r'_0$, we have that

\begin{align*}
    \|f\|_{L^q(\R^n,W)} &= \left( \int_{\R^n} |W(x)f(x)|^s h_2(x) \,dx \right)^{\frac{1}{s}} \\
    & \le \left( \int_{\R^n} |W(x)f(x)|^s \overline{h}_2(x) \,dx \right)^{\frac{1}{s}}\\
    & = \left( \int_{\R^n} |W(x)f(x)|^s\overline{h}_1^{-\frac{s}{r'_0}}(x) \overline{h}_1^{\frac{s}{r'_0}}(x) 
    \overline{h}_2(x) \,dx \right)^{\frac{1}{s}}\\
    & \le \left( \int_{\R^n} |W(x)f(x)|^{q_0} \overline{h}_1^{-\frac{sr_0}{r'_0}}(x)  \overline{h}_2(x) \,dx \right)^{\frac{1}{r_0s}} 
    \left( \int_{\R^n} \overline{h}_1^s(x) \overline{h}_2(x) \,dx \right)^{\frac{1}{r'_0s}} \\
    & = I_1\cdot I_2^{\frac{1}{r'_0s}}.
\end{align*}

We first estimate $I_2$.  By H\"older's inequality for $ r$ and $ r' $, (B$_1$) and (B$_1'$),
\begin{equation*}
    I_2  
     =  \int_{\R^n} \overline{h}_1^s(x) \overline{h}_2(x) \,dx  
     \leq \left( \int_{\R^n} \overline{h}_1(x)^q \,dx \right)^{\frac{1}{q}s} 
    \left( \int_{\R^n}  \overline{h}_2(x)^{r'} \,dx \right)^{\frac{1}{r'}}
       \leq  2^{s+2}.
\end{equation*}

We now estimate $I_1$.  By (A$_1$) and the definition of $h_1$, we have that
\[
\frac{|W(x) f(x)|}{\| f \|_{L^q(\R^n, W)}} \leq h_1(x) \leq \overline{h}_1(x).
\]
Therefore,
\begin{multline*}
    I_1^{q_0} = \left( \int_{\R^n} |W(x) f(x)|^{q_0} \overline{h}_1^{-(q_0 - s)}(x)\overline{h}_2(x) \,dx \right) \\
    \leq \| f \|_{L^{q}(\R^n, W)}^{q_0} \left( \int_{\R^n} \overline{h}_1^{s}(x) \overline{h}_2(x) \,dx \right)
= \| f \|_{L^{q}(\R^n, W)}^{q_0} I_2 < \infty.
\end{multline*}
Define the matrix $W_0$ by 
\begin{equation*}
W_0 =  \overline{h}_1(x)^{-\frac{q_0 - s}{q_0}} \overline{h}_2(x)^{\frac{1}{q_0}} W(x).
\end{equation*}
Assume for the moment that $W_0^{s} \in \A_{r_0,\B}$.  
Again by  (A$_1$) and the definition of $h_1$, we have that 
\[
\frac{|W(x) g(x)|^{\frac{p}{q}}}{\| g \|_{L^p(\R^n, W)}^{\frac{p}{q}}} \leq h_1(x) \leq \overline{h}_1(x).
\]
Therefore, we can apply our hypothesis \eqref{eqn:extrapol0} to get
\begin{align*}
    I_1 & = \left( \int_{\R^n} |W_0(x) f(x)|^{q_0} \,dx \right)^{\frac{1}{q_0}} \\
    & \leq N_{p_0, q_0}([W_0^s]_{\A_{r_0,\B}}) \left( \int_{\R^n} |W_0(x) g(x)|^{p_0} \,dx \right)^{\frac{1}{p_0}}\\
    & = N_{p_0, q_0}([W_0^s]_{\A_{r_0,\B}}) \left( \int_{\R^n} \left| W(x)g(x)\right|^{p_0} \overline{h}_1(x)^{-\frac{(q_0 - s)p_0}{q_0}} \overline{h}_2(x)^{\frac{p_0}{q_0}}  \,dx \right)^{\frac{1}{p_0}}\\
    & \leq N_{p_0, q_0}([W_0^s]_{\A_{r_0,\B}})  \| g \|_{L^{p}(\R^n, W)} 
    \left( \int_{\R^n} \overline{h}_1(x)^{ \frac{p_0q}{p} - \frac{(q_0-s)p_0}{q_0} } \overline{h}_2(x)^{\frac{p_0}{q_0}} \,dx \right)^{\frac{1}{p_0}}.
\end{align*}

Let $ \alpha = r'\frac{q_0}{p_0} > 1 $. Then
\[
\alpha'\cdot\left( \frac{p_0 q}{p} - \frac{p_0 (q_0 - s)}{q_0} \right) = q,
\]
and so, by H\"older's inequality for $\alpha$ and $\alpha'$, (B$_1$) and (B$_1'$), 
\begin{align*}
    I_1 & \leq N_{p_0, q_0}([W_0^s]_{\A_{r_0,\B}}) \| g \|_{L^{p}(\R^n, W)} 
    \left( \int_{\R^n} \overline{h}_1(x)^{q} \,dx \right)^{\frac{1}{p_0\alpha'}} 
    \left( \int_{\R^n} \overline{h}_2(x)^{r'} \,dx \right)^{\frac{1}{r'q_0}}\\
       & \leq 2^{\left[\left(1 + \frac{1}{s}\right) \frac{q}{p_0\alpha'} + \frac{1}{q_0}\right]} 
    N_{p_0, q_0}([W_0^s]_{\A_{r_0,\B}}) \| g \|_{L^{p}(\R^n, W)}.
\end{align*}
Hence,
\begin{equation*}
    \|f\|_{L^q(\R^n,W)}  \le I_1 I_2^{\frac{1}{r'_0 s}} \le 2^{\left[\left(1 + \frac{1}{s}\right) \frac{q}{p_0\alpha'} + \frac{1}{q_0}+ \frac{s+2}{sr'_0}\right]} N_{p_0, q_0}([W_0^s]_{\A_{r_0,\B}}) \| g \|_{L^{p}(\R^n, W)}.
\end{equation*}

\medskip

Therefore, to complete the proof we need to prove that $W_0^s \in \A_{r_0,\B}$ and estimate $[W_0^s]_{\A_{r_0,\B}}$.  Define
\[  V_1 = \overline{h}_2 W^s \quad \text{ and } \quad  V_2 = \overline{h}_1^{-s} W^s. \]
Then, since $1-\frac{s}{q_0}=\frac{s}{p_0'}$, we have that 
\[  V_1^{\frac{1}{r_0}} V_2^{\frac{1}{r'_0}}
= \left(V_1^{\frac{1}{q_0}} V_2^{\frac{1}{p'_0}}\right)^s
 = \left(\overline{h}_2^{\frac{1}{q_0}} W^{\frac{s}{q_0}}  \overline{h}_1^{-\frac{s}{p_0'}} W^{\frac{s}{p_0'}}\right)^s
 = W_0^s. \]
By (C$_1'$),  $V_1 \in \A_{1,\B}$ and $[V_1]_{\A_{1,\B}} \leq c(d)K(B,W^{-s},r')$.  By (C$_1$), $V_2^{-1} \in \A_{1,\B}$ and so
$V_2\in \A_{\infty,\B}$ and $[V_2]_{\A_{\infty,\B}}= [V_2^{-1}]_{\A_{1,\B}} \leq c(d)K(\B,w^s,r)$.  Therefore, by Proposition~\ref{prop_8.7}, $W_0^s \in \A_{r_0,\B}$, and 
\[ [W_0^s]_{\A_{r_0,\B}} \leq c(d)[V_1]_{\A_{1,\B}}^{\frac{1}{r_0}} [V_2]_{\A_{\infty,\B}}^{\frac{1}{r_0'}}  
\leq c(d,p_0,q_0)K(B,W^{-s},r')^{\frac{1}{r_0}}K(\B,W^s,r)^{\frac{1}{r_0'}}. \]
Therefore, we have shown that \eqref{eqn:extrapol1} holds with constant
\begin{multline*}
    N_{p,q}(\B,p_0,q_0,p,q, [W^s]_{\A_{r,\B}}) \\
    =
C(d,p_0,q_0,p,q) N_{p_0,q_0}\big( C(d,p_0,q_0) K_{r'}([W^{-s}]_{\A_{r',\B}})^{\frac{1}{r_0}} 
K_r([W^s]_{\A_{r,\B}})^{\frac{1}{r_0'}}\big). 
\end{multline*}

\medskip

\textbf{Case II: $\mathbf{p_0=1,\, q_0<\infty}$}.  Fix $1<q_0<\infty$ and let $p,\,q,\,r$ be as in the statement of the theorem. Note that in this case, $s=q_0$.  Fix a matrix weight $W$ such that $W^{q_0}\in \A_{r,\B}$.  The proof is very similar to, but simpler than, the proof in Case I; we describe the changes.  To estimate the lefthand side of~\eqref{eqn:extrapol1}, we again apply duality, getting the function $h_2$ such that $ \| h_2 \|_{L^{r'}(\R^n)} = 1$.  Define $H_2$ and $\overline{h}_2$ as before, using the second iteration operator.  Then conditions (A$_1'$), (B$_1'$), and (C$_1'$) hold.  

By (B$_1'$)  and H\"older's inequality with $r$ and $r'$, we have
\begin{multline*}
    \int_{\R^n} |W(x) f(x)|^{q_0} h_2(x) \,dx \\
    \leq \int_{\R^n} |W(x) f(x)|^{q_0} \overline{h}_2(x) \,dx 
    \le \| f \|_{L^{q}(\R^n, W)}^{q_0} \| \overline{h}_2 \|_{L^{r'}(\R^n)}  2 \|f\|^{q_0}_{L^q(\R^n,W)}< \infty.
\end{multline*}
Define the matrix $W_0 = \overline{h}_2^{\frac{1}{q_0}}W$.  By (C$_1'$), $W_0^{q_0} \in \A_{1, \B}$.
Therefore, by our assumption~\eqref{eqn:extrapol0} and H\"older's inequality for $p$ and $p'$,
\begin{align*}
    \| f \|_{L^{q}(\R^n, W)} 
    & \leq \left( \int_{\R^n} |W(x)\overline{h}_2(x)^{\frac{1}{q_0}}f(x)|^{q_0} \,dx \right)^{\frac{1}{q_0}} \\
    & = \left( \int_{\R^n} |W_0(x) f(x)|^{q_0} \,dx \right)^{\frac{1}{q_0}} \\
    & \le N_{1, q_0}([W_0^{q_0}]_{\A_{1,\B}}) \left( \int_{\R^n} |W_0(x) g(x)| \,dx \right)\\
    & = N_{1, q_0}([W_0^{q_0}]_{\A_{1,\B}}) \left( \int_{\R^n} |W(x) g(x)| \overline{h}_2(x)^{\frac{1}{q_0}} \,dx \right)\\
    & \leq N_{1, q_0}([W_0^{q_0}]_{\A_{1,\B}}) \left( \int_{\R^n} |W(x)g(x)|^{p} \,dx \right)^{\frac{1}{p}} \left( \int_{\R^n} \overline{h}_2(x)^{\frac{p'}{q_0}} \,dx \right)^{\frac{1}{p'}}\\
    &  \leq 2^{\frac{1}{q_0}} N_{1, q_0}([W_0^{q_0}]_{\A_{1,\B}}) \| g \|_{L^p(\R^n, W)}.
\end{align*}
In the last inequality, we use the fact that $r'=\frac{p'}{q_0}$ since $ \frac{1}{p} - \frac{1}{q} = 1- \frac{1}{q_0} $.  As before, we can estimate $N_{1, q_0}([W_0^{q_0}]_{\A_{1,\B}})$ to get the desired expression for the constant.

\medskip

\textbf{Case III: $\mathbf{p_0>1,\, q_0=\infty}$}.  Fix $1<p_0<\infty$ and let $p,\,q,\,r$ be as in the statement of the theorem. Note that in this case, $s=p_0'$ and $r_0=\infty$.  Fix a matrix weight $W$ such that $W^{p_0'}\in \A_{r,\B}$.  Again, the proof is similar to, but simpler than, the proof in Case I.   Define $H_1$, $h_1$, and $\overline{h}_1$ as before, using the first iteration operator.  Then conditions (A$_1$), (B$_1$), and (C$_1$) hold.  

By the definition of $h_1$ and (A$_1$),  
\begin{equation} \label{eqn:fbound}
|W(x) f(x)| \overline{h}_1(x)^{-1} \leq |W(x) f(x)| h_1(x)^{-1} \leq \| f \|_{L^q(\mathbb{R}^n, W)}<\infty.
\end{equation}
Hence, if we define $W_0 = \overline{h}_1^{-1}W$, we have that 
\[
\| f \|_{L^\infty(\R^n, W_0)} \leq \| f \|_{L^q(\R^n, W)} < \infty.
\]
By (C$_1$), $\overline{h}^{p_0'}_1 W^{-p_0'} \in \A_{1,\B}$, so $W_0^{p_0'} \in \A_{\infty,\B}$.   Therefore, by our assumption \eqref{eqn:extrapol0},
\begin{align*}
    \| f \|_{L^q(\R^n, W)}  
     & = \left( \int_{\R^n} |W_0(x) f(x)|^q \overline{h}_1(x)^q \,dx \right)^{\frac{1}{q}} \\
    & \le \|f\|_{L^{\infty}(\R^n,W_0)}\|\overline{h}_1\|_{L^q(\R^n)} \\
    & \leq 2^{\frac{1}{p_0'}+1}N_{p_0, \infty}([W_0^{p_0'}]_{\A_{\infty,\B}}) \| g \|_{L^{p_0}(\R^n, W_0)} \\
    & \leq 2^{\frac{1}{p_0'}+1}N_{p_0, \infty}([W_0^{p_0'}]_{\A_{\infty,\B}})
    \bigg(\int_{\R^n} |\overline{h}_1(x)^{-1}W(x)g(x)|^{p_0}\,dx\bigg)^{\frac{1}{p_0}}.
\end{align*}
Arguing as we did to prove~\eqref{eqn:fbound}, we have that 
\[ |\overline{h}_1(x)^{-1}W(x)g(x)| \leq \overline{h}_1(x)^{\frac{q}{p}-1}\|g\|_{L^p(\R^n,W)}.  \]
Therefore, we can continue the above estimate to get
\begin{multline*} \bigg(\int_{\R^n} |\overline{h}_1(x)^{-1}W(x)g(x)|^{p_0}\,dx\bigg)^{\frac{1}{p_0}} 
\leq \|g\|_{L^p(\R^n,W)}\bigg(\int_{\R^n} \overline{h}_1(x)^{p_0(\frac{q}{p}-1)}\,dx\bigg)^{\frac{1}{p_0}}\\
= \|g\|_{L^p(\R^n,W)}\|\overline{h}_1\|_{L^q(\R^n)}^{\frac{q}{p_0}}
\leq 2^{(\frac{1}{p'_0}+1)\frac{q}{p_0}}\|g\|_{L^p(\R^n,W)}.  
\end{multline*}
If we combine these inequalities and estimate $[W_0^{p_0'}]_{\A_{\infty,\B}}$ as before, we get the desired bound.  This completes the proof.
\end{proof}

\section{The weak matrix Muckenhoupt property}
\label{section:weak-Muckenhoupt}

In this section we prove Theorem~\ref{thm:weak-muckenhoupt}.  For the convenience of the reader we recall the definitions we made in the Introduction.  Given a basis $\B$, we set
\[ \Omega = \bigcup_{B\in \B} B.  \]
We define an equivalence relation on $\B$ by saying that given $B,\,B'\in \B$, then $B\approx B'$ if there exists a finite sequence $\{B_i\}_{i=0}^k$, $B_i\in \B$, $B_0=B$, $B_k=B'$, and for $1\leq i \leq k$, $B_{i-1}\cap B_i \neq \emptyset$.  Since the sets $B_i$ are open, each intersection has positive measure.  Denote the equivalence classes by $\B_j$ and the union of sets in $\B_j$ by $\Omega_j$.  

Now let $W$ be a $d\times d$ matrix function defined on $\Omega$ such that the components of $W$ take values in $[-\infty,\infty]$.  (Without loss of generality, we may assume $W$ is zero on $\R^n\setminus \Omega$.)     We also assume that $W$ is self-adjoint and positive semi-definite, in the sense that $\langle W(x)v,v\rangle \geq 0$ for every $v\in \R^d$ and almost every $x\in \Omega$, using the conventions that $0\cdot \infty = 0$, and $s\cdot \infty+t\cdot\infty= \sgn(s+t)\infty$.

Let $f=(f_1,\ldots,f_d)$, where each $f_i$ is a measurable function taking values in $[-\infty,\infty]$. Define the product $W(x)f(x)$ again using the above conventions.   Let $L^p(\R^n,W)$ consist of all such $f$ that satisfy
\[ \int_{\R^n} |W(x)f(x)|^p\,dx < \infty.  \]
For such a matrix function $W$, define the  maximal operator $\wM_{W,\B}$ on $L^p(\R^n,W)$ by
\[  \wM_{W,\B}f(x) = \sup_{B\in \B} \avgint_B |W(x)f(y)|\,dy \cdot \chi_B(x).  \]
We assume that $\B$ has the weak matrix Muckenhoupt property for $W$ and $p$: that is, $\wM_{W,\B} : L^p(\R^n, W)\rightarrow L^p(\R^n)$. 

We will first prove that 
 $W$ is either trivial or non-trivial on each equivalence class $\Omega_j$.  
Fix $B\in \B$ and  suppose that $W$ is not nontrivial on $B$.  Then there exists $v \in \R^d$, $v \neq 0$, such that $|W(x)v|=0$ or $|W(x)v|=\infty$ on a set of positive measure.   Fix $S \subset B$, measurable and define $f = v \chi_S$.
Then for any $x \in B$,
\[
\widetilde{M}_{W, \mathcal{B}} f(x) \geq \avgint_B |W(x) v \chi_S(y)| \, dy = \frac{|S|}{|B|} |W(x) v|.
\]
Since $\widetilde{M}_{W, \mathcal{B}} : L^p(W) \to L^p(\R^n)$, we have that 
\begin{equation}\label{eq_*}
    \left( \frac{|S|}{|B|} \right)^p \int_B |W(x) v|^p \,dx \leq \|\widetilde{M}_{W, \mathcal{B}} f\|_{L^p(\R^n)}^p 
    \leq C\|f\|_{L^p(W)}^p    
    = C \int_S |W(x) v|^p \, dx.
\end{equation}

Let $S = \{x \in B : |W(x) v| = 0 \}$ and suppose $|S| > 0$. Then the righthand side of \eqref{eq_*} is $0$, so the lefthand side is also $0$. Thus,
\[
\int_B |W(x) v|^p \, dx = 0,
\]
and so $|W(x)v|=0$ a.e.~in $B$.  
Now suppose $|W(x)v| = \infty$ on a set of positive measure. For each $N>0$, let
\[
S = S_N = \{x \in B : |W(x)v| < N \}.
\]
Suppose that $S_N$ has positive measure. Then the lefthand side of the inequality \eqref{eq_*} equals $\infty$. So the righthand side of \eqref{eq_*} is $\infty$, which is a contradiction. Hence, none of the $S_N$ have positive measure, so
$|W(x)v| = \infty$ a.e.~in $B$.  

We can now show that $W$ is either trivial or nontrivial on each $\Omega_j$.  Fix $j$ and suppose that $W$ is trivial on some $B\in \B_j$.  Let $B'$ be any other set in $\B_j$ and let $\{B_i\}_{i=0}^N$ be the chain of overlapping sets in $\B_j$ connecting them.  
If $W$ is trivial on $B=B_0$, then it is trivial on the set $B_0\cap B_1$, which has positive measure, and so, by the argument above, is trivial on $B_1$.  Continuing in this way, by induction, we see that $W$ is trivial on $B'$.  Hence, $W$ is trivial on $\Omega_j$.  Therefore, if $W$ is nontrivial on any $B\in \B_j$, it must be nontrivial on $\Omega_j$.  

It is straightforward to show that $W$ is finite on $\Omega^*$. Let $W=\{w_{ij}\}_{i,j=1}^d$.   Fix $B\in \B^*$. Then for each basis vector $e_i$, $1\leq i\leq d$, the set $E_i = \{x \in B : |W(x)e_i| < \infty \}$
has full measure, and so $E = \cap_{i=1}^d E_i$ satisfies $|B \setminus E| = 0$. Fix $i$; then for every $x \in E$, $|W(x)e_i| < \infty$, and so the elements in the $i$-th row of $W(x)$ must be finite.  Hence,  $|w_{ij}(x)|<\infty$ a.e.~ for all $1\leq i,\,j\leq d$.

It is somewhat more complicated to show that $W$ is invertible almost everywhere on $\Omega^*$.  Fix a set $B\in \B^*$.  
 Since $W$ is positive semi-definite and measurable, by Lemma~\ref{lemma:diagonal} there exists a measurable orthogonal matrix $U$ such that
\[
U(x) W(x) U^t(x) = \diag(\lambda_1(x), \dots, \lambda_d(x)).
\]
Hence, the eigenvalues of $W$ are measurable functions, and $0\leq \lambda_i(x)<\infty$ for every $i$ and a.e.~$x\in B$.  Define
\[
 \lambda_{\min}(x) = \min\{\lambda_1(x), \dots, \lambda_d(x)\}.
\]
We claim that $\lambda_{\min}$ is also a measurable function.   To show this, define  $H : B \to \{1, \dots, d\}$ by
\[
H(x) = \min \{i \in \{1, \dots, d\} : \lambda_i(x) = \lambda_{\min}(x)\}.
\]
Then $H$ is  measurable, and so is
\[
\lambda_{\min}(x) = \lambda_{H(x)}(x).
\]
(See~\cite[Section~10.2]{domelevo2024boundedness}.)  

Denote the columns of $U$ by ${u}_1(x), \dots, {u}_d(x)$; these are measurable vector-valued functions. If we define
\[
{v}_{\min}(x) = {u}_{H(x)}(x),
\]
then ${v}_{\min}$ is also measurable. To see this, for each $1\leq i \leq d$, let $B_i=\{x\in B : H(x)=i\}$.  Since $H$ is a measurable function, each $B_i$ is measurable, and so
\[ {v}_{\min}(x) = \sum_{i=1}^d {u}_i(x)\chi_{B_i}(x) \]
is a measurable function as well.

For almost every $x$, ${v}_{\min}(x)$ is the unit eigenvector of $W(x)$ associated with $\lambda_{\min}(x)$,
We now repeat the above argument, replacing the constant vector $v$ by the vector function ${v}_{\min}$. Let
$S = \{x \in B : \lambda_{\min}(x) = 0\}$ and define
$f(x) = {v}_{\min}(x) \chi_S(x)$.
Then for all $x \in B$,
\[
\widetilde{M}_{W, B} f(x) \geq \avgint_B |W(x) {v}_{\min}(y)| \, dy.
\]
Since $\widetilde{M}_{W, B} : L^p(W) \to L^p(\R^n)$, we have that
\[
\int_B \left(  \avgint_B |W(x) {v}_{\min}(y)| \, dy \right)^p \, dx \leq C \int_S |W(x) {v}_{\min}(x)|^p \, dx = 0.
\]
Hence, by H\"older's inequality and Fubini's theorem,
\[
\int_S \int_B |W(x) {v}_{\min}(y)| \, dy \, dx = 0.
\]
Therefore, for almost every $y \in S$, 
\[
\int_B |W(x) v_{\min}(y)| \, dx = 0,
\]
so, if we fix such a $y$, for almost every $x \in B$,
\[
|W(x) {v}_{\min}(y)| = 0.
\]
But  $W$ is nontrivial on $B$, so this is impossible unless $|S| = 0$. Therefore, $\lambda_{\min}(x) > 0$ almost everywhere on $B$, and so $W^{-1}(x)$ exists almost everywhere on $B$.  Thus we have shown that $W$ is finite and invertible on $\Omega^*$.

To show that
  \begin{equation} \label{eqn:B*-Ap-cond}
\sup_{B\in \B^*} \bigg(\avgint_B \bigg(\avgint_B |W(x)W^{-1}(y)|_{\op}^{p'}\,dy\bigg)^{\frac{p}{p'}}\,dx\bigg)^{\frac{1}{p}} < \infty, 
\end{equation}
it is enough to note that since $W$ is finite and invertible on $\Omega^*$, we have by a change of variables that $M_{W,\B^*}$ is bounded on $L^p(\R^n)$.  Hence, we have that for all $B\in \B^*$, the averaging operators
\[ A_Bf(x) = \avgint_B f(y)\,dy \cdot \chi_B(x) \]
are uniformly bounded.  This in turn implies that \eqref{eqn:B*-Ap-cond} holds; the proof is essentially identical to the proof for a basis of cubes, but it requires multiple steps, which we sketch and give references for.   The uniform boundedness of averaging operators is equivalent to the reducing matrix condition~\eqref{eqn:reducing-op1} given in Proposition~\ref{prop:reducing-op}.  For a proof for a basis of cubes, see~\cite[Theorem~1.18]{MR3803292}; also see~\cite[Proposition~2.1]{MR2015733}.  Then, the proof that the reducing operator condition is equivalent to the $\A_{p,\B^*}$ condition is again the same as for a basis of cubes:  see Roudenko~\cite[Lemma~1.3]{MR1928089}.  Note that by Lemma~\ref{SelfAdjointCommutes} applied to the reducing operator condition, we have that if $W \in \A_{p,\B}$ condition, then $W^{-1} \in \A_{p',\B^*}$. 

Finally, we will use~\eqref{eqn:B*-Ap-cond} to show that $W^{-1}\in L^{p'}_\loc(\Omega^*)$; the proof that $W \in L^{p}_\loc(\Omega)$ follows by applying the same argument to the $\A_{p',\B^*}$ condition.  Fix $B\in \B^*$; it follows from~\eqref{eqn:B*-Ap-cond} that for almost every $x\in B$, 
\[ \int_B |W^{-1}(y)W(x)|_{\op}^{p'}\,dy < \infty.  \]
Since the set where $W(x)$ is finite and invertible also has full measure, we can fix such an $x$ for which this integral is finite.  Let $0<\lambda_1 \leq \lambda_2 \leq \cdots \leq \lambda_d<\infty $,  be the eigenvalues of $W(x)$, with associated unit eigenvectors $v_i$.  But then we have that
\begin{multline*}
    \int_B |W^{-1}(y)|_{\op}^{p'}\,dy
    \approx \lambda_1^{-p'}\sum_{i=1}^d \int_B \lambda_i^{p'} |W^{-1}(y)v_i |^{p'}\,dy \\
    = \lambda_1^{-p'} \sum_{i=1}^d \int_B |W^{-1}(y)W(x)v_i |^{p'}\,dy 
    \approx \int_B |W^{-1}(y)W(x)|_{\op}^{p'}\,dy < \infty.  
\end{multline*} 
Since every compact subset of $\Omega^*$ is contained in a finite union of sets $B\in \B^*$, it follows that $W^{-1}\in L^{p'}_\loc(\Omega^*)$.
This completes the proof of Theorem~\ref{thm:weak-muckenhoupt}.

\begin{remark}  \label{remark:trivial-matrix}
    We consider briefly how to apply Theorem~\ref{thm:weak-muckenhoupt} in practice.  Given a basis $\B$, suppose that there exists a single equivalence class of sets in it.  If $W$ is nontrivial then we can proceed as before, extending $W$ to all of $\R^n$.  However, if $W$ is trivial, there are two cases.  If there exist vectors $v\in \R^d$ such that $|W(x)v|=0$ a.e., then they form a subspace and if we restrict to the subspace orthogonal to it, there is a matrix $V$ that is nontrivial and acts on this subspace as $W$.  Therefore, by a change of variables we can assume that $W$ is defined on $\R^c$ for $1\leq c<d$.  This approach was first noted in passing by Treil and Volberg~\cite[Section~2]{MR1428818}.  

    If there exists a vector such that $|W(x)v|=\infty$ a.e., we cannot argue the same way since these vectors do not form a subspace.  We could, nevertheless, take their span  as a subspace and consider its orthogonal complement.  Alternatively, we could consider all the $j$ such that there exists $i$ with $|w_{ij}(x)|=\infty$ on a set of positive measure, and take the subspace orthogonal to the span of the basis vectors $e_j$.  (Equivalently, we are considering vector functions that are $0$ in their $j$-th coordinate.)

    Finally, if there is more than one equivalence class, these arguments could be repeated on each one, but this might result in $W$ being of a different dimension on each equivalence class.   Given that this may have limited utility, we leave this to the interested reader.
    \end{remark}

\section{Multiparameter $\mathcal{A}_{p}$ matrix weights}
\label{section:multiparameter}

In this section we show that the multiparameter basis $\Rdf^\alpha$ is a matrix Muckenhoupt basis and prove Theorem~\ref{theo_boundednes}.  Our approach  is to follow the scalar case, where the maximal operator $M_{\mathcal{R}^\alpha}$ is shown to be dominated by iterates of the Hardy-Littlewood maximal operator acting on each $\R^{a_k}$. Our proof generalizes that given by Vuorinen~\cite[Theorem~1.3]{VUORINEN2024109847} for the biparameter case; while similar, there are a number of details to check and so we include the full proof. 

Given $\alpha=(a_1,\ldots,a_j)$ such that $a_1+\cdots+a_j=n$, we write $\R^n=\R^{a_1}\times \R^{a_2}\times \dots \times\R^{a_j}$.  Then set $\Rdf^\alpha$ is the collection of all rectangles $R=Q_1\times Q_2 \times \dots \times Q_j$ where for each $k$, $1\leq k \leq j$, $Q_k \subset \R^{a_k}$ is a cube.   To express iterates as simply as possible, and so simplify our argument, we introduce some notation.  Given  ${x} \in \R^n = \R^{a_1}\times \dots \times \R^{a_j}$, we will denote it by ${x}=(x_1,\dots,x_j)$,   $x_k \in \R^{a_k}$.  We will define the vector $\hat{x}_k$ in $\R^{n-\alpha_k}$ by  
\[ \hat{x}_k =(x_1,\ldots,x_{k-1},x_{k+1},\ldots,x_j). \]
For iterated integrals, we will write $d\hat{x}_k=dx_1\dots \,dx_{k-1}dx_{k+1} \dots \,dx_j$ and $d{x}=dx_1 \dots \,dx_j= dx_k d\hat{x}_k $.  Finally, given $R = Q_1 \times Q_2 \times \dots \times Q_j$, for each $1\leq k \leq j$, we define  $\hat{R}_k = Q_1\times \dots \times Q_{k-1}\times Q_{k+1} \times \dots \times Q_j$, and $\hat{\R}^n_k = \R^{a_1}\times \cdots \times \R^{a_{k-1}} \times \R^{a_{k+1}}\times \cdots \times \R^{a_{j}}$.

If the function $F$ is defined on $\hat{\R}^n_k$, we will write $F(\hat{x}_k)=F(x_1,\ldots,x_{k-1},x_{k+1},\ldots,x_j)$. Similarly, if $G$ is defined on $\R^n$, we will write $G({x})=G(x_1,\dots, x_j)=G(x_k,\hat{x}_k)$.
In particular, if $W$ ia a matrix weight, then define
\[
W_{\hat{x}_k} (x_k) = W(\hat{x}_k, x_k) = W({x}).
\]
If we fix $\hat{x}_k$ and allow $x_k \in \R^{a_k}$ to vary, we can regard $W_{\hat{x}_k} (x_k)$ as a matrix weight on $\R^{a_k}$.

For the proof we need several  lemmas.  The first was proved in the biparameter version in~\cite[Lemma 3.6]{domelevo2024boundedness}.  Our proof is an adaptation of theirs; however, there are several important differences in the details, and so we include the proof.

\begin{lemma}\label{onepMatrixtoMulti}
    Given $1<p<\infty$,  let $W \in \A_{p,\mathcal{R}^{\alpha}}$. Then, for every $k$, $1\leq k \leq j$, and for almost every $\hat{x}_k \in \hat{\R}^n_k$,  
$W_{\hat{x}_k}(\cdot)$ is a one-parameter matrix weight defined on $\R^{a_k}$ that satisfies
\[
[W_{\hat{x}_k}(\cdot)]_{\A_{p,\Q}(\R^{a_k})} \lesssim [W]_{\A_{p,\mathcal{R}^{\alpha}}}.
\]
\end{lemma}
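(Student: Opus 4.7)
The plan is to deduce the one-parameter slice inequality from the multiparameter $\A_{p,\mathcal{R}^\alpha}$ condition by applying it to rectangles of the form $\hat{R}\times Q$, with $\hat{R}\subset \hat{\R}^n_k$ a cube shrinking to a point and $Q\subset \R^{a_k}$ a fixed cube, and then passing to the limit by the Lebesgue differentiation theorem.  To organize the null sets I would first restrict to a countable dense subcollection $\mathcal{Q}_0$ of cubes in $\R^{a_k}$ (cubes with rational centers and rational side lengths); the desired inequality for an arbitrary $Q$ will then follow from the inequality for $Q\in\mathcal{Q}_0$ by continuity in $Q$, which is itself a consequence of the local integrability of the slices $W(\hat{x}_0,\cdot)$ and $W^{-1}(\hat{x}_0,\cdot)$ that emerges from the argument.

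Fix $Q\in\mathcal{Q}_0$ and any cube $\hat{R}\subset \hat{\R}^n_k$.  The $\A_{p,\mathcal{R}^\alpha}$ condition applied to the rectangle $\hat{R}\times Q$ gives
\[
\avgint_{\hat{R}}\avgint_Q \left(\avgint_{\hat{R}}\avgint_Q |W(\hat{x}_k,x_k)W^{-1}(\hat{y}_k,y_k)|_{\op}^{p'}\,dy_k\,d\hat{y}_k\right)^{p/p'} dx_k\,d\hat{x}_k \le [W]^{p}_{\A_{p,\mathcal{R}^\alpha}}.
\]
I would now specialize $\hat{R}$ to a sequence of cubes $\hat{R}_m$ centered at a fixed point $\hat{x}_0\in\hat{\R}^n_k$ with side lengths tending to $0$.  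For a.e.\ $\hat{x}_0$ the Lebesgue differentiation theorem, applied to the locally integrable slice functions $\hat{z}\mapsto W(\hat{z},x_k)$ and $\hat{z}\mapsto W^{-1}(\hat{z},y_k)$ (for a.e.\ $x_k,y_k$, by Fubini and the local integrability of $W$ and $W^{-1}$ furnished by the $A_p$ condition), implies that the averages over $\hat{R}_m$ in the $\hat{x}_k$-slot of $W$ and in the $\hat{y}_k$-slot of $W^{-1}$ both converge pointwise to their values at $\hat{x}_0$.  A dominated convergence argument, with dominator provided by the $\A_{p,\mathcal{R}^\alpha}$ finiteness on a fixed outer rectangle containing all the $\hat{R}_m$, then lets the limit pass through the inner $p'$-average and the outer $p/p'$-power, yielding
\[
\avgint_Q \left(\avgint_Q |W(\hat{x}_0,x_k)W^{-1}(\hat{x}_0,y_k)|_{\op}^{p'}\,dy_k\right)^{p/p'} dx_k \le [W]^{p}_{\A_{p,\mathcal{R}^\alpha}}
\]
for a.e.\ $\hat{x}_0$.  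Intersecting exceptional null sets over the countable family $\mathcal{Q}_0$, and extending to all cubes in $\R^{a_k}$ by the continuity reduction from the first paragraph, gives the claim.

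The main obstacle I expect is the rigorous justification of this limit.  Lebesgue differentiation supplies a.e.\ convergence of the inner integrand, but one must pass that convergence through a nested $p'$-average inside a $p/p'$-power while simultaneously averaging over a shrinking rectangle $\hat{R}_m$ in both the $\hat{x}_k$ and $\hat{y}_k$ variables.  The standard device is two successive applications of the dominated convergence theorem (first in $d\hat{y}_k\,dy_k$, then in $d\hat{x}_k\,dx_k$), possibly after extracting a subsequence, with the dominator drawn from the finite integrand produced by the $A_p$ condition on a fixed reference outer rectangle.  Everything else—the countable reduction, the continuity in $Q$, and the final passage to the supremum—is routine once this limiting step is in hand.
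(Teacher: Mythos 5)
Your overall strategy (slice the $\A_{p,\Rdf^\alpha}$ condition by shrinking the complementary factor to a point) is the right intuition, and it is also what the paper does, but the limiting step as you describe it has a genuine gap, and the paper resolves it by a different mechanism. The quantity you must differentiate is
\[
\avgint_{\hat R_m}\avgint_Q\Big(\avgint_{\hat R_m}\avgint_Q |W(\hat x_k,x_k)W^{-1}(\hat y_k,y_k)|_{\op}^{p'}\,dy_k\,d\hat y_k\Big)^{p/p'}dx_k\,d\hat x_k ,
\]
and the inner integrand is the operator norm of a \emph{product} of matrices evaluated at two different points, raised to the power $p'$. Lebesgue differentiation of the slice functions $\hat z\mapsto W(\hat z,x_k)$ and $\hat z\mapsto W^{-1}(\hat z,y_k)$ tells you that the matrix averages converge, but the limit cannot be pushed inside the nonlinear expression $|{\cdot}\,{\cdot}|_{\op}^{p'}$; what you actually need is differentiation of the coupled function $(\hat x,\hat y)\mapsto |W(\hat x,x_k)W^{-1}(\hat y,y_k)|_{\op}^{p'}$ at the diagonal point $(\hat x_0,\hat x_0)$, whose local integrability requires $|W|_{\op}\in L^{p'}_{\loc}$ and is not supplied by $W\in L^p_{\loc}$ when $p<2$. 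Worse, the outer dominated convergence has no dominator: the function $(\hat x_k,x_k)\mapsto\big(\avgint_{\hat R_m\times Q}|W(x)W^{-1}(y)|^{p'}dy\big)^{p/p'}$ changes with $m$ and is not monotone in $m$, and the $\A_{p,\Rdf^\alpha}$ condition on a fixed outer rectangle only bounds its \emph{average}, not a pointwise majorant. So the "two successive dominated convergence" step is exactly where the argument breaks.

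The paper sidesteps all of this by linearizing with reducing operators. For each cube $Q_k$ with rational vertices and each $v\in\mathbb Q^d$ one forms the fixed scalar functions $F_{Q_k,v}(\hat x_k)=\int_{Q_k}|W(x_k,\hat x_k)v|^p\,dx_k$ and $G_{Q_k,v}(\hat x_k)=\int_{Q_k}|W^{-1}(x_k,\hat x_k)v|^{p'}\,dx_k$; these are genuine, $m$-independent locally integrable functions to which a differentiation theorem applies directly, and the countable index set $(Q_k,v)$ controls the null sets. At a common Lebesgue point one gets convergence of $|\W^p_{R^l}v|$ and $|\overline\W^{p'}_{R^l}v|$, extracts a subsequence of the reducing matrices converging in operator norm, and passes the bound $|\W^p_{R^l}\overline\W^{p'}_{R^l}|_{\op}\le C[W]_{\A_{p,\Rdf^\alpha}}$ (Proposition~\ref{prop:reducing-op}) to the limit--no dominated convergence is needed. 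One further point: because the paper allows arbitrary rectangles $\hat R^l_k$ shrinking to $\hat x_k$, it must invoke the Jessen--Marcinkiewicz--Zygmund theorem, which requires $L^s_{\loc}$ for some $s>1$; this is obtained from the openness of the matrix $\A_p$ condition ($W\in\A_{sp,\Q}$, $W^{-1}\in\A_{sp',\Q}$). Your restriction to cubes $\hat R\subset\hat\R^n_k$ would let you use ordinary Lebesgue differentiation and avoid that ingredient, which is a nice simplification, but it does not repair the central limiting step; to make your route work you would essentially have to reintroduce the reducing-operator (or some other linearizing) device.
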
 

\begin{proof}
To prove this result it will suffice to show that for almost every $\hat{x}_k \in \hat{\R}^n_k$,
\begin{equation}  \label{eqn:OMM1}
\sup_{\Q^0_k} \bigg(\avgint_Q \bigg( \avgint_Q |W_{\hat{x}^k}(x_k) W_{\hat{x}^k}^{-1}(y_k)|_{\op}^{p'}\,dy_k
\bigg)^{\frac{p}{p'}}dx_k \bigg)^{\frac{1}{p}} \leq C[W]_{\A_p, \Rdf^\alpha}, 
\end{equation}
where the supremum is taken over the set $\Q^0_k$  of cubes in $\R^{a_k}$ whose vertices have rational coordinates. (The full condition follows at once since every cube in $\R^{a_k}$ is contained in a cube in $\Q_k^0$ whose sidelength is comparable.    We will prove~\eqref{eqn:OMM1} by proving an equivalent statement for reducing operators.  

 Fix a cube $Q_k \subset \R^{a_k}$, $1\leq k \leq j$.  By Theorem~\ref{thm:weak-muckenhoupt}, $W\in L^p_\loc(\R^n)$, and so, given any $R\in \Rdf^\alpha$ whose $k$-th term is $Q_k$, we have that
 \[ \int_{\hat{R}_k} \int_{Q_k} |W_{\hat{x}_k}(x_k)|_{\op}^p \,dx_k d\hat{x}_k = \int_R |W(x)|_{\op}^p \,dx < \infty. \]
 Hence, for a.e.~$\hat{x}_k\in \hat{R}_k$, $W_{\hat{x}_k}(\cdot)$ is finite for almost every $x_k \in Q_k$, and in fact in $L^p(Q_k)$.  Similarly, since $W^{-1} \in L^{p'}_\loc(\R^n)$, $W_{\hat{x}_k}^{-1}(\cdot)\in L^{p'}(Q_k)$.   Therefore, we can form the reducing operators   $\mathcal{W}^p_{\hat{x}_k, Q_k}$ and $\overline{W}_{\hat{x}_k,Q_k}^{p'}$.  Moreover, we can do so in such a way that they are measurable functions in $\hat{x}_k$.  (See~\cite[Proposition~A.8]{domelevo2024boundedness}.) 

By Proposition~\ref{prop:reducing-op} applied to the basis $\Q^0_k$, to prove~\eqref{eqn:OMM1} it will suffice to show that for every $Q_k\in \Q^0_k$ and a.e.~$\hat{x}_k \in \hat{\R}^n_k$, 
\begin{equation} \label{eqn:OMM2}
    |W_{\hat{x}_k, Q_k}^p \overline{W}_{\hat{x}_k,Q_k}^{p'}|_{\op} \leq C[W]_{\A_{p,\Rdf^\alpha}}. 
\end{equation}

To prove this we will follow the argument in~\cite[Lemma~3.6]{domelevo2024boundedness}.  However, to adapt their argument, we will need to apply the Lebesgue differentiation theorem with respect to the basis of rectangles. By the theorem of Jessen, Marcinkiewicz and Zygmund (and the counter-example of Saks), we need to show that the functions we are differentiating are in $L^s_{\loc}$ for some $s>1$.  (See de Guzm\'an~\cite{MR0457661}.)  To do this, we argue as follows.  Fix a cube $Q_k \in \Q_k^0$.  Since $\A_{p,\Rdf^\alpha} \subset \A_{p,\Q}$, we have that $W\in \A_{p,\Q}$.  Therefore, by a result due to the first author and Penrod~\cite[Theorem~1.5]{DCU-MP-2024}, we have that there exists $s>1$ such that $W\in \A_{sp,\Q}$ and $W^{-1}\in \A_{sp',\Q}$.  Therefore, again by Theorem~\ref{thm:weak-muckenhoupt}, we have that $W\in L^{sp}_\loc(\R^n)$ and $W^{-1}\in L^{sp'}_\loc(\R^n)$.  

Now for every $v\in \mathbb{Q}^{d}$, define the functions
\[ F_{Q_k,v}(\hat{x}_k)  = \int_{Q_k} |W(x_k,\hat{x}_k) v|^p\, dx_k, 
\quad
G_{Q_k,v}(\hat{x}_k)  = \int_{Q_k} |W^{-1}(x_k,\hat{x}_k) v|^{p'}\, dx_k.  \]
Then by Fubini's theorem and H\"older's inequality, we have that $F_{Q_k,v}, \, G_{Q_k,v} \in L^s_\loc(\hat{\R}^n_k)$.  Let $A_{Q_k,v}$ be the set of strong Lebesgue points of $F_{Q_k,v}$ in $\hat{\R}^n_k$ and let $B_{Q_k,v}$ be  the strong Lebesgue points of $G_{Q_k,v}$ in $\hat{\R}^n_k$.  Then, since the intersection is countable, the set of points 
\[ C_{k} = \bigcap_{Q_k \in \Q^0_k}\bigcap_{v\in \mathbb{Q}^{d}} \big(A_{Q_k,v} \cap B_{Q_k,v}\big) \]
is such that $|\hat{\R}^n_k\setminus C_{k}|=0$.  Fix $\hat{x}_k \in C_{k}$ and let $R^l=P^l_1\times \cdots \times P^l_k\in \Rdf^\alpha$ be such that $P^l_k=Q_k$ and  the sequence $\{\hat{R}^l_k\}_{l=1}^k$ shrinks to $\hat{x}_k$ as $l\to \infty$.  Then by the Jessen-Marcinkiewicz-Zygmund theorem, we have that for every $v\in \mathbb{Q}^{d}$,
\[ \lim_{l\to \infty} \avgint_{\hat{R}^l_k} \avgint_{Q_k} |W(x_k,\hat{y}_k)v|^p\,dx_k d\hat{y}_k 
= \avgint_{Q_k} |W(x_k,\hat{x}_k)v|^p\,dx_k \approx |W_{\hat{x}_k,Q_k}^pv|^p. 
\]
Since the sequence on the lefthand side converges, it is bounded; in particular, if  $\{e_i\}_{i=1}^{d}$ is the standard orthonormal basis in $\R^{d}$, the sequence $\{W_{R^l}\}_{l=1}^\infty$ of reducing operators of $W$ on $R^l$ satisfies
\[ |W_{R^l}^p|_{\op}^p \approx \sum_{i=1}^d |W_{R^l}e_i|^p 
\approx \sum_{i=1}^d   \avgint_{\hat{R}^l_k} \avgint_{Q_k} |W(x_k,\hat{y}_k)e_i|^p\,dx_k d\hat{y}_k, \]
and so is uniformly bounded.  Therefore, by passing to a subsequence, we may assume that the sequence  converges to some matrix $\W$ in matrix operator norm.  Passing to the limit, we have that for any $v\in \mathbb{Q}^d$, 
\[ |W_{\hat{x}_k,Q_k}^pv| \approx |\W v|. \]
By density, the same must hold for every $v\in \R^d$. 

Now repeat this argument, replacing $W$ by $W^{-1}$ and $p$ by $p'$. This yields a matrix $\overline{\W}$ such that for every $v\in \R^d$, 
\[ |\overline{W}_{\hat{x}_k,Q_k}^{p'}v| \approx |\overline{\W} v|. \]
Therefore, by inequality~\eqref{opNorm-twomatrices} and Proposition~\ref{prop:reducing-op},
\[ |W_{\hat{x}_k, Q_k}^p \overline{W}_{\hat{x}_k,Q_k}^{p'}|_{\op} \lesssim |\W \overline{\W}|_{\op}
= \lim_{l\to \infty} |W_{R^l}^p \overline{W}_{R^l}^{p'}|_{\op} \leq C[W]_{\A_{p,\Rdf^\alpha}}. 
\]
Since this is true for every $Q_k \in \Q_k^0$ and almost every $\hat{x}_k$, this proves~\eqref{eqn:OMM2} and so completes the proof.
\end{proof}

\medskip

We now consider iterations of the convex set-valued maximal operator, which will play a role in our proof. From Definition~\ref{defn:convex-set-max-op}, if $F$ is a locally integrably bounded function, let
\[
M_{\Rdf^\alpha} F(x) = \clconv \bigg( \bigcup_{R \in \mathcal{R}^{\alpha}}\avgint_R F(y)\,dy \cdot\chi_R(x) \bigg).
\]
We also define the  one-parameter convex set-valued maximal operators. Given $F : \R^n \to \mathcal{K}_{bcs}$ be locally integrably bounded.   Then for almost every $\hat{x}_k$, $F_{\hat{x}_k} : \R^{a_k} \to \mathcal{K}_{bcs}(\R^d)$ is also locally integrably bounded, so for $1\leq k\leq j$ we can define
\[
M^k F_{\hat{x}_k}(x_k) = M\left(F(\hat{x}_k,\cdot)\right)(x_k),
\]
where on the righthand side, $M$ denotes the convex set-valued maximal operator defined with respect to cubes on $\R^{a_k}$.
We now prove the analog of iterations of the scalar maximal operator  for convex set-valued maximal operators. The following result was proved in the biparameter case in~\cite[Lemma~3.1]{VUORINEN2024109847} and we follow his proof.

\begin{lemma}\label{lemma-M^aSubset}
    Let \( F: \R^{n} \to \mathcal{K}_{bcs}(\R^d) \) be locally integrable, bounded, and assume further that for every $k$, $1\leq k \leq j$, \( M^k M^{k+1} \ldots M^j F \) is locally integrably bounded. Then
\[
M_{\Rdf^\alpha} F \subseteq M^1 M^2 \cdots M^j F.
\]
\end{lemma}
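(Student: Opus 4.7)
My plan is to prove the stated inclusion by first reducing to a pointwise containment for a single rectangle and then invoking a Fubini-type decomposition for Aumann integrals. Fix $x = (x_1,\ldots,x_j) \in \R^n$ and a rectangle $R = Q_1 \times \cdots \times Q_j \in \Rdf^\alpha$ with $x \in R$ (so $x_k \in Q_k$ for each $k$). Since $F$ is locally integrably bounded and the iterated maximal operators $M^kM^{k+1}\cdots M^j F$ are as well by hypothesis, all Aumann integrals and iterated integrals that appear below are well defined and the selection-based Fubini theorem for Aumann integrals applies. My first step will be to write
\[
\avgint_R F(y)\,dy = \avgint_{Q_1}\avgint_{Q_2}\cdots\avgint_{Q_j} F(y_1,\ldots,y_j)\,dy_j\cdots dy_1,
\]
where each iterated average is interpreted as an Aumann integral of the corresponding set-valued slice.

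Next I will peel off the innermost average. Since $x_j \in Q_j$, directly from Definition~\ref{defn:convex-set-max-op} applied to the one-parameter operator $M^j$,
\[
\avgint_{Q_j} F(y_1,\ldots,y_{j-1},y_j)\,dy_j \subset M^j F(y_1,\ldots,y_{j-1},x_j),
\]
for each fixed $(y_1,\ldots,y_{j-1})$. The Aumann integral is monotone with respect to set inclusion of the integrand (every selection of the smaller set is a selection of the larger one), so I can integrate this inclusion through $Q_{j-1}$ to obtain
\[
\avgint_{Q_{j-1}}\avgint_{Q_j} F(y)\,dy_j\,dy_{j-1} \subset \avgint_{Q_{j-1}} M^j F(y_1,\ldots,y_{j-1},x_j)\,dy_{j-1}.
\]
Since $x_{j-1} \in Q_{j-1}$, the definition of $M^{j-1}$ (applied to the slice $\hat{x}_{j-1} = (y_1,\ldots,y_{j-2},x_j)$) gives that the right-hand side is contained in $M^{j-1}M^j F(y_1,\ldots,y_{j-2},x_{j-1},x_j)$.

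Iterating this peel-and-dominate step $j$ times yields $\avgint_R F(y)\,dy \subset M^1 M^2 \cdots M^j F(x)$ for every $R \in \Rdf^\alpha$ containing $x$. Because $M^1\cdots M^j F(x)$ is closed and convex, taking the union over all such $R$ and passing to the closed convex hull preserves the inclusion, which is precisely the definition of $M_{\Rdf^\alpha} F(x)$. This establishes the lemma at every $x$, hence as set-valued functions.

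The main obstacle I anticipate is the careful bookkeeping needed to justify the Fubini decomposition and the monotonicity of the Aumann integral for set-valued functions whose selections must be measurable and integrable at every stage. The hypothesis that each iterated maximal operator $M^k M^{k+1} \cdots M^j F$ is locally integrably bounded is exactly what ensures that the intermediate set-valued functions produced by peeling off one variable at a time admit integrable selections, so the iterated Aumann integrals are well-defined and the monotonicity/Fubini steps are legitimate; beyond this, the argument is a direct iteration.
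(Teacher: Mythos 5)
Your argument is correct and is essentially the paper's proof: both peel off one coordinate at a time, using the definition of the one-parameter operator $M^k$ and the hypothesis that each iterate $M^kM^{k+1}\cdots M^jF$ is locally integrably bounded to justify the next averaging step, and then conclude by noting that $M^1\cdots M^jF(x)$ is closed and convex so the closed convex hull over all $R$ containing $x$ stays inside it. The only cosmetic difference is that the paper runs the induction on a single integrable selection $f$ of $F$, using classical Fubini for vector-valued functions, so it only ever needs the inclusion $\avgint_R f\,dy \in M^1\cdots M^jF(x)$ and never the full Fubini identity for Aumann integrals that you assert (of which only the ``$\subseteq$'' half is used, and whose measurability bookkeeping the selection-level formulation renders trivial).
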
 

\begin{proof}
    Fix  \( x \in \R^{a_1} \times \cdots \times \R^{a_j} \) and a multi-parameter rectangle \( R = Q_1 \times \cdots \times Q_j \) that contains~$x$.   If $f$ is a selection function on $Q_j$, then 
\[
\int_{Q_j} f(y_1, \ldots, y_j)  \,dy_j \in \int_{Q_j} F(y_1, \ldots, y_j)  \,dy_j \subseteq M^j F(y_1, \ldots, y_{j-1},x_j).
\]
Since $M^j F$ is locally integrably bounded, if we  integrate  over \( Q_{j-1} \), we get
\begin{multline*}
    \avgint_{Q_{j-1} \times Q_j} f  \,dy_{j-1}  \,dy_j = \avgint_{Q_{j-1}} \left( \avgint_{Q_j} f(y_1, \ldots, y_{j-1}, y_j)  \,dy_j \right) \,dy_{j-1} \\
    \in \avgint_{Q_{j-1}} M^j F(y_1, \ldots, y_{j-1},x_j)  \,dy_{j-1} \subseteq M^{j-1} M^j F(y_1,\dots, x_{j-1},x_j).
\end{multline*}

We can repeat this argument inductively, integrating over $Q_{j-2},\dots ,Q_1$ in turn.  This yields
\[
\avgint_R f \,dy \in M^1 M^2 \cdots M^j F(x).
\]
Since this holds for every selection function \( f \), we have that
\[
\avgint_R F \,dy \subset M^1 M^2 \cdots M^j F(x), 
\]
and so by the definition of the convex set-valued maximal operator, since the righthand side is convex,
\[
M_{\Rdf^\alpha} F(x)  \subset M^1 \cdots M^j F(x).
\]
\end{proof}

\begin{theorem}\label{thm.convMaxBoundedness}
   Fix \( p \in (1, \infty] \) and \( W \in \A_{p, \mathcal{R}^\alpha} \). If $F \in L^p_{\K}(W)$, then 
\begin{equation} \label{eqn:CMB1}
\|M_{\Rdf^\alpha} F\|_{L^p_{\K}(W)} \leq C(n, d, p) [W]_{\A_{p, \mathcal{R}^{\alpha}}}^{jp'} \|F\|_{L^p_{\K}(W)}.
\end{equation}
If \( p = \infty \), \eqref{eqn:CMB1} holds with constant \( [W]_{\A_{\infty}, \mathcal{R}^\alpha} \) instead of \( [W]_{\A_{\infty, \mathcal{R}^\alpha}}^{j} \).
\end{theorem}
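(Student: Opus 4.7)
The strategy is to dominate $M_{\Rdf^\alpha}F$ by a composition of one-parameter convex-set valued maximal operators via Lemma~\ref{lemma-M^aSubset}, and then bound that composition coordinate-by-coordinate using Theorem~\ref{thm_6.9} and Lemma~\ref{onepMatrixtoMulti}. First, by a standard truncation and monotone convergence argument it suffices to prove the estimate when $F : \R^n \to \K_{bcs}(\R^d)$ is itself bounded and compactly supported. For such $F$, the averages $\avgint_R F$ are contained in a fixed ball, so each iterated maximal $M^{k}M^{k+1}\cdots M^j F$ is uniformly bounded and supported in a fixed compact set, hence locally integrably bounded. Consequently, Lemma~\ref{lemma-M^aSubset} applies inductively and gives the pointwise inclusion
\[
M_{\Rdf^\alpha} F(x) \subset M^1 M^2 \cdots M^j F(x),
\]
so that $|W(x) M_{\Rdf^\alpha} F(x)| \le |W(x) M^1\cdots M^j F(x)|$ pointwise and it suffices to bound $\|M^1 \cdots M^j F\|_{L^p_\K(W)}$.

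The main step is to peel off one variable at a time. By Fubini's theorem, for $G = M^2\cdots M^j F$,
\[
\|M^1 G\|_{L^p_\K(W)}^p = \int_{\hat{\R}^n_1}\int_{\R^{a_1}} |W_{\hat x_1}(x_1)\, M^1(G_{\hat x_1})(x_1)|^p \,dx_1\,d\hat x_1.
\]
By Lemma~\ref{onepMatrixtoMulti}, for a.e.~$\hat x_1$ the slice $W_{\hat x_1}$ is a one-parameter matrix weight on $\R^{a_1}$ with $[W_{\hat x_1}]_{\A_{p,\Q}} \lesssim [W]_{\A_{p,\Rdf^\alpha}}$. Applying Theorem~\ref{thm_6.9} to the one-parameter cube basis in $\R^{a_1}$, with the Isralowitz-Moen sharp constant $K_p(t) = C(a_1,d,p)\, t^{p'}$ from Remark~\ref{remark:sharp-constant-cubes}, and then integrating in $\hat x_1$, yields
\[
\|M^1 G\|_{L^p_\K(W)} \le C(n,d,p)\, [W]_{\A_{p,\Rdf^\alpha}}^{p'}\, \|G\|_{L^p_\K(W)}.
\]
Iterating the same argument with $x_2,\ldots,x_j$ in succession accumulates $j$ factors of $[W]_{\A_{p,\Rdf^\alpha}}^{p'}$, giving $\|M^1\cdots M^j F\|_{L^p_\K(W)} \le C(n,d,p)\,[W]_{\A_{p,\Rdf^\alpha}}^{jp'}\,\|F\|_{L^p_\K(W)}$, as required.

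For the endpoint $p=\infty$ the iteration is avoided by a direct pointwise estimate. If $F \in L^\infty_\K(W)$, $x \in R \in \Rdf^\alpha$, and $f$ is an integrable selection of $F$, then
\[
\Bigl|W(x) \avgint_R f(y)\,dy\Bigr| \le \avgint_R |W(x)W^{-1}(y)|_{\op}\,|W(y) f(y)|\,dy \le \|WF\|_\infty \avgint_R |W(x)W^{-1}(y)|_{\op}\,dy.
\]
Taking the closed convex hull over all such $R$ and $f$ and then the essential supremum in $x$ gives $\|W\cdot M_{\Rdf^\alpha} F\|_\infty \le [W]_{\A_{\infty,\Rdf^\alpha}}\,\|WF\|_\infty$, with the advertised linear dependence on the weight constant.

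The principal obstacle is the initial approximation together with the verification that each iterated slice $M^{k+1}\cdots M^j F$ is locally integrably bounded and that the slice weights $W_{\hat x_k}$ inherit the required measurability and integrability so that Theorem~\ref{thm_6.9} and Lemma~\ref{onepMatrixtoMulti} may be applied a.e.; these follow from Theorem~\ref{thm:weak-muckenhoupt} (which provides $W, W^{-1} \in L^{p}_\loc, L^{p'}_\loc$ respectively, and, via a reverse H\"older improvement as in the proof of Lemma~\ref{onepMatrixtoMulti}, a small power improvement needed to invoke the Jessen-Marcinkiewicz-Zygmund differentiation theorem) and from the measurable selection of reducing operators used in Lemma~\ref{onepMatrixtoMulti}.
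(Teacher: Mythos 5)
Your proposal follows essentially the same route as the paper's proof: truncate to bounded $F$ (the paper's Lemma~\ref{lemma:montone-max-op}), invoke Lemma~\ref{lemma-M^aSubset} to dominate $M_{\Rdf^\alpha}F$ by the iterated one-parameter operators, and peel off one variable at a time via Fubini, Lemma~\ref{onepMatrixtoMulti}, and the one-parameter bound with constant $[W]^{p'}$, finishing $p=\infty$ by a direct pointwise estimate. Two small inaccuracies to note: the iterates $M^k\cdots M^jF$ of a compactly supported $F$ are bounded but \emph{not} compactly supported (boundedness alone already gives local integrable boundedness, so nothing is lost), and at $p=\infty$ one must first restrict to the countable family of rectangles with rational vertices so that the $\A_{\infty,\Rdf^\alpha}$ inequality holds for a.e.\ $x$ simultaneously for all $R$ containing $x$ --- a step the paper is careful to spell out and that your ``take the essential supremum in $x$'' glosses over.
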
 

To prove this result we need the following lemma, which extends a well-known result in the scalar case (see, for instance, \cite[Lemma~3.30]{MR3026953}) to convex set-valued functions.

\begin{lemma} \label{lemma:montone-max-op}
    Let $F: \R^{n} \to \mathcal{K}_{bcs}(\R^d)$ be a locally integrably bounded function and define the sequence $\{F_k\}_{k=1}^\infty$ by
    \[ F_k(x) = \big(F(x) \cap k\overline{\mathbf B}\big)\cdot \chi_{B(0,k)}(x). \]  
    Then for any matrix weight $W$, and almost every $x\in \R^d$,
    \[ |W(x)M_{\Rdf^\alpha} F(x)| = \lim_{k\to \infty} |W(x)M_{\Rdf^\alpha} F_k(x)|. \]
\end{lemma}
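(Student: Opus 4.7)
The plan is to handle the two inequalities separately by a standard monotone approximation. First, since $F_k(y) \subset F_{k+1}(y) \subset F(y)$ for every $y$, the monotonicity of the convex set-valued maximal operator (Lemma~\ref{lemma_5.5}) yields $M_{\Rdf^\alpha} F_k(x) \subset M_{\Rdf^\alpha} F_{k+1}(x) \subset M_{\Rdf^\alpha} F(x)$. Hence the scalar sequence $|W(x) M_{\Rdf^\alpha} F_k(x)|$ is non-decreasing in $k$ and bounded above by $|W(x) M_{\Rdf^\alpha} F(x)|$, so $\lim_{k\to\infty}|W(x) M_{\Rdf^\alpha} F_k(x)| \leq |W(x) M_{\Rdf^\alpha} F(x)|$ for every $x$. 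The real content is the reverse inequality.

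For the reverse direction, I would fix $x \in \R^n$ and $v \in M_{\Rdf^\alpha} F(x)$ and aim to show that $|W(x) v| \leq \lim_{k\to\infty}|W(x) M_{\Rdf^\alpha} F_k(x)|$; taking the supremum over such $v$ would then finish the proof. Given $\varepsilon > 0$, using that $M_{\Rdf^\alpha} F(x)$ is defined as the closed convex hull of $\bigcup_{R \in \Rdf^\alpha} \avgint_R F(y)\,dy \cdot \chi_R(x)$, I would pick a finite convex combination $v_\varepsilon = \sum_{i=1}^N a_i z_i$ with $|v - v_\varepsilon| < \varepsilon$, where each $z_i \in \avgint_{R_i} F(y)\,dy$ for some $R_i \in \Rdf^\alpha$ containing $x$. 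By the definition of the Aumann integral, $z_i = \avgint_{R_i} f^i(y)\,dy$ for some integrable selection $f^i$ of $F$ on $R_i$.

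Then I would truncate each selection, setting $f^i_k(y) = f^i(y)\,\chi_{\{y : |f^i(y)| \leq k\} \cap B(0,k)}(y)$ and extending by $0$ outside. Since each $F(y) \in \K_{bcs}(\R^d)$ is convex and symmetric, $0 \in F(y)$, hence $0 \in F_k(y)$ for every $y$; therefore $f^i_k$ is a measurable selection of $F_k$ on $R_i$ (equal to $f^i$ where both cutoff conditions hold, and $0$ elsewhere). Dominated convergence, with dominator $|f^i|$, then gives $\avgint_{R_i} f^i_k(y)\,dy \to z_i$ as $k\to \infty$. Setting $w^\varepsilon_k = \sum_{i=1}^N a_i \avgint_{R_i} f^i_k(y)\,dy$ and using that $\chi_{R_i}(x) = 1$, we get $w^\varepsilon_k \in M_{\Rdf^\alpha} F_k(x)$ for every $k$ and $w^\varepsilon_k \to v_\varepsilon$. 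Hence $|W(x) v_\varepsilon| = \lim_{k\to\infty} |W(x) w^\varepsilon_k| \leq \lim_{k\to\infty} |W(x) M_{\Rdf^\alpha} F_k(x)|$, and sending $\varepsilon \to 0$ finishes the argument.

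The only real subtlety is the truncation bookkeeping: verifying that the truncated vector functions $f^i_k$ really do land in $F_k$ (which rests on $0 \in F(y)$ and the interpretation that $F_k(y) = \{0\}$ when $y \notin B(0,k)$), and extracting a finite convex combination of Aumann averages representing an element of $M_{\Rdf^\alpha} F(x)$ to within $\varepsilon$. Both points follow directly from the definitions of $\K_{bcs}(\R^d)$, the Aumann integral, and the closed convex hull, so once they are in hand the proof is essentially routine.
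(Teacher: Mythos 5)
Your proof is correct, and its skeleton matches the paper's: monotonicity of the convex set-valued maximal operator gives the easy inequality, and the reverse inequality is obtained by truncating integrable selection functions and passing to the limit with dominated convergence. The one genuine difference is how the closed convex hull in the definition of $M_{\Rdf^\alpha}$ is handled. The paper sidesteps it by first replacing $M_{\Rdf^\alpha}F$ with $\widehat{M}_{\Rdf^\alpha}F$, the closure of the union of the Aumann averages, using the identity $|W(x)\widehat{M}_{\Rdf^\alpha}F(x)|=|W(x)M_{\Rdf^\alpha}F(x)|$ (the argument of \cite[Lemma~5.6]{bownik-cruz-uribe}); it can then work with a single rectangle and a single selection, truncated radially via $f_k=\frac{f}{|f|}\min(|f|,k)\chi_{B(0,k)}$. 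You instead unpack the closed convex hull directly, approximating a point of $M_{\Rdf^\alpha}F(x)$ by a finite convex combination of averages and truncating each selection by a hard cutoff to $0$; this is slightly more bookkeeping (several rectangles and selections at once) but is self-contained, avoiding the appeal to the external lemma. Both truncations land in $F_k$ for the same reason, namely that $0\in F(y)$ since the values of $F$ are nonempty, symmetric and convex, and your final passage $\varepsilon\to 0$ is legitimate for almost every $x$ because $|W(x)|_{\op}<\infty$ a.e., which is all the lemma claims.
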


\begin{proof}
    Since for all $k$, $F_k(x) \subset F_{k+1}(x) \subset F(x)$, by Lemma~\ref{lemma_5.5}, $M^\alpha_\K F_k(x) \subset M^\alpha_\K F_{k+1}(x) \subset M^\alpha_\K F(x) $.  Hence, it is immediate that 
    \[ \lim_{k\to \infty} |W(x)M_{\Rdf^\alpha} F_k(x)| \leq |W(x)M_{\Rdf^\alpha} F(x)|. \]

    To prove the reverse inequality first note that arguing as in~\cite[Lemma~5.6]{bownik-cruz-uribe}, if we define
    \[ \widehat{M}_{\Rdf^\alpha} F(x) = \cl\bigg(\bigcup_{R\in \Rdf^\alpha} \avgint_R F(y)\,dy \cdot \chi_{R}(x)\bigg), \]
    where $\cl(E)$ denotes the closure of $E$,   then $|W(x)\widehat{M}_{\Rdf^\alpha} F(x)|= |W(x){M}_{\Rdf^\alpha} F(x)|$.  Fix a set $R\in \Rdf^\alpha$ and let $f\in S^1(F,R)$ be any selection function.  Define the sequence $\{f_k\}_{k=1}^\infty$ by
    \[ f_k(x) = \frac{f(x)}{|f(x)|} \min(|f(x)|,k) \cdot \chi_{B(0,k)}(x). \]  
    Then $f_k \to f$ pointwise a.e.  Moreover, since $F$ is locally integrably bounded, there exists $K\in L^1(R)$ such that $|f_k(x)|\leq |f(x)|\leq K(x)$ for $x\in R$.    Therefore, by the dominated convergence theorem,
    \[ \lim_{k\to \infty} \avgint_R f_k(y) \,dy = \avgint_R f(y)\,dy.  \]

    We can now estimate as follows.  Fix $\epsilon>0$ and $x\in \R^d$.  By the definition of $\widehat{M}_{\Rdf^\alpha}$ and the definition of the Aumann integral, there exists $R\in \Rdf^\alpha$ containing $x$ and a selection function $f\in S^1(F,R)$ such that 
    \begin{multline*} 
    |W(x) {M}_{\Rdf^\alpha}F(x)| 
     = |W(x) \widehat{M}_{\Rdf^\alpha}F(x)| \\
     \leq (1+\epsilon) \bigg|W(x) \avgint_R F(y)\,dy\bigg| 
     \leq (1+\epsilon)^2 \bigg|W(x) \avgint_R f(y)\,dy\bigg|; 
    \end{multline*}
    by the continuity of the norm $\rho(v)=|W(x)v|$, the righthand term equals
    \begin{multline*}
    (1+\epsilon)^2\lim_{k\to \infty} \bigg|W(x) \avgint_R f_k(y)\,dy\bigg|  \\
     \leq (1+\epsilon)^2\lim_{k\to \infty} \bigg|W(x) \avgint_R F_k(y)\,dy\bigg| 
     \leq (1+\epsilon)^2\lim_{k\to \infty} |W(x)M_{\Rdf^\alpha} F_k(x)|. 
    \end{multline*}
    Since $\epsilon$ is arbitrary, we get the desired inequality.
\end{proof}

\begin{proof}[Proof of Theorem~\ref{thm.convMaxBoundedness}]
    Suppose first that $1<p<\infty$.  By Lemma~\ref{lemma:montone-max-op}, it will suffice to prove~\eqref{eqn:CMB1} for $F \in L^\infty_\K(\R^n,\R^d) \cap L^p_{\K}(\R^n,W)$.   The convex set-valued maximal operator is bounded on $L^\infty_{\K}$; this was proved in~\cite[Proposition~5.2]{bownik-cruz-uribe} for the basis of cubes, but the same proof works for any basis.  Therefore, we have that $M_{\Rdf^\alpha}F$ and all the iterates $M^kM^{k+1}\cdots M^j F$ are bounded, and so locally integrably bounded.

For all $k$, if $W\in \A_{p,\Q}(\R^{a_k})$ and $1<p \leq \infty$, by~\cite[Theorem 6.9]{bownik-cruz-uribe} we have  that 
\begin{equation} \label{eq.boundednessM^a}
\| M^k F \|_{L_\K^p(\R^{a_k}, W)} \leq C [W]_{\A_{p,\Q}}^{p'} \| F \|_{L_\K^p(\R^{a_k}, W)}.
\end{equation}
    We can now estimate as follows.  By  Lemma \ref{lemma-M^aSubset} and by Fubini's theorem and Lemma \ref{onepMatrixtoMulti} applied repeatedly,
\begin{align*}
\int_{\R^n} | W(x) M_{\Rdf^\alpha} F(x) |^p \,dx 
& \leq \int_{\hat{\R}^n_1} \int_{\R^{a_1}} | W(x) M_{\K}^1 \cdots M_{\K}^j F(x) |^p \,dx_1 d\hat{x}_1 \\
& \leq \int_{\hat{\R}^n_1} C\left[ W_{\hat{x}_1}(\cdot) \right]_{\A_{p,\Q}(\R^{a_1})}^{pp'}| \int_{\R^{a_1}} 
W(x) M_{\K}^2 \cdots M_{\K}^j F(x) |^p \,dx_1 d\hat{x}_1 \\
&  \leq C\left[ W \right]_{\A_{p,\Rdf^\alpha}}^{pp'} 
 \int_{\hat{\R}^n_2} \int_{\R^{a_2}} |W({x}) M_{\mathcal{K}}^2 \cdots M_{\mathcal{K}}^j F({x})|^p \, dx_2 d\hat{x}_2 \\
& \leq C\left[ W \right]_{\A_{p,\Rdf^\alpha}}^{2pp'}
 \int_{\hat{\R}^n_3} \int_{\R^{a_3}} |W({x}) M_{\mathcal{K}}^3 \cdots M_{\mathcal{K}}^j F({x})|^p \, dx_3 d\hat{x}_3 \\
 & \qquad \vdots \\
 & \leq C\left[ W \right]_{\A_{p,\Rdf^\alpha}}^{jpp'}
 \int_{\R^n}  |W({x})  F({x})|^p \, d{x}. 
\end{align*}
This proves~\eqref{eqn:CMB1} when $p$ is finite.

For \( p = \infty \), assume that \( F \in L_\K^\infty(W) \). We will prove the desired estimate using the definition of \( \A_{\infty, \mathcal{R}^{\alpha}} \);  our argument is similar to the \( L^\infty \)-estimate of \( M_W \)  in~\cite[Proposition 6.8]{bownik-cruz-uribe}, but following the argument in~\cite[Theorem~3.2]{VUORINEN2024109847} we fill a lacuna in that proof.  As before, the  function \( f \) is locally integrably bounded. Let \( \Rdf^{\alpha}_0 \) denote the set of all multiparameter rectangles in \( \R^{a_1} \times \cdots \times \R^{a_j} \) whose vertices have rational coordinates.  Then, if we argue as in~\cite[Proposition~5.7]{bownik-cruz-uribe}, we have that 
    \[ \widehat{M}_{\Rdf^\alpha} F(x) = \cl\bigg(\bigcup_{\Rdf\in \Rdf^\alpha_0} \avgint_R F(y)\,dy \cdot \chi_{R}(x)\bigg). \]
For every \( R \in \Rdf^\alpha_0 \) and almost every $x\in R$,
\[
\avgint_R | W(x) W^{-1}(y) |_{\op}  \,dy \leq [W]_{A_{\infty, R^{\alpha}}}.
\]
Since $\Rdf^\alpha_0$ is countable, by a standard argument this inequality holds for almost every $x$ and every $R$ containing $x$.   Fix such an $x$ and $R$ containing it; then we have that 

\begin{multline*}
   \left|W(x)  \avgint_R  F(y)  \,dy \right|   = \left| \avgint_R W(x) W(y)^{-1} W(y) F(y)  \,dy \right| \\
   \leq \avgint_R | W(x) W^{-1}(y) |_{\op}  \,dy \|WF\|_{L^{\infty}(\R^n;\R^d)} \le [W]_{\A_{\infty, R^{\alpha}}} \| F \|_{L^\infty_\K(W)}. 
\end{multline*}
Hence, for almost every $x$,
\[
    | W(x) M_{\Rdf^\alpha} F(x) | = |W(x)\widehat{M}_{\Rdf^\alpha}F(x)|
     \leq [W]_{\A_{\infty, R^{\alpha}}} \| F \|_{L^\infty_\K(W)}.
\]
The desired estimate follows at once.
\end{proof}

We can now prove our main result.   

\begin{proof}[Proof of Theorem~\ref{theo_boundednes}]
We will prove this assuming $1\leq p<\infty$;   the proof when $p=\infty$ is almost identical with the obvious changes and so is omitted.  
Fix $W\in \A_{p,\Rdf^\alpha}$.  Let \( f \in L^p(\R^n; \R^d) \) and define  $F(x) = \clconv \left(\{-f(x), f(x)\}\right)$.  Then $F\in L^p_\K(\R^n)$; moreover, we have that $W^{-1}F$ is locally integrably bounded.  To see this note that for any $R\in \Rdf^\alpha$, by H\"older's inequality,
\[ \int_R |W^{-1}(y)F(y)|\,dy 
\leq |W^{-1}(x)|_{\op} \bigg( \int_R |W(x)W^{-1}(y)|_{\op}^{p'}\,dy \bigg)^{\frac{1}{p'}}
\|F\|_{L^p_\K(\R^n)}. 
\]
Since $W\in \A_{p,\Rdf^\alpha}$, the first two terms on the righthand side are finite for almost every $x\in R$.
Therefore, by~\cite[Lemma~3.4]{VUORINEN2024109847},
\[
M_{W,\Rdf^\alpha} f(x) \lesssim |W(x) M_{\Rdf^\alpha} (W^{-1} F)|.
\]
(Note that this result is stated and proved for the biparamenter basis, but the same proof holds for any basis $\Rdf^\alpha$.)
Hence, by Theorem \ref{thm.convMaxBoundedness},
\begin{multline*}
    \|M_{W,\Rdf^\alpha} f\|_{L^p(\R^n)} 
    \lesssim \|W M_{\Rdf^\alpha} (W^{-1} F)\|_{L^p(\R^n)} \\
    \lesssim [W]_{\A_{p, \mathcal{R}^{\alpha}}}^{jp'} \|W^{-1} F\|_{L^p_\K(W)} 
    = [W]_{\A_{p, \mathcal{R}^{\alpha}}}^{jp'} \|F\|_{L^p_\K(\R^n)}
    =[W]_{\A_{p, \mathcal{R}^{\alpha}}}^{jp'} \|f\|_{L^p(\R^n,\R^d)}.
\end{multline*}
\end{proof}

\bibliographystyle{plain}
\bibliography{matrix-extrapol-bases}

\begin{thebibliography}{10}

\bibitem{MR2458436}
J.-P. Aubin and H.~Frankowska.
\newblock {\em Set-valued analysis}.
\newblock Modern Birkh\"auser Classics. Birkh\"auser Boston, Inc., Boston, MA,
  2009.
\newblock Reprint of the 1990 edition [MR1048347].

\bibitem{bownik-cruz-uribe}
M.~Bownik and D.~Cruz-Uribe.
\newblock Extrapolation and factorization of matrix weights.
\newblock {\em preprint}, 2022.
\newblock arXiv:2210.09443.

\bibitem{MR890743}
H.~O. Cordes.
\newblock {\em Spectral theory of linear differential operators and comparison
  algebras}, volume~76 of {\em London Mathematical Society Lecture Note
  Series}.
\newblock Cambridge University Press, Cambridge, 1987.

\bibitem{preprint-DCU}
D.~Cruz-Uribe.
\newblock Extrapolation and factorization.
\newblock In J.~Lukes and L.~Pick, editors, {\em Function spaces, embeddings
  and extrapolation X, Paseky 2017}, pages 45--92. Matfyzpress, Charles
  University, 2017.
\newblock arXiv:1706.02620.

\bibitem{cruzuribe2024}
D.~Cruz-Uribe.
\newblock Matrix weights, singular integrals, {J}ones factorization and {R}ubio
  de {F}rancia extrapolation.
\newblock {\em preprint}, 2024.
\newblock arXiv:2304.03887.

\bibitem{MR3026953}
D.~Cruz-Uribe and A.~Fiorenza.
\newblock {\em Variable {L}ebesgue spaces}.
\newblock Applied and Numerical Harmonic Analysis. Birkh\"auser/Springer,
  Heidelberg, 2013.
\newblock Foundations and harmonic analysis.

\bibitem{MR3803292}
D.~Cruz-Uribe, J.~Isralowitz, and K.~Moen.
\newblock Two weight bump conditions for matrix weights.
\newblock {\em Integral Equations Operator Theory}, 90(3):Art. 36, 31, 2018.

\bibitem{DCU-JI-KM-SP-IRR}
D.~Cruz-Uribe, J.~Isralowitz, K.~Moen, S.~Potts, and I.~Rivera-R\'{\i}os.
\newblock Weak endpoint bounds for matrix weights.
\newblock {\em Rev. Mat. Iberoamericana}, to appear.
\newblock arXiv:1905.06436.

\bibitem{cruz-uribe-martell-perez04}
D.~Cruz-Uribe, J.~M. Martell, and C.~P{\'e}rez.
\newblock Extrapolation from {$A\sb \infty$} weights and applications.
\newblock {\em J. Funct. Anal.}, 213(2):412--439, 2004.

\bibitem{MR2797562}
D.~Cruz-Uribe, J.~M. Martell, and C.~P{\'e}rez.
\newblock {\em Weights, extrapolation and the theory of {R}ubio de {F}rancia},
  volume 215 of {\em Operator Theory: Advances and Applications}.
\newblock Birkh\"auser/Springer Basel AG, Basel, 2011.

\bibitem{cruz-uribe-neugebauer95}
D.~Cruz-Uribe and C.~J. Neugebauer.
\newblock The structure of the reverse {H}\"older classes.
\newblock {\em Trans. Amer. Math. Soc.}, 347(8):2941--2960, 1995.

\bibitem{DCU-MP-2024}
D.~Cruz-Uribe and M.~Penrod.
\newblock The reverse {H}\"older inequality for {$\mathcal{A}_{p(\cdot)}$}
  weights with applications to matrix weights.
\newblock {\em preprint}, 2024.
\newblock arXiv:2411.12849.

\bibitem{DCU-BS-2023}
D.~Cruz-Uribe and B.~Sweeting.
\newblock Weighted weak-type inequalities for maximal operators and singular
  integrals.
\newblock {\em preprint}, 2023.
\newblock arXiv2311.00828.

\bibitem{MR0457661}
M.~de~Guzm{\'a}n.
\newblock {\em Differentiation of {I}ntegrals in {$R^{n}$}}.
\newblock Lecture Notes in Mathematics, Vol. 481. Springer-Verlag, Berlin,
  1975.

\bibitem{domelevo2024boundedness}
K.~Domelevo, S.~Kakaroumpas, S.~Petermichl, and O.~Soler~i Gibert.
\newblock Boundedness of {J}ourn{\'e} operators with matrix weights.
\newblock {\em Journal of Mathematical Analysis and Applications},
  532(2):127956, 2024.

\bibitem{duoandikoetxea01}
J.~Duoandikoetxea.
\newblock {\em Fourier analysis}, volume~29 of {\em Graduate Studies in
  Mathematics}.
\newblock American Mathematical Society, Providence, RI, 2001.

\bibitem{MR3473651}
J.~Duoandikoetxea, F.~J. Mart\'in-Reyes, and S.~Ombrosi.
\newblock On the {$A_\infty$} conditions for general bases.
\newblock {\em Math. Z.}, 282(3-4):955--972, 2016.

\bibitem{MR864371}
R.~Fefferman.
\newblock Multiparameter {F}ourier analysis.
\newblock In {\em Beijing lectures in harmonic analysis ({B}eijing, 1984)},
  volume 112 of {\em Ann. of Math. Stud.}, pages 47--130. Princeton Univ.
  Press, Princeton, NJ, 1986.

\bibitem{MR1439553}
R.~Fefferman and J.~Pipher.
\newblock Multiparameter operators and sharp weighted inequalities.
\newblock {\em Amer. J. Math.}, 119(2):337--369, 1997.

\bibitem{garcia-cuerva-rubiodefrancia85}
J.~Garc{\'{\i}}a-Cuerva and J.~L. Rubio~de Francia.
\newblock {\em Weighted norm inequalities and related topics}, volume 116 of
  {\em North-Holland Mathematics Studies}.
\newblock North-Holland Publishing Co., Amsterdam, 1985.

\bibitem{MR2015733}
M.~Goldberg.
\newblock Matrix {$A_p$} weights via maximal functions.
\newblock {\em Pacific J. Math.}, 211(2):201--220, 2003.

\bibitem{harboure-macias-segovia88}
E.~Harboure, R.~A. Mac{\'{\i}}as, and C.~Segovia.
\newblock Extrapolation results for classes of weights.
\newblock {\em Amer. J. Math.}, 110(3):383--397, 1988.

\bibitem{MR4030471}
J.~Isralowitz and K.~Moen.
\newblock Matrix weighted {P}oincar\'{e} inequalities and applications to
  degenerate elliptic systems.
\newblock {\em Indiana Univ. Math. J.}, 68(5):1327--1377, 2019.

\bibitem{jawerth86}
B.~Jawerth.
\newblock Weighted inequalities for maximal operators: linearization,
  localization and factorization.
\newblock {\em Amer. J. Math.}, 108(2):361--414, 1986.

\bibitem{KNV}
S.~Kakaroumpas, T.~H. Nguyen, and D.~Vardakis.
\newblock Matrix-weighted estimates beyond {C}alder\'on-{Z}ygmund theory.
\newblock {\em preprint}, 2024.
\newblock arXiv:2404.02246.

\bibitem{MR2652182}
M.~T. Lacey, K.~Moen, C.~P\'erez, and R.~Torres.
\newblock Sharp weighted bounds for fractional integral operators.
\newblock {\em J. Funct. Anal.}, 259(5):1073--1097, 2010.

\bibitem{muckenhoupt72}
B.~Muckenhoupt.
\newblock Weighted norm inequalities for the {H}ardy maximal function.
\newblock {\em Trans. Amer. Math. Soc.}, 165:207--226, 1972.

\bibitem{muckenhoupt-wheeden74}
B.~Muckenhoupt and R.~L. Wheeden.
\newblock Weighted norm inequalities for fractional integrals.
\newblock {\em Trans. Amer. Math. Soc.}, 192:261--274, 1974.

\bibitem{MR1428988}
F.~Nazarov and S.~Treil.
\newblock The hunt for a {B}ellman function: applications to estimates for
  singular integral operators and to other classical problems of harmonic
  analysis.
\newblock {\em Algebra i Analiz}, 8(5):32--162, 1996.

\bibitem{perez91}
C.~P{\'e}rez.
\newblock Weighted norm inequalities for general maximal operators.
\newblock {\em Publ. Mat.}, 35(1):169--186, 1991.

\bibitem{MR1350650}
A.~Ron and Z.~Shen.
\newblock Frames and stable bases for shift-invariant subspaces of
  {$L_2(\mathbf R^d)$}.
\newblock {\em Canad. J. Math.}, 47(5):1051--1094, 1995.

\bibitem{MR1928089}
S.~Roudenko.
\newblock Matrix-weighted {B}esov spaces.
\newblock {\em Trans. Amer. Math. Soc.}, 355(1):273--314 (electronic), 2003.

\bibitem{MR1428818}
S.~Treil and A.~Volberg.
\newblock Wavelets and the angle between past and future.
\newblock {\em J. Funct. Anal.}, 143(2):269--308, 1997.

\bibitem{Vol}
A.~Volberg.
\newblock Matrix {$A_p$} weights via {$S$}-functions.
\newblock {\em J. Amer. Math. Soc.}, 10(2):445--466, 1997.

\bibitem{VUORINEN2024109847}
E.~Vuorinen.
\newblock The strong matrix weighted maximal operator.
\newblock {\em Adv. Math.}, 453:109847, 2024.

\end{thebibliography}

\end{document}